\newtheorem{theorem}{Theorem}[section]
\newtheorem{lemma}{Lemma}[section]
\newtheorem{definition}{Definition}[section]
\newtheorem{remark}{Remark}[section]
\newtheorem{example}{Example}[section]
\newcommand{\beq}{\begin{equation}}
\newcommand{\eeq}{\end{equation}}
\newcommand{\beqn}{\begin{eqnarray}}
\newcommand{\eeqn}{\end{eqnarray}}
\begin{document}
\allowdisplaybreaks

\title{Asymptotical stability of almost periodic solution for an impulsive multispecies competition-predation system with time delays on time scales\thanks{This work is
supported by the National Natural Sciences Foundation of People's
Republic of China under Grant 11361072.}}
\author {Yongkun Li\thanks{%
The corresponding author.} and Pan Wang\\
Department of Mathematics, Yunnan University\\
Kunming, Yunnan 650091\\
 People's Republic of China}
\date{}
\maketitle{}
\begin{abstract}
In this paper, we consider the almost periodic dynamics of an impulsive multispecies Lotka-Volterra competition system with time delays on time scales.
By establishing some comparison theorems of dynamic equations with impulses and delays on time scales, a permanence result for the model is obtained. Furthermore,
based on the permanence result, by studying the Lyapunov stability theory of impulsive dynamic equations on time scales, we establish a criterion for the existence and uniformly asymptotic stability of a unique positive almost periodic solution of the system. Finally, we give an example to show the feasibility of our main results and our example also shows that the continuous time system and its corresponding discrete time system have the same dynamics. Our results of this paper are completely new even if for both  the case of the time scale $\mathbb{T}=\mathbb{R}$ and the case of $\mathbb{Z}$.
\end{abstract}
{\bf Key words:} Multispecies competition-predation system; Impulsive; Permanence; Almost periodic solution; Time scales.

\section{Introduction}

\setcounter{equation}{0}
{\setlength\arraycolsep{2pt}}
 \indent

In recent years, applications of the theory of differential equations in mathematical ecology have developed rapidly. Various mathematical models have been proposed in the study of population dynamics. One of the famous models for population dynamics is the Lotka-Volterra competition system. The traditional Lotka-Volterra competition system can be expressed as follows:
\begin{equation}\label{e11}
\dot{x}_i(t)=x_i(t)\bigg[r_i(t)-\sum\limits_{j=1}^na_{ij}(t)x_j(t)\bigg], i=1,2,\ldots,n,
\end{equation}
where $r_i(t)$ represents the intrinsic growth rate of species $i$ at time $t$ and $a_{ij}(t)$ the competing coefficients
between species $j$ and $i$.
Many excellent results which are concerned with permanence, extinction and global attractivity of periodic solutions or almost periodic solutions of system (1.1) are obtained (see \cite{1,2,3,4,5}). Moreover, in the real world, models with delays are much more realistic, as in reality time delays occur in almost every biological situation and assumed to be one of the reasons of regular fluctuations in population density. Many important and interesting population dynamical systems with delays have been extensively studied (see \cite{6,7,8,9,10,k1}). As is known that one of the most interesting questions in mathematical biology concerns the survival of species in ecological models. Biologically, when a system of interacting species is persistent in a suitable sense, it means that all the species survive in the long term. Since permanence is one of the most important topics on the study of population dynamics, it is reasonable to ask for conditions under which the system is permanent. For example, in \cite{7}, Chen discussed the permanence and global
stability of the non-autonomous Lotka-Volterra system with
predator-prey and delays by using a comparison theorem
and constructing a suitable Lyapunov function:
{\setlength\arraycolsep{2pt}
\begin{eqnarray}\label{e12}
\left\{
\begin{array}{ll}
\dot{x_i}(t)=x_i(t)\bigg[b_i(t)-\sum\limits_{k=1}^n
a_{ik}(t)x_k(t-\tau_{ik}(t))-\sum\limits_{k=1}^m
c_{ik}(t)y_k(t-\sigma_{ik}(t))\bigg],\\
\dot{y_j}(t)=y_j(t)\bigg[-r_j(t)+\sum\limits_{k=1}^n
d_{jk}(t)x_k(t-\xi_{jk}(t))-\sum\limits_{k=1}^m
e_{jk}(t)y_k(t-\eta_{jk}(t))\bigg],\\
\quad\quad\quad \,\,i=1,2,\ldots,n, j=1,2,\ldots,m.
\end{array}
\right.
\end{eqnarray}}

On the other hand, many natural and man-made factors (e.g., fire, drought, flooding deforestation, hunting, harvesting,
breeding etc.) always lead to rapid decrease or increase of population numbers at fixed times. Such sudden changes can often
be characterized mathematically in the form of impulses. With the development of the theory of impulsive differential equations \cite{11,12}, nonautonomous $n$-species Lotka-Volterrra competitive systems with impulsive effects have been studied by many authors and many important and significant results are obtained (see \cite{13,14,15,16,p1,chen1}).
 However,  at present, since  few comparison theorems of solutions of differential or difference equations with both impulses and delays are available,
very few works  have been done for the permanence of population models with both impulses and delays, especially, for the models with impulses and discrete delays.
Besides, it is well known that both continuous time systems and discrete time systems are equally important in theory and applications. But, up to now, there are few papers published on the permanence of discrete time population models with impulses and delays. Also, it is well known that the study of dynamic equations on time scales has been created in order to unify the study
of differential and difference equations.

Motivated by the above reasons, in this paper, we are concerned with the following impulsive multispecies competition-predation system with time delays on time scales:
{\setlength\arraycolsep{2pt}
\begin{eqnarray}\label{e13}
\left\{
\begin{array}{ll}
x_i^\Delta(t)=b_i(t)-\sum\limits_{l=1}^na_{il}(t)\exp\{x_l(t-\tau_{il}(t))\}-\sum\limits_{h=1}^mc_{ih}(t)\exp\{y_h(t-\delta_{ih}(t))\},\,\,t\neq t_k,\,\,t\in J, \\
y_j^\Delta(t)=-r_j(t)+\sum\limits_{l=1}^nd_{jl}(t)\exp\{x_l(t-\xi_{jl}(t))\}\\
\quad\quad\quad\,\,\,\,-\sum\limits_{h=1}^me_{jh}(t)\exp\{y_h(t-\eta_{jh}(t))\},\,\,t\neq t_k,\,\,t\in J, \\
x_i(t_{k}^{+})=x_i(t_{k})+\ln(1+\lambda_{ik}),\,\,t=t_k,\\
y_j(t_{k}^{+})=y_j(t_{k})+\ln(1+\lambda_{jk}),\,\,t=t_k,\,\,i=1,2,\ldots,n,\,\,j=1,2,\ldots,m,
\end{array}
\right.
\end{eqnarray}}
where $J=[t_0, +\infty)_{\mathbb{T}}$ and $0\leq t_0\in\mathbb{T}$, $x_i(t)$ is the density of species $x_i$ at time $t$, $y_j(t)$ is the density of species $y_j$ at time $t$ and $\{t_k\}\subset\mathcal{B}:=\{\{t_k\}$, $t_k\in\mathbb{T}$: $t_k<t_{k+1}$, $k\in\mathbb{N}$, $\lim_{t\rightarrow\infty}t_{k}=\infty\}$.

\begin{remark}
Let $z_i(t)=\exp\{x_i(t)\}$, $w_j(t)=\exp\{y_j(t)\}$ for $i=1,2,\ldots,n$, $j=1,2,\ldots,m$. If $\mathbb{T}=\mathbb{R}$, then
$\eqref{e12}$ can be written as {\setlength\arraycolsep{2pt}
\begin{eqnarray}\label{e14}
\left\{
\begin{array}{ll}
\dot{z_i}(t)=z_i(t)\bigg[b_i(t)-\sum\limits_{l=1}^na_{il}(t)z_l(t-\tau_{il}(t))-\sum\limits_{h=1}^mc_{ih}(t)w_h(t-\delta_{ih}(t))\bigg],\,\,t\neq t_k,\\
\dot{w_j}(t)=w_j(t)\bigg[-r_j(t)+\sum\limits_{l=1}^nd_{jl}(t)z_l(t-\xi_{jl}(t))-\sum\limits_{h=1}^me_{jh}(t)w_h(t-\eta_{jh}(t))\bigg],\,\,t\neq t_k,\\
z_i(t_{k}^{+})=(1+\lambda_{ik})z_i(t_{k}),\,\,t=t_k,\\
w_j(t_{k}^{+})=(1+\lambda_{jk})w_j(t_{k}),\,\,t=t_k,\,\,i=1,2,\ldots,n,\,\,j=1,2,\ldots,m,
\end{array}
\right.
\end{eqnarray}}
where $t\in[t_0, +\infty)$.
If  $\mathbb{T}=\mathbb{Z}$, then $\eqref{e12}$ becomes
\begin{eqnarray}\label{e15}
\left\{
\begin{array}{ll}
z_i(t+1)=z_i(t)\exp\bigg\{b_i(t)-\sum\limits_{l=1}^na_{il}(t)z_l(t-\tau_{il}(t))-\sum\limits_{h=1}^mc_{ih}(t)w_h(t-\delta_{ih}(t))\bigg\},\,\,t\neq t_k,\\
w_j(t+1)=w_j(t)\exp\bigg\{-r_j(t)+\sum\limits_{l=1}^nd_{jl}(t)z_l(t-\xi_{jl}(t))-\sum\limits_{h=1}^me_{jh}(t)w_h(t-\eta_{jh}(t))\bigg\},\,\,t\neq t_k,\\
z_i(t_{k}^{+})=(1+\lambda_{ik})z_i(t_{k}),\,\,t=t_k,\\
w_j(t_{k}^{+})=(1+\lambda_{jk})w_j(t_{k}),\,\,t=t_k,\,\,i=1,2,\ldots,n,\,\,j=1,2,\ldots,m,
\end{array}
\right.
\end{eqnarray}
where $t\in[t_0, +\infty)_{\mathbb{Z}}$.
\end{remark}

From the point of view of biology, we focus our discussion on the positive solutions of system $\eqref{e13}$. So it is assumed that the initial conditions of system $\eqref{e13}$ are of the form
{\setlength\arraycolsep{2pt}
\begin{eqnarray*}
 \left\{%
\begin{array}{lcrcl}
x_{i}(\theta; t_{0}, \phi_{0}^{i})=\phi_{0}^{i}(\theta)\geq 0,\,\, \phi_{0}^{i}(t_{0})>0,\,\,\theta\in[t_{0}-\hat{\tau},t_{0}]_{\mathbb{T}},\,\,i=1,2,\ldots,n,\\
y_{j}(\theta; t_{0}, \psi_{0}^{j})=\psi_{0}^{j}(\theta)\geq 0,\,\,\psi_{0}^{j}(t_{0})>0,\,\,\theta\in[t_{0}-\hat{\eta}, t_{0}]_{\mathbb{T}},\,\,j=1,2,\ldots,m,\\
\end{array}\right.
\end{eqnarray*}}
where $\phi_{0}^{i},\psi_{0}^{j}\in C([t_{0}-\hat{\tau},t_{0}]_{\mathbb{T}},[0,+\infty))$,
\[\hat{\tau}=\max\{\tau^{+}, \delta^{+}\},\,\hat{\eta}=\max\{\xi^{+}, \eta^{+}\},
\tau^{+}=\max\limits_{1\leq i,l\leq n}\sup\limits_{t\in\mathbb{T}}\{\tau_{il}(t)\},\, \tau^{-}=\min\limits_{1\leq i,l\leq n}\inf\limits_{t\in\mathbb{T}}\{\tau_{il}(t)\},\]
\[\delta^{+}=\max\limits_{1\leq i\leq n,1\leq h\leq m}\sup\limits_{t\in\mathbb{T}}\{\delta_{ih}(t)\},\,\delta^{-}=\min\limits_{1\leq i\leq n,1\leq h\leq m}\inf\limits_{t\in\mathbb{T}}\{\delta_{ih}(t)\},\,
\xi^{+}=\max\limits_{1\leq j\leq m,1\leq l\leq n}\sup\limits_{t\in\mathbb{T}}\{\xi_{jl}(t)\},\]
\[ \xi^{-}=\min\limits_{1\leq j\leq m,1\leq l\leq n}\inf\limits_{t\in\mathbb{T}}\{\xi_{jl}(t)\},\,
\eta^{+}=\max\limits_{1\leq j,h\leq m}\sup\limits_{t\in\mathbb{T}}\{\eta_{jh}(t)\},\,\eta^{-}=\min\limits_{1\leq j,h\leq m}\inf\limits_{t\in\mathbb{T}}\{\eta_{jh}(t)\}.\]

For convenience, we denote
$f^L=\inf\limits_{t\in{\mathbb{T}}}|f(t)|, f^U=\sup\limits_{t\in{\mathbb{T}}}|f(t)|,$
where $f$ is an almost periodic function on $\mathbb{T}$ and $\bar{\mu}=\sup\limits_{\theta\in \mathbb{T}}\{\mu(\theta)\}$, where $\mu(t)$ is the forword graininess  of $\mathbb{T}$.

Throughout this paper, we assume that
\begin{itemize}
\item[$(H_{1})$] $b_i(t)$, $a_{il}(t)$, $c_{ih}(t)$, $\tau_{il}(t)$, $\delta_{ih}(t)$, $r_j(t)$, $d_{jl}(t)$, $e_{jh}(t)$, $\xi_{jl}(t)$, $\eta_{jh}(t)$ are nonnegative almost periodic functions for $t\in{\mathbb{T}}$, $i,l=1,2,\ldots,n$, $j, h=1,2,\ldots,m$;
 \item [$(H_{2})$]
  $\{\lambda_{ik}\}$ and $\{\lambda_{jk}\}$ are almost periodic sequences,
 $0<r\leq\max\bigg\{\prod\limits_{t_{0}<t_{k}<t}(1+\lambda_{ik}), \prod\limits_{t_{0}<t_{k}<t}(1+\lambda_{jk})\bigg\}\leq 1$
   for $t\geq t_{0}$ and $-1< \{\lambda_{ik},\lambda_{jk}\}\leq0$ for $k\in\mathbb{N}$, $i=1,2,\ldots,n$, $j=1,2,\ldots,m$;
  \item [$(H_{3})$]
  the set of sequences $\{t_{k}\}\in UAPS$, where $UAPS=\{\{t_{k}^{j}\}$, $t_{k}^{j}=t_{k+j}-t_{k}$, $k, j\in\mathbb{N}$ is uniformly almost periodic and $\inf_{k}t_{k}^{1}=\theta>0\}\subset\mathcal{B}$.
\end{itemize}

The main purpose of this paper is to discuss the permanence of system \eqref{e13} by establishing  some comparison theorems of dynamic equations with impulses and delays on time scales
and based on the obtained permanence result, by studying the Lyapunov stability theory of impulsive dynamic equations on time scales, we establish the existence and uniformly asymptotic stability of a unique positive almost periodic solution of system \eqref{e13}.

\begin{remark}
To our knowledge, there have no studies been reported on the
the permanence and almost periodicity of system \eqref{e14} and system \eqref{e15}  until now.
\end{remark}
The organization of this paper is as follows: In Section 2, we introduce some notations and definitions, state some preliminary results. In Section 3, we   establish some comparison theorems of dynamic equations with impulses and delays which are needed
in later sections. In Section 4, we obtain some sufficient conditions for the permanence of \eqref{e13} by using the comparison theorems obtained in Section 3. In Section 5, by studying the Lyapunov stability theory of impulsive dynamic equations on time scales we establish some sufficient conditions for the existence and uniformly asymptotic stability of unique positive almost periodic solution of \eqref{e13}. In Section 6, we give an example to illustrate the feasibility and effectiveness of our results obtained in previous sections. Finally, we draw a conclusion in Section 7.

\section{Preliminaries}

\setcounter{equation}{0}
{\setlength\arraycolsep{2pt}}
 \indent

In this section, we shall introduce some basic definitions, lemmas which are used in what follows.

A time scale $\mathbb{T}$ is an arbitrary nonempty closed subset of the real numbers, the forward and backward jump operators $\sigma$, $\rho:\mathbb{T}\rightarrow \mathbb{T}$ and the forward graininess $\mu:\mathbb{T}\rightarrow \mathbb{R}^{+}$ are defined, respectively, by
\[
\sigma(t):=\inf \{s\in\mathbb{T}:s> t\},\,\,\rho(t):=\sup\{s\in\mathbb{T}:s<t\}\,\,
\text{and}\,\,\mu(t)=\sigma(t)-t.
\]

A point $t$ is said to be left-dense if $t>\inf\mathbb{T}$ and $\rho(t)=t$, right-dense if $t<\sup\mathbb{T}$ and $\sigma(t)=t$, left-scattered if $\rho(t)<t$ and right-scattered if $\sigma(t)>t$. If $\mathbb{T}$ has a left-scattered maximum $m$, then $\mathbb{T}^{k}=\mathbb{T}\backslash m$, otherwise $\mathbb{T}^{k}=\mathbb{T}$. If $\mathbb{T}$ has a right-scattered minimum $m$, then $\mathbb{T}_{k}=\mathbb{T}\backslash m$, otherwise $\mathbb{T}^{k}=\mathbb{T}$.

A function $f : \mathbb{T}\rightarrow \mathbb{R}$ is right-dense continuous or rd-continuous provided it is continuous at right-dense points in $\mathbb{T}$ and its left-sided limits exist (finite) at left-dense points in $\mathbb{T}$. If $f$ is continuous at each right-dense point and each left-dense point, then $f$ is said to be a continuous function on $\mathbb{T}$.

For $f:\mathbb{T}\rightarrow\mathbb{R}$ and $t\in{\mathbb{T}^{k}}$, then $f$ is called delta differentiable at $t\in{\mathbb{T}}$ if there exists $c\in\mathbb{R}$ such that for given any $\varepsilon\geq{0}$, there is an open neighborhood $U$ of  $t$ satisfying
\[
\left|[f(\sigma(t))-f(s)]-c[\sigma(t)-s]\right|\leq\varepsilon\left|\sigma(t)-s\right|
\]
for all $s\in U$. In this case, $c$ is called the delta derivative of $f$ at $t\in{\mathbb{T}}$, and is denoted by $c=f^{\Delta}(t)$. For $\mathbb{T}=\mathbb{R}$, we have $f^{\Delta}=f^{'}$, the usual derivative, and for $\mathbb{T}=\mathbb{Z}$ we have the backward difference operator, $f^{\Delta}(t)=\Delta f(t):=f(t+1)-f(t)$.

A function $p:\mathbb{T}\rightarrow\mathbb{R}$ is called regressive provided $1+\mu(t)p(t)\neq 0$ for all $t\in{\mathbb{T}^{k}}$. The
set of all regressive and rd-continuous functions $p:\mathbb{T}\rightarrow\mathbb{R}$ will be denoted by $\mathcal{R}=\mathcal{R}(\mathbb{T})=\mathcal{R}(\mathbb{T}, \mathbb{R})$. We define the set $\mathcal{R}^{+}=\mathcal{R}^{+}(\mathbb{T}, \mathbb{R})=\{p\in\mathcal{R}: 1+\mu(t)p(t)> 0, \forall t\in{\mathbb{T}}\}$.

If $r\in\mathcal{R}$, then the generalized exponential function $e_{r}$ is defined by
\begin{eqnarray*}
e_{r}(t, s)=\exp\bigg\{\int_s^t\xi_{\mu(\tau)}(r(\tau))\Delta\tau\bigg\}
\end{eqnarray*}
for all $s,t\in\mathbb{T}$, with the cylinder transformation
\begin{eqnarray*}
\xi_h(z)=\bigg\{\begin{array}{ll} {\displaystyle\frac{\mathrm{Log}(1+hz)}{h}},\,\,h\neq 0,\\
z,\,\,\,\,\,\,\,\quad\quad\quad\quad h=0.\\
\end{array}
\end{eqnarray*}

Let $p,q:\mathbb{T}\rightarrow\mathbb{R}$ be two regressive functions, we define
\[
p\oplus q=p+q+\mu pq,\,\,\,\,\ominus p=-\frac{p}{1+\mu p},\,\,\,\,p\ominus q=p\oplus(\ominus q)=\frac{p-q}{1+\mu q}.
\]
Then the generalized exponential function has the following
properties.

\begin{lemma}\cite{17}
Assume that $p,q:\mathbb{T}\rightarrow\mathbb{R}$ are two regressive
functions, then
\begin{itemize}
    \item  [$(i)$]  $e_{0}(t,s)\equiv 1$ $\mathrm{and}$ $e_p(t,t)\equiv 1$;
    \item  [$(ii)$] $e_p(\sigma(t),s)=(1+\mu(t)p(t))e_p(t,s)$;
    \item  [$(iii)$]$e_p(t,s)=1/e_p(s,t)=e_{\ominus p}(s,t)$;
    \item  [$(iv)$]  $e_p(t,s)e_p(s,r)=e_p(t,r)$;
    \item  [$(v)$] $e_p(t,s)e_q(t,s)=e_{p\oplus q}(t,s)$;
    \item  [$(vi)$] $e_p(t,s)/e_q(t,s)=e_{p\ominus q}(t,s)$;
    \item  [$(vi)$] $\big(\frac{1}{e_p(t,s)}\big)^{\Delta}=\frac{-p(t)}{e^{\sigma}_p(t,s)}$.
\end{itemize}
\end{lemma}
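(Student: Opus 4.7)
The plan is to verify each of (i)--(vii) directly from the definition
$e_p(t,s)=\exp\!\left\{\int_s^t \xi_{\mu(\tau)}(p(\tau))\,\Delta\tau\right\}$
and from one algebraic identity for the cylinder transformation, namely, when $h\neq 0$,
$$\xi_h(p\oplus q)=\xi_h(p)+\xi_h(q),\qquad \xi_h(\ominus p)=-\xi_h(p),\qquad \xi_h(p\ominus q)=\xi_h(p)-\xi_h(q).$$
All three follow from $1+h(p\oplus q)=(1+hp)(1+hq)$ and the multiplicative-to-additive rule for $\mathrm{Log}$; they extend to the $h=0$ case by continuity or by inspection.

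With that tool in hand, (i) and (iv) are immediate: $\xi_\mu(0)\equiv 0$ gives $e_0\equiv 1$, the vanishing of the integral over $[t,t]$ gives $e_p(t,t)=1$, and (iv) is just additivity of the $\Delta$-integral combined with $\exp(a)\exp(b)=\exp(a+b)$. For (ii) I would split $\int_s^{\sigma(t)}=\int_s^t+\int_t^{\sigma(t)}$ and use the basic time-scale identity $\int_t^{\sigma(t)} f(\tau)\Delta\tau=\mu(t)f(t)$; this yields $\int_t^{\sigma(t)}\xi_\mu(p)\Delta\tau=\mu(t)\xi_{\mu(t)}(p(t))=\mathrm{Log}(1+\mu(t)p(t))$ (which is valid in both the right-scattered and right-dense cases), so exponentiating produces the factor $1+\mu(t)p(t)$.

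Property (iii) splits into two halves: $e_p(t,s)e_p(s,t)=\exp\{\int_s^t+\int_t^s\}=\exp(0)=1$ gives the first equality, and applying the identity $\xi_\mu(\ominus p)=-\xi_\mu(p)$ converts $\int_t^s\xi_\mu(p)\Delta\tau$ into $\int_s^t\xi_\mu(\ominus p)\Delta\tau$, yielding $1/e_p(t,s)=e_{\ominus p}(s,t)$. Properties (v) and (vi) are the exponentiated forms of $\xi_\mu(p\oplus q)=\xi_\mu(p)+\xi_\mu(q)$ and $\xi_\mu(p\ominus q)=\xi_\mu(p)-\xi_\mu(q)$, integrated from $s$ to $t$. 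For (vii) I would combine (iii) and the chain rule for the time-scale exponential: $1/e_p(t,s)=e_{\ominus p}(t,s)$, whose $\Delta$-derivative in $t$ equals $(\ominus p)(t)\,e_{\ominus p}(t,s)=-\tfrac{p(t)}{1+\mu(t)p(t)}\cdot\tfrac{1}{e_p(t,s)}$; rewriting the denominator using (ii) as $e_p^\sigma(t,s)=(1+\mu(t)p(t))e_p(t,s)$ collapses this to $-p(t)/e_p^\sigma(t,s)$.

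The only genuine subtlety is that the computation in (vii) tacitly uses the fact that $e_p(\cdot,s)$ is itself $\Delta$-differentiable with $e_p^\Delta(t,s)=p(t)e_p(t,s)$; this is normally established as a separate preliminary lemma showing that $e_p(\cdot,s)$ is the unique solution of the IVP $y^\Delta=p(t)y$, $y(s)=1$, and I would cite it rather than reprove it. Apart from this, the entire argument is bookkeeping with $\xi_\mu$ and the fundamental time-scale integral identities, and it unifies the classical cases $\mathbb{T}=\mathbb{R}$ (where $\xi_\mu$ reduces to the identity and $e_p$ to the usual exponential) and $\mathbb{T}=\mathbb{Z}$ (where $e_p(t,s)=\prod_{\tau=s}^{t-1}(1+p(\tau))$) in one stroke.
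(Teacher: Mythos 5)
The paper does not prove this lemma at all --- it is quoted verbatim from the cited reference [17] (Bohner--Peterson), so there is no in-paper argument to compare against. Your verification directly from the definition $e_p(t,s)=\exp\{\int_s^t\xi_{\mu(\tau)}(p(\tau))\Delta\tau\}$, using the additivity of the cylinder transformation under $\oplus$, $\ominus$, the splitting $\int_s^{\sigma(t)}=\int_s^t+\int_t^{\sigma(t)}$ with $\int_t^{\sigma(t)}f\,\Delta\tau=\mu(t)f(t)$, and the IVP characterization $e_p^\Delta(\cdot,s)=p\,e_p(\cdot,s)$ for item $(vii)$, is correct and is exactly the standard textbook derivation; you rightly flag the last ingredient as the one fact that must be established separately. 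The only point worth stating explicitly if you wrote this out in full is that when $1+\mu p<0$ the principal $\mathrm{Log}$ is only additive up to a multiple of $2\pi i$, so in $(v)$ and $(vi)$ the discrepancy $2\pi ik/\mu(\tau)$ must be seen to contribute $2\pi ik$ after multiplication by $\mu(\tau)$ in the $\Delta$-integral and hence to vanish under the outer exponential --- a routine but nontrivial check.
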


\begin{definition}\cite{22} A time scale $\mathbb{T}$ is called an almost periodic time scale if
\begin{eqnarray*}
\Pi=\big\{\tau\in\mathbb{R}: t\pm\tau\in\mathbb{T}, \forall t\in{\mathbb{T}}\big\}\neq\{0\}.
\end{eqnarray*}
\end{definition}
\begin{definition}\label{defli2}
Let $\mathbb{T}$ be an  almost periodic
time scale.  A function  $f\in
C(\mathbb{T}\times D,\mathbb{E}^n)$ is  called an almost
periodic function in $t\in \mathbb{T}$ uniformly for $x\in D$ if the
$\varepsilon$-translation set of $f$
$$E\{\varepsilon,f,S\}=\{\tau\in\Pi:|f(t+\tau,x)-f(t,x)|<\varepsilon,\,\,
\forall (t,x)\in   \mathbb{T}\times S\}$$ is relatively
dense for all $\varepsilon>0$ and for each
compact subset $S$ of $D$, that is, for any given $\varepsilon>0$
and each compact subset $S$ of $D$, there exists a constant
$l(\varepsilon,S)>0$ such that each interval of length
$l(\varepsilon,S)$ contains a $\tau(\varepsilon,S)\in
E\{\varepsilon,f,S\}$ such that
\begin{equation*}
|f(t+\tau,x)-f(t,x)|<\varepsilon, \,\,\forall t\in
\mathbb{T}\times S.
\end{equation*}
This $\tau$ is called the $\varepsilon$-translation number of $f$.
\end{definition}

For convenience, $PC_{rd}(\mathbb{T}, \mathbb{R}^{n})$ denotes the set of all piecewise continuous functions with respect to a sequence $\{t_{k}\}$, $k\in\mathbb{Z}$. For any integers $i$ and $j$, denote $t_{k}^{j}=t_{k+j}-t_{k}$ and consider the sequence $\{t_{k}^{j}\}$, $k, j\in\mathbb{Z}$. It is easy to verify that the number $t_{k}^{j}$, $k, j\in\mathbb{Z}$ satisfy
\begin{eqnarray*}
t_{k+i}^{j}-t_{k}^{j}=t_{k+j}^{i}-t_{k}^{i},\,\,t_{k}^{j}-t_{k}^{i}=t_{k+i}^{j-i},\,\,k, j,i\in\mathbb{Z}.
\end{eqnarray*}

\begin{definition}\cite{25}
The set of sequences $\{t_{k}^{j}\}$, $t_{k}^{j}=t_{k+j}-t_{k}$, $k, j\in\mathbb{Z}$ is said to be uniformly almost periodic, if for an arbitrary $\varepsilon>0$, there exists a relatively dense set of $\varepsilon$-almost periods, common for all sequences $\{t_{k}^{j}\}$.
\end{definition}

\begin{definition}\label{def24}
Let $\mathbb{T}$ be an almost periodic time scale and assume that $\{t_{k}\}\subset\mathbb{T}$, we call a function $\varphi\in PC_{rd}(\mathbb{T}, \mathbb{R}^{n})$ is almost periodic, if the following holds:
\begin{itemize}
     \item  [$(i)$] $\{t_{k}^{j}\}$, $t_{k}^{j}=t_{k+j}-t_{k}$, $k, j\in\mathbb{Z}$ is uniformly almost periodic;
     \item  [$(ii)$] for any $\varepsilon>0$, there is a positive number $\delta=\delta(\varepsilon)$ such that if the points $t^{'}$ and $t^{''}$ belong to the same interval of continuity and $|t^{'}-t^{''}|<\delta$, then $\|\varphi(t^{'})-\varphi(t^{''})\|<\varepsilon$;
    \item  [$(iii)$] for any $\varepsilon>0$, there is relative dense set $\Gamma_{\varepsilon}\subset\Pi$ of $\varepsilon$-almost periods such that, if $\tau\in\Gamma_{\varepsilon}$, then $\|\varphi(t+\tau)-\varphi(t)\|<\varepsilon$ for all $t\in\mathbb{T}$ satisfying the condition $|t-t_{k}|>\varepsilon$, $k\in\mathbb{Z}$.
\end{itemize}
\end{definition}

We give a concept of almost periodic functions in the sense of  Bohr as follows.
\begin{definition}\label{def25}
Let $\mathbb{T}$ be an almost periodic time scale and assume that $\{t_{k}\}\subset\mathbb{T}$, we call a function $\varphi\in PC_{rd}(\mathbb{T}\times D, \mathbb{R}^{n})$ is almost periodic in t uniformly for $x\in D$, if, for each compact subset $S$ of $D$, the following holds:
\begin{itemize}
     \item  [$(i)$] $\{t_{k}^{j}\}$, $t_{k}^{j}=t_{k+j}-t_{k}$, $k, j\in\mathbb{Z}$ is uniformly almost periodic;
     \item  [$(ii)$] for any $\varepsilon>0$, there is a positive number $\delta=\delta(\varepsilon, S)$ such that if the points $t^{'}$ and $t^{''}$ belong to the same interval of continuity and $|t^{'}-t^{''}|<\delta$, then $\|\varphi(t^{'})-\varphi(t^{''})\|<\varepsilon$;
    \item  [$(iii)$] for any $\varepsilon>0$, there is relative dense set $\Gamma_{\varepsilon}\subset\Pi$ of $\varepsilon$-almost periods such that, if $\tau\in\Gamma_{\varepsilon}$, then $\|\varphi(t+\tau)-\varphi(t)\|<\varepsilon$ for all $(t,x)\in\mathbb{T}\times S$ satisfying the condition $|t-t_{k}|>\varepsilon$, $k\in\mathbb{Z}$.
\end{itemize}
\end{definition}

Let $T,P\in\mathcal{B}$, and let $s(T\cup P):\mathcal{B}\rightarrow\mathcal{B}$ be a map such that the set $s(T\cup P)$ forms a strictly increasing sequence. For $D\subset\mathbb{T}$ and $\varepsilon>0$, $F_{\varepsilon}(D)$ is a closed $\varepsilon$-neighborhood of the set $D$.

\begin{definition}\label{def26}
The set $T\in\mathcal{B}$ is almost periodic, if for every sequence $\{s_m^{'}\}\subset\Pi$ there exists a subsequence $\{s_n\}$, $s_n=s_{m_{n}}^{'}$ such that $T-s_n=\{t_k-s_n\}$ is uniformly convergent for $n\rightarrow\infty$ to the set $T_{1}\in\mathcal{B}$.
\end{definition}

Similar to the proof of Theorem 1 in \cite{139}, one can easily show the following lemma:
\begin{lemma}\label{lemd11} Let $\mathbb{T}$ be an almost periodic time scale and   $\{t_{i}\}\subset\mathbb{T}$. The set of sequences $\{t_k^j\}, t_k^j=t_{k+j}-t_k, k,j\in \mathbb{Z}$ is uniformly almost periodic if and only if for every sequence $\{s_m^{'}\}\subset\Pi$ there exists a subsequence $\{s_n\}$, $s_n=s_{m_{n}}^{'}$ such that $T-s_n=\{t_k-s_n\}$ is uniformly convergent for $n\rightarrow\infty$ on $\mathcal{B}$.
\end{lemma}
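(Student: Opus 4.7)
My plan is to mimic the classical Bochner-type normality characterization of uniform almost periodicity, transferring it from the real-line setting to sequences indexed by $\mathbb{Z}$ and translated by elements of the period module $\Pi$. The equivalence splits into two implications, which I would prove separately.

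For the forward implication, assume the family $\{t_k^j\}$ is uniformly almost periodic, and fix an arbitrary $\{s_m^{'}\}\subset\Pi$. I would pick a vanishing sequence of tolerances $\varepsilon_n=1/n$ and, for each $n$, invoke uniform almost periodicity to obtain a relatively dense set $\Gamma_{\varepsilon_n}\subset\Pi$ of common $\varepsilon_n$-almost periods with gaps bounded by some $l_n>0$. For each $s_m^{'}$ I can choose $\tau_m^{(n)}\in\Gamma_{\varepsilon_n}$ with $|s_m^{'}-\tau_m^{(n)}|\leq l_n$, so that $T-s_m^{'}$ differs from $T-\tau_m^{(n)}$ by a bounded translation, while $T-\tau_m^{(n)}$ is $\varepsilon_n$-close to $T$ in the uniform-on-$\mathcal{B}$ sense. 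A Cantor-style diagonal extraction then produces a subsequence $\{s_n\}=\{s_{m_n}^{'}\}$ along which, for every fixed $n$, the shifts $T-s_n$ are pairwise $\varepsilon_n$-close after the strict-increasing rearrangement $s(T\cup(T-s_n))$. Hence $\{T-s_n\}$ is uniformly Cauchy on $\mathcal{B}$, and so uniformly convergent.

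For the backward implication, I would argue by contradiction: assume the normality condition holds but the family $\{t_k^j\}$ is not uniformly almost periodic. Then there exists $\varepsilon_0>0$ admitting no relatively dense subset of $\Pi$ consisting entirely of common $\varepsilon_0$-almost periods. I can then inductively pick $\{s_m^{'}\}\subset\Pi$ whose consecutive gaps grow without bound and whose pairwise differences $s_m^{'}-s_{m'}^{'}$ all fail to be common $\varepsilon_0$-almost periods of $\{t_k^j\}$. By hypothesis some subsequence $\{s_n\}=\{s_{m_n}^{'}\}$ makes $T-s_n$ uniformly convergent, hence uniformly Cauchy, on $\mathcal{B}$; translating this uniform Cauchy condition back to the level of the double-indexed quantities $t_k^j$ forces infinitely many pairwise differences $s_n-s_m$ to be common $\varepsilon_0$-almost periods, contradicting the construction.

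The main obstacle will be formalising the metric structure on $\mathcal{B}$ under which the uniform convergence is being asserted, and making precise the dictionary: $T-s_n$ is uniformly Cauchy on $\mathcal{B}$ if and only if, after the strict-increasing rearrangement $s(T\cup(T-s_n)\cup(T-s_m))$, the two translated sequences agree up to an arbitrarily small uniform error in $k$, which is in turn equivalent to $s_n-s_m$ being an arbitrarily small common almost period of $\{t_k^j\}$. Once this dictionary is established, the two implications reduce to the standard almost-periodicity arguments used in the proof of Theorem 1 of the cited reference \cite{139}.
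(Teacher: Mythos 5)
The paper itself offers no proof of this lemma --- it only points to Theorem 1 of \cite{139} --- and your Bochner-type normality argument is exactly the argument of that reference, so in approach you are aligned with what the paper intends. The dictionary you isolate at the end is indeed the crux: $\rho(T-s_n,T-s_m)$ small means there is an integer $q$ with $\sup_k|t_k^q-(s_n-s_m)|$ small, and this controls $\sup_{k,j}|t_{k+q}^{j}-t_k^{j}|$ up to a factor of $2$; with that in place your backward implication (the gap-growing construction of $\{s_m'\}$ whose pairwise differences are never $\varepsilon_0$-almost periods) is the standard one and is sound.

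There is, however, one concrete hole in the forward implication as written. From the two facts you state --- that $T-s_m'$ differs from $T-\tau_m^{(n)}$ by a translation of size at most $l_n$, and that $T-\tau_m^{(n)}$ is $\varepsilon_n$-close to $T$ --- it does \emph{not} follow that any subsequence of $\{T-s_m'\}$ is Cauchy: the remainders $r_m^{(n)}=s_m'-\tau_m^{(n)}$ are merely bounded and could oscillate throughout $[-l_n,l_n]$, in which case the translates $T-s_m'$ never settle down. The missing step is precisely the compactness input of Bochner's argument: for each fixed $n$ pass to a further subsequence along which $r_m^{(n)}$ converges in the compact interval $[-l_n,l_n]$, and use translation invariance together with the estimate $\rho(T-r,T-r')\le|r-r'|$ (take the identity relabelling) to get $\rho(T-s_m',T-s_{m'}')\le 2\varepsilon_n+|r_m^{(n)}-r_{m'}^{(n)}|$; only then does the diagonal over $n$ yield a uniformly Cauchy subsequence. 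As written, your ``Cantor-style diagonal extraction'' has nothing to act on --- you should say explicitly that it is extracting convergence of the remainders. A minor further point for the backward direction: the $s_m'$ must be chosen in $\Pi$, so the arbitrarily long intervals free of common $\varepsilon_0$-almost periods must be checked to meet $\Pi$; this is automatic since $\Pi$ is a nontrivial subgroup of $\mathbb{R}$ for an almost periodic time scale and hence relatively dense.
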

\begin{definition}\label{def27}
The sequence $\{\phi_n\}$, $\phi_n=(\varphi_n(t), T_n)\in PC_{rd}(\mathbb{T}, \mathbb{R}^{n})\times\mathcal{B}$ is convergent to $\phi$, $\phi=(\varphi(t), T)\in PC_{rd}(\mathbb{T}, \mathbb{R}^{n})\times\mathcal{B}$, if and only if, for every $\varepsilon>0$ there exists $n_0>0$ such that $n\geq n_0$ implies
\begin{eqnarray*}
 \rho(T, T_n)<\varepsilon,\,\,\|\varphi_n(t)-\varphi(t)\|<\varepsilon
  \end{eqnarray*}
uniformly for $t\in\mathbb{T}\setminus F_{\varepsilon}(s(T_n\cup T))$, $\rho(\cdot, \cdot)$ is an arbitrary distance in $\mathcal{B}$.
\end{definition}

We give another concept of almost periodic functions as follows.
\begin{definition}\label{def28}
The function $\varphi\in PC_{rd}(\mathbb{T}, \mathbb{R}^{n})$ is said to be an almost
periodic piecewise continuous function with points of discontinuity of the first
kind from the set $T\in\mathcal{B}$, if:
\begin{itemize}
     \item  [$(i)$] for every $\varepsilon>0$, there is a positive number $\delta=\delta(\varepsilon)$ such that if the points $t^{'}$ and $t^{''}$ belong to the same interval of continuity and $|t^{'}-t^{''}|<\delta$, then $\|\varphi(t^{'})-\varphi(t^{''})\|<\varepsilon$;
     \item  [$(ii)$] for every sequence $\{s_m^{'}\}\subset\Pi$ there exists a subsequence $\{s_n\}\subset\{s_m^{'}\}$ such that $(\varphi(t+s_n), T-s_n)$ is uniformly convergent on $PC_{rd}(\mathbb{T}, \mathbb{R}^{n})\times\mathcal{B}$.
\end{itemize}
\end{definition}

\begin{remark}
In Definition \ref{def28}, if we remove Condition $(i)$, then  Definition \ref{def28} is  the definition of almost periodic functions in the sense of Bochner.
\end{remark}

\begin{definition}\label{def29}
The sequence $\{\phi_n\}$, $\phi_n=(\varphi_n(t,x), T_n)\in PC_{rd}(\mathbb{T}\times\Omega, \mathbb{R}^{n})\times\mathcal{B}$ is convergent to $\phi$, $\phi=(\varphi(t,x), T)\in PC_{rd}(\mathbb{T}\times\Omega, \mathbb{R}^{n})\times\mathcal{B}$, if and only if, for every $\varepsilon>0$ and for each compact subset $S$ of $\Omega$, there exists $n_0>0$ such that $n\geq n_0$ implies
\begin{eqnarray*}
 \rho(T, T_n)<\varepsilon,\,\,\|\varphi_n(t,x)-\varphi(t,x)\|<\varepsilon
  \end{eqnarray*}
uniformly for $(t,x)\in(\mathbb{T}\setminus F_{\varepsilon}(s(T_n\cup T)))\times S$, $\rho(\cdot, \cdot)$ is an arbitrary distance in $\mathcal{B}$.
\end{definition}

\begin{definition}\label{def210}
The function $\varphi\in PC_{rd}(\mathbb{T}\times D, \mathbb{R}^{n})$ is said to be almost periodic in $t$ uniformly for $x\in D$, if for each compact subset $S$ of $D$:
\begin{itemize}
     \item  [$(i)$]  for every $\varepsilon>0$,  there is a positive number $\delta=\delta(\varepsilon, S)$ such that if the points $t^{'}$ and $t^{''}$ belong to the same interval of continuity and $|t^{'}-t^{''}|<\delta$, then $\|\varphi(t^{'},\cdot)-\varphi(t^{''},\cdot)\|<\varepsilon$;
     \item  [$(ii)$] for every sequence $\{s_m^{'}\}\subset\Pi$ there exists a subsequence $\{s_n\}\subset\{s_m^{'}\}$ such that $(\varphi(t+s_n,x), T-s_n)$ is uniformly convergent on $PC_{rd}(\mathbb{T}\times D, \mathbb{R}^{n})\times\mathcal{B}$.
\end{itemize}
\end{definition}

\begin{remark}According to Lemma \ref{lemd11} and Theorem 129 in \cite{ws},
it is easy to see that Definition \ref{def24} is equivalent to Definition \ref{def28} and Definition \ref{def25} is equivalent to Definition \ref{def210}, respectively.
\end{remark}

Let $\varphi, \psi\in PC_{rd}(\mathbb{T}, \mathbb{R}^{n})$ with points of discontinuity of the first
kind from the set $T\in\mathcal{B}$, then similar to the proofs of Theorem 1.15, Theorem 1.17 and Theorem 1.18 in \cite{25}, one can easily show the following lemmas:
\begin{lemma}
If $\varphi$ is almost periodic, then $\varphi$ is bounded.
\end{lemma}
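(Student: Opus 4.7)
The plan is to argue by contradiction: assume $\varphi$ is unbounded on $\mathbb{T}$ and derive a contradiction from the existence of a relatively dense set of $\varepsilon$-translation numbers. By the remark preceding this lemma, Definition \ref{def28} is equivalent to Definition \ref{def24}, so I will work with the classical translation-number characterization of Definition \ref{def24}, combining the piecewise uniform-continuity condition (ii) with the $\varepsilon$-translation condition (iii). The argument adapts the standard proof for almost periodic piecewise continuous functions (cf.\ Theorem 1.15 in \cite{25}) to the time-scale setting.

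First, pick $\varepsilon > 0$ small enough that each $t \in \mathbb{T}$ can lie in the $\varepsilon$-neighborhood of at most one discontinuity point $t_k$; this is possible because the uniform almost periodicity of $\{t_k^j\}$ forces a positive infimum $\theta$ on the gaps $t_{k+1} - t_k$. Let $\Gamma_\varepsilon \subset \Pi$ be the relatively dense set of $\varepsilon$-translation numbers supplied by (iii), with inclusion length $l = l(\varepsilon)$. Fix any reference point $t^* \in \mathbb{T}$: only finitely many $t_k$ lie in the compact set $[t^*, t^* + l]_{\mathbb{T}}$, and since $\varphi$ is rd-continuous on each piece between consecutive $t_k$ and has finite left-sided limits at each $t_k$, $\varphi$ is bounded on that set by some constant $M$. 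For an arbitrary $t \in \mathbb{T}$, choose $\tau \in \Gamma_\varepsilon \cap [t^* - t,\, t^* - t + l]$, so that $t + \tau \in [t^*, t^* + l]_{\mathbb{T}}$ and hence $\|\varphi(t+\tau)\| \leq M$. If $|t - t_k| > \varepsilon$ for all $k$, then (iii) gives $\|\varphi(t) - \varphi(t+\tau)\| < \varepsilon$, so $\|\varphi(t)\| < M + \varepsilon$ directly.

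The main obstacle is the remaining case, where $t$ lies within $\varepsilon$ of some $t_k$ and the inequality from (iii) cannot be invoked. The plan is to use the uniform continuity condition (ii): choose $\delta = \delta(\varepsilon)$ from (ii), then pick a nearby $t' \in \mathbb{T}$ lying in the same interval of continuity as $t$ with $|t - t'| < \delta$ but $|t' - t_k| > \varepsilon$. Then $\|\varphi(t) - \varphi(t')\| < \varepsilon$ by (ii), while the previous step gives $\|\varphi(t')\| < M + \varepsilon$, so altogether $\|\varphi(t)\| < M + 2\varepsilon$. The delicate technical point is guaranteeing that such a $t'$ really exists in the correct interval of continuity for every $t$ near a $t_k$; this forces the initial choice of $\varepsilon$ to be small relative to both $\theta$ and $\delta(\varepsilon)$, after which the uniform bound across $\mathbb{T}$ contradicts the assumed unboundedness and finishes the proof.
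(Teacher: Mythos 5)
Your overall strategy---bound $\varphi$ on one inclusion interval $[t^{*},t^{*}+l]_{\mathbb{T}}$ and translate every other point into it by an $\varepsilon$-almost period---is the classical Bohr argument, and it is essentially what the paper implicitly invokes: the paper writes no proof at all, only the remark that the lemma follows ``similar to the proof of Theorem 1.15 in \cite{25}''. The core step, for points $t$ with $|t-t_{k}|>\varepsilon$ for every $k$, is correct. The difficulties are concentrated where you yourself flag them, in the treatment of points within $\varepsilon$ of some $t_{k}$, and there are two of them, one cosmetic and one substantive.

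First, your parameter choice is circular as written: you need $|t-t'|<\delta(\varepsilon)$ together with $|t'-t_{k}|>\varepsilon$ while $|t-t_{k}|\leq\varepsilon$, which essentially forces $\delta(\varepsilon)>2\varepsilon$, i.e.\ you are choosing $\varepsilon$ ``small relative to $\delta(\varepsilon)$''. This is repairable by decoupling the tolerances: apply condition $(ii)$ of Definition \ref{def24} once with a fixed tolerance (say $1$) to obtain a fixed $\delta_{0}$, and only then choose the $\varepsilon$ used in condition $(iii)$ smaller than $\min\{\delta_{0}/3,\theta/4\}$; the resulting bound $M+1+\varepsilon$ still suffices. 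Second, and more seriously in the time-scale setting, the auxiliary point $t'$ need not exist in $\mathbb{T}$: if $\mathbb{T}$ is scattered near $t_{k}$ (e.g.\ $\mathbb{T}=\mathbb{Z}$ with $\delta_{0}<1$), the set of points of $\mathbb{T}$ lying in the same interval of continuity as $t$ with $|t-t'|<\delta_{0}$ and $|t'-t_{j}|>\varepsilon$ for all $j$ can be empty. In that situation condition $(ii)$ is vacuous and condition $(iii)$ excludes exactly the points you need, so nothing in Definition \ref{def24} controls the values $\varphi(t_{k})$, and your argument only yields boundedness of $\varphi$ away from the impulse points. Closing this gap requires some additional mechanism for the sequence $\{\varphi(t_{k})\}$ (for instance working with the Bochner-type Definition \ref{def28} along the almost periodic set $\{t_{k}\}$, or an explicit almost periodicity assumption on the sequences $\{\varphi(t_{k})\}$ and $\{\varphi(t_{k}^{+})\}$), which neither your proof nor the paper's one-line citation supplies.
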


\begin{lemma}
If $\varphi$ is almost periodic and $F(\cdot)$ is uniformly continuous on the value field of $\varphi$, then $F\circ\varphi$ is almost periodic.
\end{lemma}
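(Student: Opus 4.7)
The plan is to verify, directly from Definition \ref{def24}, that $F\circ\varphi$ satisfies each of the three defining clauses of an almost periodic function. Condition (i), uniform almost periodicity of $\{t_k^j\}$, is a property of the discontinuity set $T\in\mathcal{B}$ alone and is inherited unchanged, since $F$ is continuous and so $F\circ\varphi$ has exactly the same intervals of continuity as $\varphi$. For conditions (ii) and (iii) I would combine the corresponding properties of $\varphi$ with uniform continuity of $F$; the hypothesis that $F$ is uniformly continuous on the value field of $\varphi$ is precisely what is needed, and it is meaningful because Lemma 2.2 guarantees that this value field is bounded.

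For condition (ii), fix $\varepsilon>0$. Uniform continuity of $F$ on $\varphi(\mathbb{T})$ yields $\eta=\eta(\varepsilon)>0$ with $\|F(u)-F(v)\|<\varepsilon$ whenever $\|u-v\|<\eta$. Condition (ii) for $\varphi$ then supplies $\delta=\delta(\eta)>0$ such that any $t',t''$ in the same interval of continuity with $|t'-t''|<\delta$ satisfy $\|\varphi(t')-\varphi(t'')\|<\eta$, hence $\|F(\varphi(t'))-F(\varphi(t''))\|<\varepsilon$. Thus $\delta(\varepsilon):=\delta(\eta(\varepsilon))$ witnesses condition (ii) for $F\circ\varphi$.

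For condition (iii), keep $\eta$ as above and, after shrinking if necessary, assume $\eta\leq\varepsilon$. Almost periodicity of $\varphi$ provides a relatively dense set $\Gamma_\eta\subset\Pi$ of $\eta$-almost periods such that for each $\tau\in\Gamma_\eta$, $\|\varphi(t+\tau)-\varphi(t)\|<\eta$ for every $t\in\mathbb{T}$ with $|t-t_k|>\eta$, and \emph{a fortiori} for every $t$ with $|t-t_k|>\varepsilon$. Composing with $F$ gives $\|F(\varphi(t+\tau))-F(\varphi(t))\|<\varepsilon$ on that set, so $\Gamma_\varepsilon:=\Gamma_\eta$ is a relatively dense set of $\varepsilon$-almost periods for $F\circ\varphi$, establishing clause (iii).

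The only real subtlety, and the only place where the hypotheses are used nontrivially, is the interplay between the modulus of continuity of $F$ and the $\eta$-dependence of $\delta$ and of the translation set $\Gamma_\eta$ for $\varphi$: one must apply the uniform continuity of $F$ on a bounded set (granted by Lemma 2.2) before invoking almost periodicity of $\varphi$, rather than the other way around. With that ordering fixed, each clause of Definition \ref{def24} transfers essentially verbatim from $\varphi$ to $F\circ\varphi$, and no further machinery (such as Bochner-type sequential extraction via Definition \ref{def28}) is needed, although the same argument clearly goes through in that formulation as well.
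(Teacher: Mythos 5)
Your proof is correct. The paper itself gives no argument for this lemma --- it merely states that it follows ``similar to the proofs of Theorem 1.15, Theorem 1.17 and Theorem 1.18 in \cite{25}'' --- and your direct verification of clauses (i)--(iii) of Definition \ref{def24}, applying the uniform continuity of $F$ on the (bounded, by the preceding lemma) range of $\varphi$ before invoking the $\eta$-translation set of $\varphi$ and noting that $\Gamma_\eta$ with $\eta\leq\varepsilon$ serves as a relatively dense set of $\varepsilon$-almost periods for $F\circ\varphi$, is exactly the standard argument being alluded to and supplies the details the paper omits.
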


\begin{lemma}
If $\varphi, \psi$ are almost periodic, then $\varphi+\psi$ is almost periodic.
\end{lemma}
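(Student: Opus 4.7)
The plan is to verify the two conditions in Definition \ref{def28} for $\varphi+\psi$, where I denote by $T_\varphi,T_\psi\in\mathcal{B}$ the sets of discontinuities of $\varphi$ and $\psi$, respectively. First I would observe that $\varphi+\psi\in PC_{rd}(\mathbb{T},\mathbb{R}^n)$ with discontinuities contained in $T_{\varphi+\psi}:=T_\varphi\cup T_\psi$, and that $T_{\varphi+\psi}\in\mathcal{B}$ (it is the strictly increasing rearrangement $s(T_\varphi\cup T_\psi)$). So condition (i) of Definition \ref{def28} becomes a statement about points lying in the same interval of continuity of \emph{both} $\varphi$ and $\psi$.

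For condition (i), I would fix $\varepsilon>0$, apply Definition \ref{def28}(i) to $\varphi$ and $\psi$ at the level $\varepsilon/2$ to obtain $\delta_\varphi(\varepsilon/2)$ and $\delta_\psi(\varepsilon/2)$, and then set $\delta:=\min\{\delta_\varphi(\varepsilon/2),\delta_\psi(\varepsilon/2)\}$. If $t',t''$ lie in the same interval of continuity of $\varphi+\psi$, they lie in the same interval of continuity of $\varphi$ and of $\psi$, so $|t'-t''|<\delta$ yields $\|\varphi(t')-\varphi(t'')\|<\varepsilon/2$ and $\|\psi(t')-\psi(t'')\|<\varepsilon/2$, whence the triangle inequality gives $\|(\varphi+\psi)(t')-(\varphi+\psi)(t'')\|<\varepsilon$.

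For condition (ii), I would use a standard diagonal subsequence argument. Given any sequence $\{s_m'\}\subset\Pi$, apply the almost periodicity of $\varphi$ to extract a subsequence $\{s_m^{(1)}\}$ such that $(\varphi(t+s_m^{(1)}),T_\varphi-s_m^{(1)})$ converges uniformly in $PC_{rd}(\mathbb{T},\mathbb{R}^n)\times\mathcal{B}$ in the sense of Definition \ref{def27}; then apply the almost periodicity of $\psi$ along this subsequence to extract a further subsequence $\{s_n\}\subset\{s_m^{(1)}\}$ along which $(\psi(t+s_n),T_\psi-s_n)$ is also uniformly convergent. Along $\{s_n\}$ the shifted sums $\varphi(t+s_n)+\psi(t+s_n)$ converge uniformly off $F_\varepsilon(s((T_\varphi-s_n)\cup(T_\psi-s_n)))$ by adding the two limits, and the shifted discontinuity sets $T_{\varphi+\psi}-s_n=(T_\varphi-s_n)\cup(T_\psi-s_n)$ converge in $\mathcal{B}$ (invoking Lemma \ref{lemd11} to justify that uniform convergence of each component set on $\mathcal{B}$ propagates to their union).

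The main obstacle I anticipate is the bookkeeping in step (ii): one must check carefully that uniform convergence of the pairs $(\varphi(t+s_n),T_\varphi-s_n)$ and $(\psi(t+s_n),T_\psi-s_n)$ in the sense of Definition \ref{def27}, with two a priori different limiting sets in $\mathcal{B}$, implies convergence of the combined pair $((\varphi+\psi)(t+s_n),T_{\varphi+\psi}-s_n)$ uniformly outside a single $\varepsilon$-neighborhood $F_\varepsilon(s((T_{\varphi+\psi}-s_n)\cup T^*))$ of the appropriate union. Handling this requires slightly enlarging $\varepsilon$ and invoking Lemma \ref{lemd11} together with the metric structure $\rho(\cdot,\cdot)$ on $\mathcal{B}$; everything else is routine and parallels the proof of Theorem 1.18 in \cite{25}.
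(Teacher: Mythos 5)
Your proposal is correct and follows essentially the route the paper intends: the paper gives no proof of this lemma, stating only that it follows as in Theorem 1.18 of \cite{25}, which is exactly the $\varepsilon/2$ argument for condition (i) and the common-subsequence argument for condition (ii) that you outline. Note that the paper's standing hypothesis already takes $\varphi$ and $\psi$ to have discontinuities from the \emph{same} set $T\in\mathcal{B}$, so the union bookkeeping you flag as the main obstacle in step (ii) disappears and your argument goes through directly.
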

\begin{lemma} \cite{17}
Assume that $a\in\mathcal{R}$ and $t_{0}\in{\mathbb{T}}$, if $a\in\mathcal{R}^{+}$ on $\mathbb{T}^{k}$, then $e_a(t,t_{0})> 0$ for all $t\in{\mathbb{T}}$.
\end{lemma}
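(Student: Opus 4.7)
The plan is to unpack the definition of the generalized exponential function given just above the statement. Writing
\[
e_a(t,t_0)=\exp\left(\int_{t_0}^{t}\xi_{\mu(\tau)}(a(\tau))\,\Delta\tau\right),
\]
positivity will follow immediately from the strict positivity of the real exponential, provided I can verify that the integrand $\xi_{\mu(\tau)}(a(\tau))$ is a \emph{real-valued}, rd-continuous function on $\mathbb{T}^k$, so that the $\Delta$-integral yields a real number.

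First I would check pointwise realness. If $\tau$ is right-dense, i.e.\ $\mu(\tau)=0$, then $\xi_0(a(\tau))=a(\tau)\in\mathbb{R}$ trivially. If $\tau$ is right-scattered, i.e.\ $\mu(\tau)>0$, then the hypothesis $a\in\mathcal{R}^+$ on $\mathbb{T}^k$ gives $1+\mu(\tau)a(\tau)>0$, so the principal branch $\mathrm{Log}(1+\mu(\tau)a(\tau))$ coincides with the real natural logarithm and is therefore real; dividing by the positive real $\mu(\tau)$ keeps the value in $\mathbb{R}$. So in either case $\xi_{\mu(\tau)}(a(\tau))\in\mathbb{R}$. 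Then I would note that rd-continuity of $a$ (implicit in $a\in\mathcal{R}$) together with continuity of $\log$ on $(0,\infty)$ propagates to rd-continuity of $\tau\mapsto\xi_{\mu(\tau)}(a(\tau))$, so that the delta-integral is well-defined and real. Exponentiating gives $e_a(t,t_0)>0$.

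The only real subtlety, and the main obstacle to guard against, is that the cylinder transformation is written using $\mathrm{Log}$, which is the principal branch of the complex logarithm. The hypothesis $a\in\mathcal{R}^+$ is used precisely to force the argument $1+\mu(\tau)a(\tau)$ to stay strictly positive, so that $\mathrm{Log}$ never encounters the branch cut and never contributes an imaginary $i\pi$; without this, $e_a(t,t_0)$ could become negative or properly complex. As an alternative, slightly more elementary route one could bypass the logarithm entirely by induction along right-scattered points using property $(ii)$ of Lemma~2.1, $e_a(\sigma(t),t_0)=(1+\mu(t)a(t))\,e_a(t,t_0)$, combined with continuity across right-dense intervals: starting from $e_a(t_0,t_0)=1>0$, the sign can never change since the multiplier $1+\mu(t)a(t)$ is strictly positive and the exponential is continuous between scattered jumps; property $(iii)$ of Lemma~2.1 then handles $t<t_0$ via $e_a(t,t_0)=1/e_a(t_0,t)$.
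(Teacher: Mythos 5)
Your argument is correct and is precisely the standard proof of this fact in the cited reference \cite{17}: the hypothesis $1+\mu(\tau)a(\tau)>0$ makes the cylinder-transformed integrand $\xi_{\mu(\tau)}(a(\tau))$ real-valued (and rd-continuous), so the delta integral is a real number and its exponential is strictly positive; the paper itself states the lemma without proof, so there is nothing further to compare against. The only caveat is that your secondary ``induction along right-scattered points'' sketch does not obviously extend to time scales whose right-scattered points fail to be order-isolated (e.g.\ Cantor-like configurations), but your primary argument is complete and does not rely on it.
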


Similar to the proof of Lemma 2.3 in \cite{21}, one can show that:
\begin{lemma}\label{lemd12} Let $f\in PC^{1}_{rd}[\mathbb{T}, \mathbb{R}]$, if $f(t)>0$ for $t\in\mathbb{T}$, then
\begin{eqnarray*}
\frac{f^{\Delta}(t)}{f^{\sigma}(t)}\leq[\ln (f(t))]^{\Delta}\leq\frac{f^{\Delta}(t)}{f(t)},
\end{eqnarray*}
where $PC_{rd}^{1}[\mathbb{T}, \mathbb{R}]=\{y:\mathbb{T}\rightarrow\mathbb{R}$ is rd-continuous except at $t_k$, $k=1, 2,\ldots$, for which
$y(t_{k}^{-})$, $y(t_{k}^{+})$, $y^{\Delta}(t_{k}^{-})$, $y^{\Delta}(t_{k}^{+})$ exist with $y(t_{k}^{-})=y(t_{k})$, $y^{\Delta}(t_{k}^{-})=y^{\Delta}(t_{k})\}.$
\end{lemma}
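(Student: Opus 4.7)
The plan is to split into two cases according to whether $t$ is right-dense or right-scattered, since in both situations $[\ln f(t)]^{\Delta}$ admits a completely explicit formula in terms of $f(t)$ and $f^{\sigma}(t)$.

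At a right-dense point $t$, we have $\sigma(t)=t$ and $\mu(t)=0$, hence $f^{\sigma}(t)=f(t)$ and the two outer bounds $f^{\Delta}(t)/f^{\sigma}(t)$ and $f^{\Delta}(t)/f(t)$ already coincide. Computing directly, $[\ln f(t)]^{\Delta}=\lim_{s\to t}\frac{\ln f(t)-\ln f(s)}{t-s}$, and since $f(t)>0$ the time-scale chain rule (equivalently, the ordinary chain rule applied at right-dense points where $f$ is differentiable in the usual sense) yields $[\ln f(t)]^{\Delta}=f^{\Delta}(t)/f(t)$. Thus the asserted double inequality is in fact an equality at every right-dense point.

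At a right-scattered point $t$, the definition of the delta derivative gives
\begin{equation*}
[\ln f(t)]^{\Delta}=\frac{\ln f(\sigma(t))-\ln f(t)}{\mu(t)}=\frac{1}{\mu(t)}\ln\frac{f^{\sigma}(t)}{f(t)}.
\end{equation*}
Setting $a=f(t)>0$ and $b=f^{\sigma}(t)>0$, I would then invoke the elementary real-variable inequality $\frac{b-a}{b}\leq\ln\frac{b}{a}\leq\frac{b-a}{a}$, valid for all $a,b>0$ (proved, e.g., by applying the mean value theorem to $\ln x$ on $[\min(a,b),\max(a,b)]$, or by the concavity of $\ln$). Dividing through by $\mu(t)>0$ and using the identity $b-a=f^{\sigma}(t)-f(t)=\mu(t)f^{\Delta}(t)$ to rewrite both outer quantities gives exactly
\begin{equation*}
\frac{f^{\Delta}(t)}{f^{\sigma}(t)}\leq[\ln f(t)]^{\Delta}\leq\frac{f^{\Delta}(t)}{f(t)},
\end{equation*}
which completes the right-scattered case.

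I do not anticipate a genuine obstacle, since both steps reduce to standard computations; the only mild subtlety is at the impulse instants $t_k$, where $f$ may only be left-continuous. There one must interpret $f^{\Delta}(t_k)$ and $[\ln f(t_k)]^{\Delta}$ via the one-sided derivatives that are built into the definition of $PC^{1}_{rd}[\mathbb{T},\mathbb{R}]$, after which the same right-dense/right-scattered dichotomy and the same elementary inequality go through verbatim. The key point is simply that the positivity hypothesis $f(t)>0$ makes both $a$ and $b$ strictly positive, so the logarithm inequality applies without further assumptions on $f^{\Delta}(t)$.
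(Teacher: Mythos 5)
Your proof is correct: the right-dense/right-scattered dichotomy, the explicit formula $[\ln f(t)]^{\Delta}=\frac{1}{\mu(t)}\ln\frac{f^{\sigma}(t)}{f(t)}$ at scattered points, and the elementary inequality $\frac{b-a}{b}\leq\ln\frac{b}{a}\leq\frac{b-a}{a}$ combined with $f^{\sigma}(t)-f(t)=\mu(t)f^{\Delta}(t)$ are exactly the standard argument. The paper itself omits the proof, deferring to Lemma 2.3 of its reference [21], and that proof proceeds by the same dichotomy, so your proposal matches the intended route; your remark about interpreting the derivatives one-sidedly at the impulse instants $t_k$ is the right way to handle the only delicate point.
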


\begin{lemma}\label{lem27}\cite{26}
Assume that $x\in PC^{1}_{rd}[\mathbb{T}, \mathbb{R}]$ and
\begin{eqnarray*}
 \left\{%
\begin{array}{lcrcl}
x^{\Delta}(t)\leq(\geq) p(t)x(t)+q(t),\,\,t\neq t_{k},\,\,t\in[t_{0}, +\infty)_{\mathbb{T}},\\
x(t_{k}^{+})\leq(\geq) d_{k}x(t_{k})+b_{k},\,\,t=t_{k},\,\,k\in\mathbb{N},
\end{array}\right.
\end{eqnarray*}
then for $t\geq t_{0}\geq0$,
\begin{eqnarray*}
x(t)&\leq(\geq)& x(t_{0})\prod_{t_{0}<t_{k}<t}d_{k}e_{p}(t, t_{0})+\sum_{t_{0}<t_{k}<t}\bigg(\prod_{t_{0}<t_{j}<t}d_{j}e_{p}(t, t_{k})\bigg)b_{k}\\
&&+\int_{t_{0}}^{t}\prod_{s<t_{k}<t}d_{k}e_{p}(t, \sigma(s))q(s)\Delta s.
\end{eqnarray*}
\end{lemma}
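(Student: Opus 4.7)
\textbf{Proof proposal for Lemma \ref{lem27}.}

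My plan is to establish the inequality by induction on the number of impulse points crossed, reducing each inductive step to the classical (non-impulsive) Gronwall-type comparison on time scales. I will prove the ``$\leq$'' version; the ``$\geq$'' version follows by the same argument applied to $-x$ with $-p$, $-q$, $-b_k$ (equivalently by reversing all inequalities).

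The base step handles $t\in[t_0,t_1]_{\mathbb{T}}$, where no impulse has yet occurred. On this interval the differential inequality $x^{\Delta}\le p(t)x+q(t)$ is a standard time-scale linear Gronwall inequality; applying the variation-of-parameters comparison (Bohner--Peterson) yields
\[
x(t)\;\le\; x(t_0)\,e_p(t,t_0)+\int_{t_0}^{t} e_p(t,\sigma(s))\,q(s)\,\Delta s,
\]
which matches the claimed formula since the products $\prod_{t_0<t_k<t}$ and $\prod_{s<t_k<t}$ are empty and the sum is empty. For the inductive step, suppose the formula holds on $[t_0,t_k]_{\mathbb{T}}$. At $t=t_k$ the impulse condition gives $x(t_k^+)\le d_k x(t_k)+b_k$; substituting the inductive bound for $x(t_k)$ produces an explicit upper estimate for $x(t_k^+)$ of the same form, now with one additional factor $d_k$ in front of each term that was already present and a new contribution $b_k$. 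Then on $(t_k,t_{k+1}]_{\mathbb{T}}$, no further impulse occurs, so the continuous Gronwall comparison again gives
\[
x(t)\;\le\; x(t_k^+)\,e_p(t,t_k)+\int_{t_k}^{t} e_p(t,\sigma(s))\,q(s)\,\Delta s.
\]

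The bulk of the work is now bookkeeping: I would use the semigroup identity $e_p(t,t_k)\,e_p(t_k,t_j)=e_p(t,t_j)$ (Lemma 2.1(iv)) to absorb the propagator from $t_k$ back to every earlier $t_j$ and to $t_0$, collect the accumulated impulse multipliers into $\prod d_k$, and split the integral $\int_{t_0}^{t}=\int_{t_0}^{t_1}+\cdots+\int_{t_k}^{t}$ so that on each sub-integral the appropriate product $\prod_{s<t_k<t}d_k$ appears in front of the integrand. Combining these three contributions yields exactly the stated formula on $(t_k,t_{k+1}]_{\mathbb{T}}$, closing the induction.

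The main obstacle is not conceptual but notational: one must keep careful track of which products of the $d_k$'s are inherited by which term after each impulse, and verify that the exponent-indexing in $e_p(t,\sigma(s))$ versus $e_p(t,t_k)$ is consistent with the time-scale variation-of-constants formula at every step. Implicit hypotheses needed throughout are that $p\in\mathcal{R}$ (so that $e_p(\cdot,\cdot)$ is defined) and that the multipliers $d_k$ and the exponentials are nonnegative so that inequalities are preserved under multiplication; under (H$_1$)--(H$_2$) of the paper, $d_k=1+\lambda_{ik}>0$ and $e_p$ is positive by Lemma 2.5, so these requirements are met.
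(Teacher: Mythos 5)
The paper does not prove this lemma at all: it is quoted from reference \cite{26} and used as a black box, so there is no in-paper proof to compare against. Your interval-by-interval induction combined with the non-impulsive time-scale variation-of-constants comparison and the semigroup identity $e_p(t,t_k)e_p(t_k,t_j)=e_p(t,t_j)$ is the standard and correct way to establish such impulsive Gronwall estimates, and the hypotheses you flag ($p\in\mathcal{R}^{+}$ so that $e_p(t,\sigma(s))>0$, and $d_k\geq 0$ so that multiplying the inductive bound by $d_k$ preserves the inequality) are exactly the ones needed. Two small points. First, your reduction of the ``$\geq$'' case is slightly off: setting $y=-x$ turns $x^{\Delta}\geq p x+q$ into $y^{\Delta}\leq p y-q$, so one keeps $p$ and $d_k$ and negates only $q$, $b_k$ and the initial data; replacing $p$ by $-p$ would change the exponential to $e_{-p}$ and break the formula. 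Second, the bookkeeping you describe (each $b_k$ inherits only the multipliers from impulses occurring \emph{after} $t_k$) actually produces the middle term with $\prod_{t_k<t_j<t}d_j$ rather than the $\prod_{t_0<t_j<t}d_j$ printed in the statement; the printed form is a typo carried over from the source, and your induction, done carefully, would surface it. Neither issue affects the soundness of your overall strategy.
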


\section{Comparison theorems}

\setcounter{equation}{0}
{\setlength\arraycolsep{2pt}}
 \indent

 In this section, we state and prove some comparison theorems.
\begin{lemma}\label{lem211}
Assume that $x\in PC^{1}_{rd}[\mathbb{T}, \mathbb{R}]$, $x(t)>0$ on $\mathbb{T}, d\geq0, a, b> 0, t-\tau(t)\in\mathbb{T}$ for $t\in \mathbb{T}, 0<d_k\leq 1,b_k\leq 0$, where $\tau:\mathbb{T}\rightarrow\mathbb{R}^{+}$ is a rd-continuous function  and $\bar{\tau}=\sup\limits_{t\in \mathbb{T}}\{\tau(t)\}, k\in \mathbb{N}$, and there exist positive constants $\alpha,\beta$ such that $\alpha\leq\prod_{t_{0}<t_{k}<t}d_{k}\leq \beta$ for $t\in \mathbb{T}$. Then the following hold:
\begin{itemize}
    \item  [$(i)$] If
     {\setlength\arraycolsep{2pt}
\begin{eqnarray}\label{e24}
 \left\{%
\begin{array}{lcrcl}
x^{\Delta}(t)\leq x^\sigma(t)(b-ax(t-\tau(t)))+d,\,\,t\neq t_{k},\,\,t\in[t_{0}, +\infty)_{\mathbb{T}},\\
x(t_{k}^{+})\leq d_{k}x(t_{k})+b_{k},\,\,t=t_{k},\,\,k\in\mathbb{N},
\end{array}\right.
\end{eqnarray}}
where $t_{0}\in{\mathbb{T}}$, with initial condition
\begin{eqnarray*}
x(t; t_{0}, \phi_{0})=\phi_{0}(t),\,\,t\in[t_{0}-\bar{\tau}, t_{0}]_{\mathbb{T}},
\end{eqnarray*}
where $\phi_0\in C([t_{0}-\bar{\tau}, t_{0}],(0,+\infty))$, then
 \begin{eqnarray*}
 \limsup_{t\rightarrow+\infty}x(t)\leq -\frac{d\beta}{b}+\bigg(\frac{d\beta}{b}+ \bar{x}\beta\bigg)\exp\bigg\{\frac{b\bar{\tau}}{1-b\bar{\mu}}\bigg\}:=M,
   \end{eqnarray*}
where $\bar{x}$ is the unique positive root of $x(ax-b)-d=0$.

Especially, if $d=0$, then
${M}=\frac{b\beta}{a}\exp\big\{\frac{b\bar{\tau}}{1-b\bar{\mu}}\big\}.$

    \item  [$(ii)$] If {\setlength\arraycolsep{2pt}
\begin{eqnarray}\label{e25}
 \left\{%
\begin{array}{lcrcl}
x^{\Delta}(t)\geq x(t)(b-ax(t-\tau(t)))+d,\,\,t\neq t_{k},\,\,t\in[t_{0}, +\infty)_{\mathbb{T}},\\
x(t_{k}^{+})\geq d_{k}x(t_{k}),\,\,t=t_{k},\,\,k\in\mathbb{N},
\end{array}\right.
\end{eqnarray}}
where $t_{0}\in{\mathbb{T}}$,  with initial condition
\begin{eqnarray*}
x(s; t_{0}, \phi_{0})=\phi_{0}(s),\,\,\phi_{0}(t_0)>0,\,\,s\in[t_{0}-\bar{\tau}, t_{0}]_{\mathbb{T}},
\end{eqnarray*}
where $\phi_0\in C([t_{0}-\bar{\tau}, t_{0}],[0,+\infty)]$, and there exists a positive constant $N$ such that
 $0\leq\limsup_{t\rightarrow+\infty}x(t)\leq N<+\infty$ and $1-aN\bar{\mu}>0$, then
    \begin{eqnarray*}
 \liminf_{t\rightarrow+\infty}x(t)\geq \frac{b\alpha^2}{a}\exp\bigg\{\frac{\log(1-aN\bar{\mu})}{\bar{\mu}}\bar{\tau}\bigg\}:=m.
   \end{eqnarray*}
\end{itemize}
\end{lemma}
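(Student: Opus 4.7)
The plan is to reduce the impulsive delay inequality (2.4) (resp.\ (2.5)) to a linear impulsive inequality by dropping the sign-definite nonlinearity, apply Lemma 2.7, and use a contradiction argument to force $x$ to visit a neighborhood of the equilibrium $\bar{x}$ (resp.\ a designated positive value) within every window of length $\bar{\tau}$.

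For part (i), dropping the nonpositive term $-a x^\sigma(t) x(t-\tau(t))$ in (2.4) gives $x^\Delta(t) \leq b x^\sigma(t) + d$. Using $x^\sigma = x + \mu x^\Delta$ and $b\bar{\mu} < 1$ (implicit in the definition of $M$), this rewrites as $x^\Delta \leq Bx + D$ with $B = b/(1-b\bar{\mu})$ and $D = d/(1-b\bar{\mu})$. The substantive claim is that for every large $t$ there is some $s \in [t-\bar{\tau}, t]_{\mathbb{T}}$ with $x(s) \leq \bar{x}$: otherwise $x(u) > \bar{x}$ for all large $u$, and since $b - a\bar{x} = -d/\bar{x}$, (2.4) gives $x^\Delta(u) \leq -x^\sigma(u)\, d/\bar{x} + d \leq 0$; combined with the nonpositive impulses ($0 < d_k \leq 1$, $b_k \leq 0$), $x$ converges to some $L \geq \bar{x}$, and passing to the limit yields $0 \leq -(aL^2 - bL - d) \leq 0$ with strict inequality unless $L = \bar{x}$, contradicting $x > \bar{x}$. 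Once this is secured, for each large $t$ pick such an $s$ and apply Lemma 2.7 with $\prod d_k \leq \beta$, $b_k \leq 0$:
\[
 x(t) \leq \beta\left(x(s) + \tfrac{d}{b}\right) e_B(t, s) - \beta\tfrac{d}{b}.
\]
The standard estimate $\xi_\mu(B) \leq B$ yields $e_B(t,s) \leq \exp(B\bar{\tau}) = \exp(b\bar{\tau}/(1-b\bar{\mu}))$, and $x(s) \leq \bar{x}$ delivers $x(t) \leq M$. The $d=0$ special case reduces to $\bar{x} = b/a$, matching the stated formula.

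For part (ii), the strategy is symmetric. Using $\limsup x \leq N$, for $t$ large one has $x(t-\tau(t)) \leq N$, so (2.5) gives $x^\Delta \geq x(b - aN) \geq -aNx$. The parallel invariant-region claim is that for every large $t$ there is some $s \in [t-\bar{\tau}, t]_{\mathbb{T}}$ with $x(s) \geq b\alpha/a$: otherwise $x(t-\tau(t)) < b/a$ forces $x^\Delta > 0$, and the impulse factor $\prod d_k \geq \alpha$ prevents $x$ from being driven below this level, contradicting the assumed supremum. Then Lemma 2.7 (lower version) yields $x(t) \geq \alpha x(s) e_{-aN}(t,s)$, and since $\xi_\mu(-aN) \geq \log(1 - aN\bar{\mu})/\bar{\mu}$ (valid under $1 - aN\bar{\mu} > 0$, by monotonicity of $h \mapsto \log(1+hz)/h$ for $z<0$), one gets $e_{-aN}(t,s) \geq \exp(\log(1-aN\bar{\mu})\bar{\tau}/\bar{\mu})$. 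Substituting $x(s) \geq b\alpha/a$ gives $x(t) \geq m$. The principal obstacle in both parts is the invariant-region step: the interplay of the delay with the impulsive damping (factors $\alpha,\beta$) must be traded off to force $x$ back across the relevant threshold within every $\bar{\tau}$-window; after that, the rest is a routine application of Lemma 2.7 with the sign-aware time-scale exponential estimates.
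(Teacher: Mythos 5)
Your reduction to a linear impulsive inequality, the application of Lemma \ref{lem27} with the bounds $\alpha\le\prod d_k\le\beta$, and the estimates $\xi_\mu(B)\le B$ and $\xi_\mu(-aN)\ge\log(1-aN\bar\mu)/\bar\mu$ all match the paper's computations. But the step you yourself flag as ``the principal obstacle'' --- the window property --- is where the proof breaks, in both parts. In (i), the negation of ``for every large $t$ there is $s\in[t-\bar\tau,t]_{\mathbb T}$ with $x(s)\le\bar x$'' is \emph{not} ``$x(u)>\bar x$ for all large $u$''; it is ``there are arbitrarily large $t$ whose entire window lies above $\bar x$,'' and that situation really occurs (delayed-logistic-type solutions hover above $\bar x$ on stretches of length $\ge\bar\tau$ arbitrarily late, and even your own fallback scenario $x\downarrow\bar x$ from above defeats the claim without contradicting anything). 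So the claim is false as stated and your argument for it proves the wrong implication. The paper sidesteps this entirely: it works only at points $s_i$ where $x(s_i)$ is large \emph{and} $x^\Delta(s_i)\ge 0$ (such points must exist when $x$ keeps reaching new highs, because the impulses only decrease $x$), and at such a point the inequality \eqref{e24} itself yields $x^\sigma(s_i)(b-ax(s_i-\tau(s_i)))+d\ge 0$, hence $x(s_i-\tau(s_i))\le\bar x$ --- no window property is needed. After that, the Gronwall step over $[s_i-\tau(s_i),s_i]$ is exactly yours.

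Part (ii) has the same defect in sharper form. Your justification (``$x(t-\tau(t))<b/a$ forces $x^\Delta>0$, and the impulse factor prevents $x$ from being driven below this level'') is not an argument: the delayed values on a window $[t-\bar\tau,t]$ are drawn from $[t-2\bar\tau,t]$, outside the window you are reasoning about, and the impulses \emph{do} push $x$ down (by factors $d_k\le 1$); the cumulative factor $\alpha$ has to be threaded through the estimate, not invoked as a barrier. The paper's proof of (ii) is a genuine three-case analysis relative to a threshold $\vartheta b/a$: the ``eventually below'' case is ruled out by integrating the logarithmic derivative (Lemma \ref{lemd12}) to force $x\to+\infty$, contradicting $\limsup x\le N$; the ``eventually above'' case and the oscillatory case (handled via crossing sequences $s_i<s_i'$) each combine the exponential lower bound $x(t)\ge\alpha x(t-\tau(t))e_{-a(N+\varepsilon)}(t,t-\tau(t))$ with the positivity of $b-ax(s-\tau(s))$ on the sub-threshold stretches, which is where the second factor of $\alpha$ in $m=\frac{b\alpha^2}{a}\exp\{\cdots\}$ comes from. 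None of this is recoverable from the one-sentence sketch, so as written the proposal has a genuine gap at the central step of both parts.
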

\begin{proof}
The proof of $(i)$.  Suppose that $\limsup\limits_{t\rightarrow+\infty}x(t)=+\infty$. Then we claim that
there must exist an  $s_{1}(>0)\in \mathbb{T}$ and $s_1\notin \{t_k\}$  such that
 \begin{eqnarray*}
  x(s_{1})\geq \bar{x}+1,\quad x^{\Delta}(t)|_{t=s_{1}}\geq0.
\end{eqnarray*}
Otherwise, since $\limsup\limits_{t\rightarrow+\infty}x(t)=+\infty$, for every $t\in [t_0,+\infty)_\mathbb{T}\setminus \{t_k\}$, we have the following two cases:
\begin{itemize}
  \item Case (1) For every $t\in [t_0,+\infty)_\mathbb{T}\setminus \{t_k\}, x(t)<\bar{x}+1$. In this case, since $x(t_k^+)\leq d_{k}x(t_{k})+b_{k}\leq x(t_k), k\in \mathbb{N}$, so, $x(t)$ is bounded on $[t_0,+\infty)_\mathbb{T}$, which contradicts $\limsup\limits_{t\rightarrow+\infty}x(t)=+\infty$.
  \item Case (2) For every $t\in [t_0,+\infty)_\mathbb{T}\setminus \{t_k\}$, if $x(t)\geq \bar{x}+1$, then  $x^{\Delta}(t)<0$. In this case, since $x^{\Delta}(t)<0$ and $x(t_k^+)\leq d_{k}x(t_{k})+b_{k}\leq x(t_k), k\in \mathbb{N}$, so, $x(t)$ is decreasing on $[t_0,+\infty)_\mathbb{T}$, which also contradicts $\limsup\limits_{t\rightarrow+\infty}x(t)=+\infty$.
\end{itemize}
Therefore, our claim holds. Similarly, by induction, we can choose a  sequence $\{s_i\}\in (0,+\infty)_\mathbb{T}\setminus\{t_k\}$ satisfying
 \begin{eqnarray*}
  x(s_{i})\geq \bar{x}+i,\quad x^{\Delta}(t)|_{t=s_{i}}\geq0,\quad i=1, 2, \ldots.
\end{eqnarray*}
Thus, from \eqref{e24} we have
\begin{eqnarray*}
x^\sigma(s_{i})(b-ax(s_{i}-\tau(s_{i})))+d\geq 0,\,\,i=1, 2, \ldots,
\end{eqnarray*}
so
\begin{eqnarray}\label{e26}
x(s_{i}-\tau(s_{i}))\leq\frac{1}{a}\bigg(b+\frac{d}{x^\sigma(s_{i})}\bigg)\leq \frac{1}{a}\bigg(b+\frac{d}{\bar{x}}\bigg)=\bar{x},\,\,i=1, 2, \ldots.
\end{eqnarray}
Since
\begin{eqnarray*}
x^{\Delta}(t)\leq bx^{\sigma}(t)+d=b(x(t)+\mu(t)x^{\Delta}(t))+d,
\end{eqnarray*}
we have
\begin{eqnarray*}
(1-b\bar{\mu})x^{\Delta}(t)\leq(1-b\mu(t))x^{\Delta}(t)\leq bx(t)+d,
\end{eqnarray*}
then
\begin{eqnarray*}
x^{\Delta}(t)\leq \frac{b}{1-b\bar{\mu}}x(t)+\frac{d}{1-b\bar{\mu}}.
\end{eqnarray*}
Consider the following inequality:
\begin{eqnarray*}
 \left\{%
\begin{array}{lcrcl}
x^{\Delta}(t)\leq \frac{b}{1-b\bar{\mu}}x(t)+\frac{d}{1-b\bar{\mu}},\,\,t\neq t_{k},\,\,t\in[t_{0}^{\ast}, +\infty)_{\mathbb{T}},\,\,t_{0}^{\ast}\geq t_{0},\\
x(t_{k}^{+})\leq d_{k}x(t_{k})+b_{k},\,\,t=t_{k},\,\,k\in\mathbb{N}.
\end{array}\right.
\end{eqnarray*}
Since $b_k\leq 0$, for $t>t_{0}^{\ast}\geq t_{0}$, by use of Lemma \ref{lem27}, we have
\begin{eqnarray*}
x(t)\leq x(t_{0}^{\ast})\prod_{t_{0}^{\ast}<t_{k}<t}d_{k}e_{\frac{b}{1-b\bar{\mu}}}(t, t_{0}^{\ast})+\int_{t_{0}^{\ast}}^{t}\prod_{s<t_{k}<t}d_{k}e_{\frac{b}{1-b\bar{\mu}}}(t, \sigma(s))\frac{d}{1-b\bar{\mu}}\Delta s.
\end{eqnarray*}
In view of $\prod_{t_{0}<t_{k}<t}d_{k}\leq \beta$, for $t>t^*$, we have
\begin{eqnarray}\label{e27}
x(t)&\leq& x(t_{0}^{\ast})\beta e_{\frac{b}{1-b\bar{\mu}}}(t, t_{0}^{\ast})+\beta\int_{t_{0}^{\ast}}^{t}e_{\frac{b}{1-b\bar{\mu}}}(t, \sigma(s))\frac{d}{1-b\bar{\mu}}\Delta s\nonumber\\
&\leq& x(t_{0}^{\ast})\beta e_{\frac{b}{1-b\bar{\mu}}}(t, t_{0}^{\ast})-\frac{d\beta}{b}[1-e_{\frac{b}{1-b\bar{\mu}}}(t, t_{0}^{\ast})]\nonumber\\
&=&-\frac{d\beta}{b}+\bigg(\frac{d\beta}{b}+ x(t_{0}^{\ast})\beta\bigg) e_{\frac{b}{1-b\bar{\mu}}}(t, t_{0}^{\ast}).
\end{eqnarray}
According to \eqref{e26} and \eqref{e27}, for $i=1, 2, \ldots$, we obtain
\begin{eqnarray}\label{e28}
x(s_{i})&\leq&-\frac{d\beta}{b}+\bigg(\frac{d\beta}{b}+ x(s_{i}-\tau(s_{i}))\beta\bigg) e_{\frac{b}{1-b\bar{\mu}}}(s_{i}, s_{i}-\tau(s_{i}))\nonumber\\
&\leq&-\frac{d\beta}{b}+\bigg(\frac{d\beta}{b}+ \bar{x}\beta\bigg) e_{\frac{b}{1-b\bar{\mu}}}(s_{i}, s_{i}-\tau(s_{i})).
\end{eqnarray}

For every $\theta\in\mathbb{T}$, if $\mu(\theta)=0$, then
\begin{eqnarray*}
\xi_{\mu}\bigg(\frac{b}{1-b\bar{\mu}}\bigg)=\frac{b}{1-b\bar{\mu}},
\end{eqnarray*}
if $\mu(\theta)\neq0$, then
\begin{eqnarray*}
\xi_{\mu}\bigg(\frac{b}{1-b\bar{\mu}}\bigg)=\frac{\log(1+\frac{b}{1-b\bar{\mu}}\mu(\theta))}{\mu(\theta)}\leq \frac{b}{1-b\bar{\mu}}.
\end{eqnarray*}
Hence, we have
\begin{eqnarray*}
\int_{s_{i}-\tau(s_{i})}^{s_{i}}\xi_{\mu}\bigg(\frac{b}{1-b\bar{\mu}}\bigg)\Delta\theta&\leq&\int_{s_{i}-\tau(s_{i})}^{s_{i}}\frac{b}{1-b\bar{\mu}}\Delta\theta\leq b\tau(s_i)\leq \frac{b\bar{\tau}}{1-b\bar{\mu}},\,\,i=1, 2, \ldots,
\end{eqnarray*}
so
\begin{eqnarray*}
\exp\bigg\{\int_{s_{i}-\tau(s_{i})}^{s_{i}}\xi_{\mu}\bigg(\frac{b}{1-b\bar{\mu}}\bigg)\bigg\}\Delta\theta\leq \exp\bigg\{\frac{b}{1-b\bar{\mu}}\bigg\},\,\,k=1, 2, \ldots.
\end{eqnarray*}
Thus
\begin{eqnarray}\label{e29}
e_{\frac{b}{1-b\bar{\mu}}}(s_{i}, t_{k})<e_{\frac{b}{1-b\bar{\mu}}}(s_{i}, s_{i}-\tau(s_{i}))\leq \exp\bigg\{\frac{b}{1-b\bar{\mu}}\bigg\},\,\,i=1, 2, \ldots.
\end{eqnarray}
It follows from $\eqref{e28}$ and $\eqref{e29}$ that
\begin{eqnarray*}
x(s_{i})\leq-\frac{d\beta}{b}+\bigg(\frac{d\beta}{b}+ \bar{x}\beta\bigg)\exp\bigg\{\frac{b}{1-b\bar{\mu}}\bigg\},\,\,i=1, 2, \ldots.
\end{eqnarray*}
Hence, $\limsup\limits_{i\rightarrow+\infty}x(s_{i})<+\infty$. This contradicts the assumption.

Similarly, we can get
$\limsup_{t\rightarrow+\infty}x(t)\leq M.$

The proof of (ii).
For any positive constant $\varepsilon$ small enough, it follows from $\limsup_{t\rightarrow+\infty}x(t)\leq N$ that there exists large enough $T_{1}$
such that
\begin{eqnarray*}
 x(t)\leq N+\varepsilon,\,\, t>T_{1},
\end{eqnarray*}
then $x(t-\tau(t))\leq N+\varepsilon$ for $t>T_{1}+\bar{\tau}$. So we have
\begin{eqnarray*}\label{e211}
 x^{\Delta}(t)&\geq& x(t)(b-ax(t-\tau(t)))+d\nonumber\\
 &\geq& -a({N}+\varepsilon)x(t),\,\,t\geq T_1+\bar{\tau},\,\,t\neq t_{k}, k\in \mathbb{N}.
\end{eqnarray*}

Consider the following inequality
\begin{eqnarray*}
 \left\{%
\begin{array}{lcrcl}
x^{\Delta}(t)\geq -a(N+\varepsilon)x(t),\,\,t\neq t_{k},\,\,t\in[t_{0}^{\ast}, +\infty)_{\mathbb{T}},\,\,t_{0}^{\ast}\geq T_{1}+\bar{\tau}, t_0^*\in \mathbb{T},\\
x(t_{k}^{+})\geq d_{k}x(t_{k}),\,\,t=t_{k},\,\,k\in\mathbb{N}.
\end{array}\right.
\end{eqnarray*}
For $t\geq t_{0}^{\ast}$, by use of Lemma \ref{lem27}, we have
\begin{eqnarray*}\label{2.9}
 x(t)\geq x(t_{0}^{\ast})\alpha  e_{-a(N+\varepsilon)}(t, t_{0}^{\ast}).
 \end{eqnarray*}
 Especially, we have
 \begin{eqnarray}\label{2.10}
 x(t)\geq x(t-\tau(t))\alpha  e_{-a(N+\varepsilon)}(t, t-\tau(t)), \,\, t\geq t_0^*+\bar{\tau}.
 \end{eqnarray}

For every $\theta\in\mathbb{T}$, if $\mu(\theta)=0$, then
\begin{eqnarray*}
\xi_{\mu}(-a(N+\varepsilon))=-a(N+\varepsilon),
\end{eqnarray*}
if $\mu(\theta)\neq0$, then
\begin{eqnarray*}
\xi_{\mu}(-a(N+\varepsilon))=\frac{\log(1-a(N
+\varepsilon)\mu(\theta))}{\mu(\theta)}
\geq\frac{\log(1-a(N+\varepsilon)\bar{\mu})}{\bar{\mu}}<-a(N+\varepsilon).
\end{eqnarray*}
Hence,  we have
\begin{eqnarray*}
\int_{t-\tau(t)}^{t}\xi_{\mu}(-a(N+\varepsilon))\Delta\theta
&\geq&\min\bigg\{\int_{t-\tau(t)}^{t}-a(N+\varepsilon)\Delta\theta, \int_{t-\tau(t)}^{t}\frac{\log(1-a(N+\varepsilon)\bar{\mu})}{\bar{\mu}}\Delta\theta\bigg\}\\
&=& \frac{\log(1-a(N+\varepsilon)\bar{\mu})\tau(t)}{\bar{\mu}}\\
&\geq&\frac{\log(1-a(N+\varepsilon)\bar{\mu})}{\bar{\mu}}\bar{\tau}, \,\, t\geq t_0^*+\bar{\tau},
\end{eqnarray*}
so
\begin{eqnarray*}
\exp\bigg\{\int_{t-\tau(t)}^{t}\xi_{\mu}(-a(N+\varepsilon))\bigg\}\Delta\theta
\geq \exp\bigg\{\frac{\log(1-a(\tilde{N}+\varepsilon)\bar{\mu})}{\bar{\mu}}\bar{\tau}\bigg\}, \,\, t\geq t_0^*+\bar{\tau}.
\end{eqnarray*}
Thus
\begin{eqnarray}\label{e214}
e_{-a(N+\varepsilon)}(t, t-\tau(t))\geq \exp\bigg\{\frac{\log(1-a(N+\varepsilon)\bar{\mu})}{\bar{\mu}}\bar{\tau}\bigg\}, \,\, t\geq t_0^*+\bar{\tau}.
\end{eqnarray}
By use of \eqref{2.10} and \eqref{e214}, we obtain
\begin{eqnarray}\label{2.12}
 x(t)\geq \alpha x(t-\tau(t))\exp\bigg\{\frac{\log(1-a(N+\varepsilon)\bar{\mu})}{\bar{\mu}}\bar{\tau}\bigg\}, \,\, t\geq t_0^*+\bar{\tau}.
   \end{eqnarray}

 From \eqref{e25}, by using Lemma \ref{lemd12},  we have
{\setlength\arraycolsep{2pt}
\begin{eqnarray}\label{e251}
 \left\{%
\begin{array}{lcrcl}
(\ln x(t))^\Delta\geq\frac{x(t)}{x(\sigma(t))}( b-ax(t-\tau(t))),\,\,t\neq t_{k},\,\,t\in[t_{0}, +\infty)_{\mathbb{T}},\\
x(t_{k}^{+})\geq d_{k}x(t_{k}),\,\,t=t_{k},\,\,k\in\mathbb{N}.
\end{array}\right.
\end{eqnarray}}
 Integrating
system \eqref{e251} from $t_0$ to $t$,  we have
\begin{eqnarray}\label{e2521}
x(t)\geq x(t_0)\exp \bigg(\int_{t_0}^{t}\frac{x(s)}{x(\sigma(s))}(b-ax(s-\tau(s)))\Delta s+\sum\limits_{t_0\leq t_k<t}\ln d_k\bigg).
\end{eqnarray}

For any fixed $\vartheta\in (0,1)$, we only need to consider
the following three cases.
\begin{itemize}
  \item Case I: there is a $t_1 > t_{0}$ such that $x(t-\tau(t)) \geq \frac{\vartheta b}{a}$ for all $t \geq t_1$.
  \item Case II: there is a $t_1 > t_{0}$ such that $x(t-\tau(t)) \leq \frac{\vartheta b}{a}$ for all $t \geq t_1$.
  \item Case III: $x(t-\tau(t))$ is oscillatory about $\frac{\vartheta b}{a}$ for all $t\geq t_0$.
\end{itemize}

Case I. In this case, it follows from \eqref{2.12}  that for $t\geq \max\{t_1,t_0^*+\bar{\tau}\}$,
 \begin{eqnarray*}
 x(t)\geq\frac{\vartheta b\alpha}{a}\exp\bigg\{\frac{\log(1-a(N+\varepsilon)\bar{\mu})}{\bar{\mu}}\bar{\tau}\bigg\}.
   \end{eqnarray*}

Letting $\varepsilon\rightarrow0$ and $\vartheta\rightarrow 1$, we have
\begin{eqnarray}\label{a12}
 \liminf_{t\rightarrow+\infty}x(t)\geq\frac{ b\alpha}{a}\exp\bigg\{\frac{\log(1-aN\bar{\mu})}{\bar{\mu}}\bar{\tau}\bigg\}\geq\frac{ b\alpha^2}{a}\exp\bigg\{\frac{\log(1-aN\bar{\mu})}{\bar{\mu}}\bar{\tau}\bigg\}=m.
   \end{eqnarray}

Case II. In this case, it follows from  \eqref{e25} that $x^\Delta(t)\geq 0$ for $t\geq t_1$. Hence, by \eqref{e2521}, we find
\begin{eqnarray*}\label{e253}
x(t)&\geq &x(t_1+\bar{\tau})\exp \bigg(\frac{x(t_1+\bar{\tau})a}{\vartheta b}\int_{t_1+\bar{\tau}}^{t}(b-ax(s-\tau(s)))\Delta s+\sum\limits_{t_1+\bar{\tau}\leq t_k<t}\ln d_k\bigg)\nonumber\\
&\geq &x(t_1+\bar{\tau})\exp \bigg(\frac{x(t_1+\bar{\tau})a}{\vartheta b}(1-\vartheta)b(t-t_1+\bar{\tau})+\sum\limits_{t_1+\bar{\tau}\leq t_k<t}\ln d_k\bigg)\rightarrow+\infty
\end{eqnarray*}
as $ t\rightarrow+\infty,$ which contradicts $\limsup_{t\rightarrow+\infty}x(t)\leq N$.

Case III. In this case, from the oscillation of $x(t-\tau(t))$ about $\frac{\vartheta b}{a}$, we can choose two sequences $\{s_i\}$ and $\{s_i'\}$
 satisfying
$t_0<s_1<s_1'<\cdots<s_i<s_i'<\cdots$   and $\lim\limits_{i\rightarrow+\infty}s_i=\lim\limits_{i\rightarrow+\infty}s_i'=+\infty$
such that
\[
x(s_i-\tau(s_i)))\leq \frac{\vartheta b}{a}, \,\,\,x(s_i^+-\tau(s_i^+)))\geq \frac{\vartheta b}{a}, \,\,\, x(s_i'-\tau(s_i')))\geq \frac{\vartheta b}{a}, \,\,\,x(s_i'^+-\tau(s_i'^+)))\leq \frac{\vartheta b}{a},
\]
\[
x(t-\tau(t))\geq \frac{\vartheta b}{a}, \,\,\, t\in (s_i,s_i')
\]
and
\[
x(t-\tau(t))\leq \frac{\vartheta b}{a}, \,\,\, t\in (s_i',s_{i+1}).
\]

For any $t\geq t_0$,  if $t\in (s_i',s_{i+1}]$,  then by \eqref{e2521}, we have
\begin{eqnarray}\label{2.16}
x(t)&\geq& x(s_i')\exp \bigg(\int_{s_i'}^{t}\frac{x(s)}{x(\sigma(s))}(b-ax(s-\tau(s)))\Delta s+\sum\limits_{s_i'\leq t_k<t}\ln d_k\bigg)\nonumber\\
&\geq&x(s_i')\prod_{s_i'\leq t_{k}<t}d_{k}\geq \alpha x(s_i').
\end{eqnarray}
Noticing that $x(s_i'-\tau(s_i')))\geq \frac{\vartheta b}{a}$ and according to \eqref{2.12}, for $s_i'\geq \max\{t_0,t_0^*+\bar{\tau}\}$, we have
\begin{eqnarray}\label{2.17}
x(s_i')&\geq&  \alpha x(s_i'-\tau(s_i'))\exp\bigg\{\frac{\log(1-a(N+\varepsilon)\bar{\mu})}{\bar{\mu}}\bar{\tau}\bigg\}\nonumber\\
 &\geq&\frac{\vartheta b\alpha}{a}\exp\bigg\{\frac{\log(1-a(N+\varepsilon)\bar{\mu})}{\bar{\mu}}\bar{\tau}\bigg\}.
   \end{eqnarray}
    Combine \eqref{2.16} and \eqref{2.17},  we have
   \begin{eqnarray}\label{2.18}
x(t)\geq\frac{\vartheta b\alpha^2}{a}\exp\bigg\{\frac{\log(1-a(N+\varepsilon)\bar{\mu})}{\bar{\mu}}\bar{\tau}\bigg\}.
   \end{eqnarray}

For any $t\geq t_0^*+\bar{\tau}$,  if $t\in (s_i,s_{i}']$,  then by \eqref{2.12}, we have
\begin{eqnarray}\label{2.19}
x(t)&\geq&  \alpha x(t-\tau(t))\exp\bigg\{\frac{\log(1-a(N+\varepsilon)\bar{\mu})}{\bar{\mu}}\bar{\tau}\bigg\}\nonumber\\
 &\geq&\frac{\vartheta b\alpha}{a}\exp\bigg\{\frac{\log(1-a(N+\varepsilon)\bar{\mu})}{\bar{\mu}}\bar{\tau}\bigg\}.
   \end{eqnarray}
Letting $\varepsilon\rightarrow 0$ and $\vartheta\rightarrow 1$, from \eqref{2.18} and \eqref{2.19}, we have
  \begin{eqnarray}\label{2.17}
\liminf_{i\rightarrow+\infty}x(t)\geq \frac{b\alpha^2}{a}\exp\bigg\{\frac{\log(1-aN\bar{\mu})}{\bar{\mu}}\bar{\tau}\bigg\}.
   \end{eqnarray}
From the above discussion, we see that Case II cannot occur. So, it follows from \eqref{a12} and \eqref{2.17} that
 \begin{eqnarray*}
 \liminf_{i\rightarrow+\infty}x(t)\geq\frac{ b\alpha^2}{a}\exp\bigg\{\frac{\log(1-aN\bar{\mu})}{\bar{\mu}}\bar{\tau}\bigg\}=m.
   \end{eqnarray*}The proof is complete.
\end{proof}

Similarly, we can easily obtain the following results.

\begin{lemma}
Assume that $x\in PC^{1}_{rd}[\mathbb{T}, \mathbb{R}]$, $x(t)>0$ on $\mathbb{T}, b, d\geq0, a> 0, t-\tau(t)\in\mathbb{T}$ for $t\in \mathbb{T}, 0<d_k\leq 1,b_k\leq 0$, where $\tau:\mathbb{T}\rightarrow\mathbb{R}^{+}$ is a rd-continuous function  and $\bar{\tau}=\sup\limits_{t\in \mathbb{T}}\{\tau(t)\}, k\in \mathbb{N}$, and there exist positive constants $\alpha,\beta$ such that $\alpha\leq\prod_{t_{0}<t_{k}<t}d_{k}\leq \beta$ for $t\in \mathbb{T}$. Then the following hold:
\begin{itemize}
    \item  [$(i)$] If
     {\setlength\arraycolsep{2pt}
\begin{eqnarray*}
 \left\{%
\begin{array}{lcrcl}
x^{\Delta}(t)\leq x(t)(b-ax(t-\tau(t)))+d,\,\,t\neq t_{k},\,\,t\in[t_{0}, +\infty)_{\mathbb{T}},\\
x(t_{k}^{+})\leq d_{k}x(t_{k})+b_{k},\,\,t=t_{k},\,\,k\in\mathbb{N},
\end{array}\right.
\end{eqnarray*}}
where $t_{0}\in{\mathbb{T}}$, with initial condition
\begin{eqnarray*}
x(t; t_{0}, \phi_{0})=\phi_{0}(t),\,\,t\in[t_{0}-\bar{\tau}, t_{0}]_{\mathbb{T}},
\end{eqnarray*}
where $\phi_0\in C([t_{0}-\bar{\tau}, t_{0}],(0,+\infty))$, then
    \begin{eqnarray*}
 \limsup_{t\rightarrow+\infty}x(t)\leq -\frac{d\beta}{b}+\bigg(\frac{d\beta}{b}+ \bar{x}\beta\bigg)e^{b\bar{\tau}}:=M,
   \end{eqnarray*}
where $\bar{x}$ is the unique positive root of $x(ax-b)-d=0$.

Especially, if $d=0$, then
$M=\frac{b\beta}{a}e^{b\bar{\tau}}.$

 \item  [$(ii)$] If {\setlength\arraycolsep{2pt}
\begin{eqnarray*}
 \left\{%
\begin{array}{lcrcl}
x^{\Delta}(t)\geq x^{\sigma}(t)(b-ax(t-\tau(t)))+d,\,\,t\neq t_{k},\,\,t\in[t_{0}, +\infty)_{\mathbb{T}},\\
x(t_{k}^{+})\geq d_{k}x(t_{k})+b_{k},\,\,t=t_{k},\,\,k\in\mathbb{N},
\end{array}\right.
\end{eqnarray*}}
where $t_{0}\in{\mathbb{T}}$, $b_{k}\geq 0$, with initial condition
\begin{eqnarray*}
x(t; t_{0}, \phi_{0})=\phi_{0}(t),\,\,t\in[t_{0}-\bar{\tau}, t_{0}]_{\mathbb{T}},
\end{eqnarray*} and there exists a positive constant $N$ such that
 $0\leq\limsup_{t\rightarrow+\infty}x(t)\leq \tilde{N}<+\infty$, then
    \begin{eqnarray*}
 \liminf_{t\rightarrow+\infty}x(t)\geq \frac{b\alpha^2}{a}\exp\bigg\{\frac{-\log(1+aN\bar{\mu})}{\bar{\mu}}\bar{\tau}\bigg\}:=\tilde{m}.
   \end{eqnarray*}
\end{itemize}
\end{lemma}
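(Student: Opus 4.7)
The plan is to mirror the proof of Lemma \ref{lem211}, adjusting for the fact that in part (i) the $x^\sigma(t)$ factor of Lemma \ref{lem211}(i) is replaced by $x(t)$, while in part (ii) the $x(t)$ factor of Lemma \ref{lem211}(ii) is replaced by $x^\sigma(t)$. These swaps only change the specific regressive function that appears inside the generalized exponential estimate, leaving the overall logical skeleton intact.

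For part (i), I would argue by contradiction exactly as in Lemma \ref{lem211}(i): assuming $\limsup_{t\to+\infty}x(t)=+\infty$, extract a sequence $\{s_i\}\subset[t_0,+\infty)_{\mathbb{T}}\setminus\{t_k\}$ with $x(s_i)\geq \bar{x}+i$ and $x^\Delta(s_i)\geq 0$, whence evaluating the inequality at $t=s_i$ gives $x(s_i-\tau(s_i))\leq\bar{x}$. The crucial simplification compared with Lemma \ref{lem211}(i) is that because the right-hand side now contains $x(t)$ instead of $x^\sigma(t)$, the bound $x^\Delta(t)\leq bx(t)+d$ is immediate and no $(1-b\bar{\mu})$ rearrangement is required. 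Lemma \ref{lem27} then yields
\[
x(t)\leq -\frac{d\beta}{b}+\Bigl(\frac{d\beta}{b}+x(t_0^\ast)\beta\Bigr)e_b(t,t_0^\ast),
\]
and the elementary bound $\xi_\mu(b)=\log(1+b\mu)/\mu\leq b$ gives $e_b(s_i,s_i-\tau(s_i))\leq e^{b\bar{\tau}}$, producing the desired contradiction and the claimed $M$.

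For part (ii), fix $\varepsilon>0$ small enough so that $x(t-\tau(t))\leq N+\varepsilon$ for all $t$ sufficiently large; the hypothesis then yields $x^\Delta(t)\geq -a(N+\varepsilon)\,x^\sigma(t)$. The key algebraic step is to substitute $x^\sigma=x+\mu x^\Delta$ and solve for $x^\Delta$, which recasts the inequality as
\[
x^\Delta(t)\geq \bigl(\ominus a(N+\varepsilon)\bigr)(t)\,x(t),\qquad \ominus a(N+\varepsilon)=-\frac{a(N+\varepsilon)}{1+a(N+\varepsilon)\mu}.
\]
Lemma \ref{lem27} then gives $x(t)\geq \alpha\, x(t-\tau(t))\, e_{\ominus a(N+\varepsilon)}(t,t-\tau(t))$. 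Computing $\xi_\mu(\ominus a(N+\varepsilon))=-\log(1+a(N+\varepsilon)\mu)/\mu$ and bounding it on $[0,\bar{\mu}]$ yields the exponential estimate
\[
e_{\ominus a(N+\varepsilon)}(t,t-\tau(t))\geq \exp\Bigl\{\frac{-\log(1+a(N+\varepsilon)\bar{\mu})}{\bar{\mu}}\bar{\tau}\Bigr\}.
\]
With this in hand, the rest is the same three-case analysis as in Lemma \ref{lem211}(ii): either $x(t-\tau(t))$ eventually stays above $\vartheta b/a$ (apply the exponential estimate directly), stays below $\vartheta b/a$ (derive a contradiction by integrating $(\ln x)^\Delta$ via Lemma \ref{lemd12} and using $\limsup x\leq N$), or oscillates about $\vartheta b/a$ (bound $x$ on each oscillation interval by combining $\alpha$ with the exponential estimate). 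Letting $\varepsilon\to 0$ and $\vartheta\to 1$ then produces $\tilde{m}$.

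The main technical obstacle is the algebraic conversion from $x^\Delta\geq -p\,x^\sigma$ to $x^\Delta\geq(\ominus p)\,x$ and the accompanying lower bound on $e_{\ominus p}$ over the delay interval, since the direction of monotonicity of $\xi_\mu(\ominus p)$ in $\mu$ is opposite to that of $\xi_\mu(-p)$ analysed in Lemma \ref{lem211}(ii); once the bookkeeping on these exponential estimates is settled, the three-case argument carries over essentially verbatim.
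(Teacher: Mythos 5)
Your part (i) is fine: replacing $x^\sigma$ by $x$ on the right-hand side removes the need for the $(1-b\bar{\mu})$ rearrangement, the contradiction argument with the sequence $\{s_i\}$ goes through as in Lemma \ref{lem211}(i), and the bound $\xi_{\mu}(b)\leq b$ gives $e_{b}(s_i,s_i-\tau(s_i))\leq e^{b\bar{\tau}}$ and hence the stated $M$. (The paper itself offers no proof of this lemma beyond ``similarly,'' so this is exactly the intended adaptation.)

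In part (ii), however, your key exponential estimate is wrong, and you in fact flag the reason yourself without drawing the consequence. The conversion $x^{\Delta}\geq -p\,x^{\sigma}\Rightarrow x^{\Delta}\geq(\ominus p)x$ with $p=a(N+\varepsilon)$ is correct, and $\xi_{\mu}(\ominus p)=-\log(1+p\mu)/\mu$ for $\mu>0$ (and $=-p$ for $\mu=0$). But $\mu\mapsto-\log(1+p\mu)/\mu$ is \emph{increasing} on $(0,\bar{\mu}]$ with limit $-p$ as $\mu\to0^{+}$, so
\[
\inf_{\mu\in[0,\bar{\mu}]}\xi_{\mu}(\ominus p)=-p\neq\frac{-\log(1+p\bar{\mu})}{\bar{\mu}},
\]
and the worst case now sits at $\mu=0$, not at $\mu=\bar{\mu}$ as in Lemma \ref{lem211}(ii). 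Consequently all the argument yields is
\[
e_{\ominus p}(t,t-\tau(t))=\frac{1}{e_{p}(t,t-\tau(t))}\geq e^{-p\bar{\tau}},
\]
which is strictly smaller than $\exp\bigl\{\tfrac{-\log(1+p\bar{\mu})}{\bar{\mu}}\bar{\tau}\bigr\}$ whenever $\bar{\mu}>0$. The claimed inequality genuinely fails on time scales of non-constant graininess: if $[t-\tau(t),t]_{\mathbb{T}}$ is a real interval but $\bar{\mu}=1$ elsewhere, then $e_{\ominus p}(t,t-\tau(t))=e^{-p\tau(t)}$, while your claimed lower bound at $\tau(t)=\bar{\tau}=1$ is $(1+p)^{-1}>e^{-p}$. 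So the three-case analysis, which is otherwise sound, only delivers $\liminf_{t\to+\infty}x(t)\geq\frac{b\alpha^{2}}{a}e^{-aN\bar{\tau}}$. To obtain the stated $\tilde{m}$ you would need $\mu(t)\equiv\bar{\mu}$ (e.g.\ $\mathbb{T}=h\mathbb{Z}$, or $\mathbb{T}=\mathbb{R}$ with the limiting interpretation); for general $\mathbb{T}$ either the constant in the lemma must be weakened to $e^{-aN\bar{\tau}}$ or an extra hypothesis on the graininess must be added. (Separately, note the statement's $\tilde{N}$/$N$ mismatch and the degenerate $d\beta/b$ when $b=0$, but those are defects of the statement, not of your argument.)
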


\section{Permanence}

\setcounter{equation}{0}
{\setlength\arraycolsep{2pt}}
 \indent

In this section, we will give our main results about the permanence of system $\eqref{e13}$. For convenience, we introduce the following notations: for $i=1,2,\ldots,n$, $j=1,2,\ldots,m$,
\begin{eqnarray*}
&&x_{i}^{\vee}=\ln\bigg(\frac{b_{i}^{U}}{a_{ii}^{L}}\exp\bigg\{\frac{b_{i}^{U}\tau^{+}}{1-b_{i}^{U}\bar{\mu}}\bigg\}\bigg),\\
&&y_{j}^\vee=\ln\bigg\{\frac{1}{e_{jj}^{L}}\sum\limits_{l=1}^nd_{jl}^U\exp\{x_l^\vee\}\exp\bigg\{\frac{\eta^{+}}{\bigg(1/\sum\limits_{l=1}^nd_{jl}^U
 \exp\{x_l^\vee\}\bigg)-\bar{\mu}}\bigg\}\bigg\},\\
&&x_{i}^{\wedge}=\ln\bigg\{\frac{r^2}{a_{ii}^U}\bigg(b_i^L-\sum\limits_{l=1,l\neq i}^na_{il}^U\exp\{x_{l}^{\vee}\}-\sum\limits_{h=1}^mc_{ih}^U\exp\{y_{h}^{\vee}\}\bigg)
\exp\bigg\{\frac{\log(1-a_{ii}^Ue^{x_{i}^{\vee}}\bar{\mu})}{\bar{\mu}}\tau^{+}\bigg\}\bigg\},\\
&&y_{j}^{\wedge}=\ln\bigg\{\frac{r^2}{e_{jj}^U}\bigg(\sum\limits_{l=1}^nd_{jl}^L\exp\{x_l^\wedge\}-r_j^U
-\sum\limits_{h=1,h\neq j}^me_{jh}^U\exp\{y_{h}^{\vee}\}\bigg)
\exp\bigg\{\frac{\log(1-e_{jj}^Ue^{y_{j}^{\vee}}\bar{\mu})}{\bar{\mu}}\eta^{+}\bigg\}\bigg\}.
\end{eqnarray*}
\begin{enumerate}
  \item [$(H_{4})$]
  $-b_{i}^{U},-\sum\limits_{l=1}^nd_{jl}^U\exp\{x_l^\vee\}, -a_{ii}^Ue^{x_{i}^{\vee}},-e_{jj}^Ue^{y_{j}^{\vee}}\in\mathcal{R}^{+}$ and for $i=1,2,\ldots,n$, $j=1,2,\ldots,m$,
   \[
 b_{i}^{U}\exp\bigg\{\frac{b_{i}^{U}\tau^{+}}{1-b_{i}^{U}\bar{\mu}}\bigg\}>a_{ii}^{L},\,\,\sum\limits_{l=1}^nd_{jl}^U\exp\{x_l^\vee\}\exp\bigg\{\frac{\eta^{+}}{\bigg(1/\sum\limits_{l=1}^nd_{jl}^U
 \exp\{x_l^\vee\}\bigg)-\bar{\mu}}\bigg\}>e_{jj}^{L},\]
 \[ r^2\bigg(b_i^L-\sum\limits_{l=1,l\neq i}^na_{il}^U\exp\{x_{l}^{\vee}\}-\sum\limits_{h=1}^mc_{ih}^U\exp\{y_{h}^{\vee}\}\bigg)
\exp\bigg\{\frac{\log(1-a_{ii}^Ue^{x_{i}^{\vee}}\bar{\mu})}{\bar{\mu}}\tau^{+}\bigg\}>a_{ii}^U,\]
\[r^2\bigg(\sum\limits_{l=1}^nd_{jl}^L\exp\{x_l^\wedge\}-r_j^U
-\sum\limits_{h=1,h\neq j}^me_{jh}^U\exp\{y_{h}^{\vee}\}\bigg)
\exp\bigg\{\frac{\log(1-e_{jj}^Ue^{y_{j}^{\vee}}\bar{\mu})}{\bar{\mu}}\eta^{+}\bigg\}>e_{jj}^U.\]
\end{enumerate}

\begin{lemma}\label{lem31}
Assume that $(H_1)$--$(H_4)$ hold. Then every solution $(x(t), y(t))=(x_{1}(t), x_{2}(t), \ldots,\\ x_{n}(t),y_{1}(t), y_{2}(t), \ldots,y_{m}(t))$ of system \eqref{e13} satisfies
\begin{eqnarray*}
x_{i}^{\wedge}\leq\liminf_{t\rightarrow+\infty}x_{i}(t)\leq\limsup_{t\rightarrow+\infty}x_{i}(t)\leq x_{i}^{\vee},\,\,i=1,2,\ldots,n,\\
y_{j}^{\wedge}\leq\liminf_{t\rightarrow+\infty}y_{j}(t)\leq\limsup_{t\rightarrow+\infty}y_{j}(t)\leq y_{j}^{\vee},\,\,j=1,2,\ldots,m.
 \end{eqnarray*}
\end{lemma}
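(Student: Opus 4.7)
\noindent\textbf{Proof proposal for Lemma \ref{lem31}.} The plan is to reduce the estimation of $x_i(t)$ and $y_j(t)$ to an application of the two parts of Lemma \ref{lem211} by passing to the exponential variables $u_i(t):=\exp\{x_i(t)\}$ and $v_j(t):=\exp\{y_j(t)\}$. Under this substitution the impulsive conditions in \eqref{e13} become $u_i(t_k^+)=(1+\lambda_{ik})u_i(t_k)$ and $v_j(t_k^+)=(1+\lambda_{jk})v_j(t_k)$, so the multiplicative factors $d_k=1+\lambda_{ik}$ (resp.\ $1+\lambda_{jk}$) satisfy $0<d_k\le 1$ and $b_k=0$, while $(H_2)$ gives $r\le\prod_{t_0<t_k<t}d_k\le 1$, which supplies the constants $\alpha=r$ and $\beta=1$ demanded by Lemma \ref{lem211}.

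The bridge between the $\Delta$-derivative of $x_i$ (which is what \eqref{e13} controls) and the $\Delta$-derivative of $u_i$ (which is what Lemma \ref{lem211} controls) is Lemma \ref{lemd12}: since $u_i>0$,
\begin{equation*}
\frac{u_i^{\Delta}(t)}{u_i^{\sigma}(t)}\le x_i^{\Delta}(t)\le\frac{u_i^{\Delta}(t)}{u_i(t)},
\end{equation*}
so plugging in the right-hand side of \eqref{e13} produces, after dropping the (nonnegative) non-diagonal couplings, the two one-sided inequalities
\begin{equation*}
u_i^{\Delta}(t)\le u_i^{\sigma}(t)\bigl[b_i^{U}-a_{ii}^{L}u_i(t-\tau_{ii}(t))\bigr],\qquad
u_i^{\Delta}(t)\ge u_i(t)\bigl[b_i(t)-\sum_{l}a_{il}(t)u_l(t-\tau_{il}(t))-\sum_{h}c_{ih}(t)v_h(t-\delta_{ih}(t))\bigr],
\end{equation*}
and similarly for $v_j$. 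This is exactly the shape required by Lemma \ref{lem211}(i) (with $d=0$) and Lemma \ref{lem211}(ii).

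The work then proceeds in four cascading steps in which $(H_4)$ is used at each stage to guarantee the positivity/regressivity hypotheses of Lemma \ref{lem211}. \emph{Step 1:} apply Lemma \ref{lem211}(i) to the first inequality above; this yields $\limsup u_i(t)\le \exp\{x_i^{\vee}\}$ and, via $\limsup x_i=\ln\limsup u_i$, the desired upper bound $\limsup x_i(t)\le x_i^{\vee}$. \emph{Step 2:} for any $\varepsilon>0$, once $u_l(t)\le e^{x_l^{\vee}}+\varepsilon$ holds for all large $t$, the $v_j$ equation gives $v_j^{\Delta}(t)\le v_j^{\sigma}(t)\bigl[\sum_l d_{jl}^{U}(e^{x_l^{\vee}}+\varepsilon)-e_{jj}^{L}v_j(t-\eta_{jj}(t))\bigr]$, to which Lemma \ref{lem211}(i) again applies; letting $\varepsilon\to0$ gives $\limsup y_j(t)\le y_j^{\vee}$. \emph{Step 3:} inserting the now-established upper bounds (plus a small $\varepsilon$) into the lower estimate for $u_i^{\Delta}$ yields $u_i^{\Delta}(t)\ge u_i(t)[\tilde b_i^{\varepsilon}-a_{ii}^{U}u_i(t-\tau_{ii}(t))]$ where $\tilde b_i^{\varepsilon}\to b_i^{L}-\sum_{l\ne i}a_{il}^{U}e^{x_l^{\vee}}-\sum_h c_{ih}^{U}e^{y_h^{\vee}}$; Lemma \ref{lem211}(ii) with $N=e^{x_i^{\vee}}$ produces $\liminf x_i(t)\ge x_i^{\wedge}$ upon sending $\varepsilon\to0$. \emph{Step 4:} the same scheme applied to the $v_j$ equation, using Step 3's lower bound on $u_l$ and Step 2's upper bound on $v_h$, delivers $\liminf y_j(t)\ge y_j^{\wedge}$.

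The main obstacle is organising this bootstrap cleanly on time scales: one must handle each inequality only for $t$ larger than some $T$ depending on $\varepsilon$ (so that the asymptotic bounds obtained in the previous step are already in force on the delayed arguments $t-\tau_{il}(t)$, $t-\delta_{ih}(t)$, etc.\ — this is where $\hat{\tau}$ and $\hat{\eta}$ enter), and one must check at every application that the coefficients of the auxiliary one-dimensional inequality lie in $\mathcal{R}^{+}$ and satisfy the ``$1-aN\bar{\mu}>0$'' hypothesis of Lemma \ref{lem211}(ii); both requirements are precisely the content of $(H_4)$. Once the bookkeeping is in place, the four steps line up to give exactly the constants $x_i^{\vee},y_j^{\vee},x_i^{\wedge},y_j^{\wedge}$ defined before the statement, completing the proof.
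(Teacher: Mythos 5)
Your proposal matches the paper's own proof essentially step for step: the same substitution $N_i=e^{x_i}$, $K_j=e^{y_j}$, the same use of Lemma \ref{lemd12} to convert the logarithmic derivative bounds into the form required by Lemma \ref{lem211}, and the same four-stage bootstrap (upper bound on $x_i$, then on $y_j$, then lower bound on $x_i$, then on $y_j$) with $\varepsilon\to 0$ at each stage and $\alpha=r$, $\beta=1$ supplied by $(H_2)$. The approach and the resulting constants are identical to those in the paper.
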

\begin{proof}
From the first equation of $\eqref{e13}$, we have
\begin{eqnarray*}
x_i^\Delta(t)\leq b_i^U-a_{ii}^L\exp\{x_i(t-\tau_{ii}(t))\},\,\,i=1,2,\ldots,n.
\end{eqnarray*}
Let $N_{i}(t)=e^{x_{i}(t)}$, obviously $N_{i}(t)>0$, above inequality yields that
\begin{eqnarray*}
[\ln(N_{i}(t))]^{\Delta}\leq b_i^U-a_{ii}^LN_{i}(t-\tau_{ii}(t)),\,\,i=1,2,\ldots,n.
\end{eqnarray*}
In view of Lemma \ref{lemd12}, we have
\begin{eqnarray*}
\frac{N_{i}^{\Delta}(t)}{N_{i}^{\sigma}(t)}\leq [\ln(N_{i}(t))]^{\Delta}\leq b_i^U-a_{ii}^LN_{i}(t-\tau_{ii}(t)),\,\,i=1,2,\ldots,n,
\end{eqnarray*}
 then
\begin{eqnarray*}
N_{i}^{\Delta}(t)\leq N_{i}^{\sigma}(t)[b_i^U-a_{ii}^LN_{i}(t-\tau_{ii}(t))],\,\,i=1,2,\ldots,n.
\end{eqnarray*}
Thus
\begin{eqnarray*}
 \left\{%
\begin{array}{lcrcl}
N_{i}^{\Delta}(t)\leq N_{i}^{\sigma}(t)[b_i^U-a_{ii}^LN_{i}(t-\tau_{ii}(t))],\,\,t\neq t_{k},\,\,t\in J,\\
N_{i}(t_{k}^{+})\leq (1+\lambda_{ik})N_{i}(t_{k}),\,\,t=t_{k},\,\,k\in\mathbb{N},\,\,i=1,2,\ldots,n.
\end{array}\right.
\end{eqnarray*}
In view of Lemma \ref{lem211}, we have
\begin{eqnarray*}
 \limsup_{t\rightarrow+\infty}N_{i}(t)\leq \frac{b_{i}^{U}}{a_{ii}^{L}}\exp\bigg\{\frac{b_{i}^{U}\tau^{+}}{1-b_{i}^{U}\bar{\mu}}\bigg\},\,\,i=1,2,\ldots,n,
   \end{eqnarray*}
that is,
\begin{eqnarray}\label{e31}
\limsup_{t\rightarrow+\infty}x_{i}(t)\leq \ln\bigg(\frac{b_{i}^{U}}{a_{ii}^{L}}\exp\bigg\{\frac{b_{i}^{U}\tau^{+}}{1-b_{i}^{U}\bar{\mu}}\bigg\}\bigg):=x_{i}^{\vee},\,\,i=1,2,\ldots,n.
\end{eqnarray}

For any positive constant $\varepsilon$ small enough, it follows from \eqref{e31} that there exists large enough $T_{1}$ such that
\begin{eqnarray}\label{e32}
x_{i}(t)\leq x_{i}^{\vee}+\varepsilon,\,\,{\rm for}\,\,t\geq T_{1},\,\,i=1,2,\ldots,n.
\end{eqnarray}

According to \eqref{e13} and \eqref{e32}, we have
\begin{eqnarray*}
y_j^\Delta(t)\leq \sum\limits_{l=1}^nd_{jl}^U\exp\{x_l^\vee+\varepsilon\}-e_{jj}^L\exp\{y_j(t-\eta_{jj}(t))\},\,\,j=1,2,\ldots,m.
\end{eqnarray*}
Let $K_{j}(t)=e^{y_{j}(t)}$, obviously $K_{j}(t)>0$, above inequality yields that
\begin{eqnarray*}
[\ln(K_{j}(t))]^{\Delta}\leq \sum\limits_{l=1}^nd_{jl}^U\exp\{x_l^\vee+\varepsilon\}-e_{jj}^LK_{j}(t-\eta_{jj}(t)),\,\,j=1,2,\ldots,m.
\end{eqnarray*}
In view of Lemma \ref{lemd12}, we have
\begin{eqnarray*}
\frac{K_{j}^{\Delta}(t)}{K_{j}^{\sigma}(t)}\leq \sum\limits_{l=1}^nd_{jl}^U\exp\{x_l^\vee+\varepsilon\}-e_{jj}^LK_{j}(t-\eta_{jj}(t)),\,\,j=1,2,\ldots,m,
\end{eqnarray*}
 then
\begin{eqnarray*}
K_{j}^{\Delta}(t)\leq K_{j}^{\sigma}(t)\bigg[\sum\limits_{l=1}^nd_{jl}^U\exp\{x_l^\vee+\varepsilon\}-e_{jj}^LK_{j}(t-\eta_{jj}(t))\bigg],\,\,j=1,2,\ldots,m.
\end{eqnarray*}
Thus
\begin{eqnarray*}
 \left\{%
\begin{array}{lcrcl}
K_{j}^{\Delta}(t)\leq K_{j}^{\sigma}(t)\bigg[\sum\limits_{l=1}^nd_{jl}^U\exp\{x_l^\vee+\varepsilon\}-e_{jj}^LK_{j}(t-\eta_{jj}(t))\bigg],\,\,t\neq t_{k},\,\,t\in J,\\
K_{j}(t_{k}^{+})\leq (1+\lambda_{jk})K_{j}(t_{k}),\,\,t=t_{k},\,\,k\in\mathbb{N},\,\,j=1,2,\ldots,m.
\end{array}\right.
\end{eqnarray*}
By Lemma \ref{lem211}, we have
\begin{eqnarray*}
 \limsup_{t\rightarrow+\infty}K_{j}(t)\leq \frac{1}{e_{jj}^{L}}\sum\limits_{l=1}^nd_{jl}^U\exp\{x_l^\vee+\varepsilon\}\exp\bigg\{\frac{\eta^{+}}{(1/\sum\limits_{l=1}^nd_{jl}^U
 \exp\{x_l^\vee+\varepsilon\})-\bar{\mu}}\bigg\},\,\,j=1,2,\ldots,m,
   \end{eqnarray*}
that is,
\begin{eqnarray*}
\limsup_{t\rightarrow+\infty}y_{j}(t)\leq \ln\bigg\{\frac{1}{e_{jj}^{L}}\sum\limits_{l=1}^nd_{jl}^U\exp\{x_l^\vee+\varepsilon\}\exp\bigg\{\frac{\eta^{+}}{(1/\sum\limits_{l=1}^nd_{jl}^U
 \exp\{x_l^\vee+\varepsilon\})-\bar{\mu}}\bigg\}\bigg\},
\end{eqnarray*}
letting $\varepsilon\rightarrow0$, then for $j=1,2,\ldots,m$,
\begin{eqnarray}\label{e33}
\limsup_{t\rightarrow+\infty}y_{j}(t)\leq \ln\bigg\{\frac{1}{e_{jj}^{L}}\sum\limits_{l=1}^nd_{jl}^U\exp\{x_l^\vee\}\exp\bigg\{\frac{\eta^{+}}{(1/\sum\limits_{l=1}^nd_{jl}^U
 \exp\{x_l^\vee\})-\bar{\mu}}\bigg\}\bigg\}
 =y_{j}^\vee.
\end{eqnarray}

For any positive constant $\varepsilon$ small enough, it follows from \eqref{e33} that there exists large enough $T_{2}$ such that
\begin{eqnarray}\label{e34}
y_{j}(t)\leq y_{j}^{\vee}+\varepsilon,\,\,{\rm for}\,\,t\geq T_{2},\,\,j=1,2,\ldots,m.
\end{eqnarray}
By \eqref{e13}, \eqref{e32} and \eqref{e34}, for $i=1,2,\ldots,n$, we arrive at
\begin{eqnarray*}
x_i^\Delta(t)\geq b_i^L-a_{ii}^U\exp\{x_i(t-\tau_{ii}(t))\}-\sum\limits_{l=1,l\neq i}^na_{il}^U\exp\{x_{l}^{\vee}+\varepsilon\}-\sum\limits_{h=1}^mc_{ih}^U\exp\{y_{h}^{\vee}+\varepsilon\}.
\end{eqnarray*}
Let $\tilde{N}_{i}(t)=e^{x_i(t)}$, obviously $\tilde{N}_{i}(t)>0$, above inequality yields that for $i=1,2,\ldots,n$,
\begin{eqnarray*}
[\ln(\tilde{N}_{i}(t))]^{\Delta}\geq b_i^L-\sum\limits_{l=1,l\neq i}^na_{il}^U\exp\{x_{l}^{\vee}+\varepsilon\}-\sum\limits_{h=1}^mc_{ih}^U\exp\{y_{h}^{\vee}+\varepsilon\}
-a_{ii}^U\tilde{N}_i(t-\tau_{ii}(t)).
\end{eqnarray*}
In view of Lemma \ref{lemd12}, for $i=1,2,\ldots,n$, we have
\begin{eqnarray*}
\frac{\tilde{N}_{i}^{\Delta}(t)}{\tilde{N}_{i}(t)}\geq[\ln(\tilde{N}_{i}(t))]^{\Delta}\geq b_i^L-\sum\limits_{l=1,l\neq i}^na_{il}^U\exp\{x_{l}^{\vee}+\varepsilon\}-\sum\limits_{h=1}^mc_{ih}^U\exp\{y_{h}^{\vee}+\varepsilon\}-a_{ii}^U \tilde{N}_i(t-\tau_{ii}(t)),
\end{eqnarray*}
 then
\begin{eqnarray*}
\tilde{N}_{i}^{\Delta}(t)\geq \tilde{N}_{i}(t)\bigg[b_i^L-\sum\limits_{l=1,l\neq i}^na_{il}^U\exp\{x_{l}^{\vee}+\varepsilon\}-\sum\limits_{h=1}^mc_{ih}^U\exp\{y_{h}^{\vee}+\varepsilon\}-a_{ii}^U \tilde{N}_i(t-\tau_{ii}(t))\bigg].
\end{eqnarray*}
Thus
\begin{eqnarray*}
 \left\{%
\begin{array}{lcrcl}
\tilde{N}_{i}^{\Delta}(t)\geq \tilde{N}_{1}(t)\bigg[b_i^L-\sum\limits_{l=1,l\neq i}^na_{il}^U\exp\{x_{l}^{\vee}+\varepsilon\}-\sum\limits_{h=1}^mc_{ih}^U\exp\{y_{h}^{\vee}+\varepsilon\}\\
\quad\quad\quad\quad\,\,-a_{ii}^U\tilde{N}_i(t-\tau_{ii}(t))\bigg],\,\,t\neq t_{k},\,\,t\in J,\\
\tilde{N}_{i}(t_{k}^{+})\geq (1+\lambda_{ik})\tilde{N}_{i}(t_{k}),\,\,t=t_{k},\,\,k\in\mathbb{N},\,\,i=1,2,\ldots,n.
\end{array}\right.
\end{eqnarray*}
According to Lemma \ref{lem211}, we have
\begin{eqnarray*}
 &&\liminf_{t\rightarrow+\infty}\tilde{N}_{i}(t)\\&\geq& \frac{r^2}{a_{ii}^U}\bigg(b_i^L-\sum\limits_{l=1,l\neq i}^na_{il}^U\exp\{x_{l}^{\vee}+\varepsilon\}-\sum\limits_{h=1}^mc_{ih}^U\exp\{y_{h}^{\vee}+\varepsilon\}\}\bigg)
 \exp\bigg\{\frac{\log(1-a_{ii}^Ue^{x_{i}^{\vee}}\bar{\mu})}{\bar{\mu}}\tau^{+}\bigg\},
   \end{eqnarray*}
that is,
\begin{eqnarray*}
&&\liminf_{t\rightarrow+\infty}x_{i}(t)\\&\geq&\ln\bigg[\frac{r^2}{a_{ii}^U}\bigg(b_i^L-\sum\limits_{l=1,l\neq i}^na_{il}^U\exp\{x_{l}^{\vee}+\varepsilon\}-\sum\limits_{h=1}^mc_{ih}^U\exp\{y_{h}^{\vee}+\varepsilon\}\bigg)
\exp\bigg\{\frac{\log(1-a_{ii}^Ue^{x_{i}^{\vee}}\bar{\mu})}{\bar{\mu}}\tau^{+}\bigg\}\bigg],
\end{eqnarray*}
letting $\varepsilon\rightarrow0$, then we have
\begin{eqnarray}\label{e35}
\liminf_{t\rightarrow+\infty}x_{i}(t) \geq  x_{i}^{\wedge},\,\,i=1,2,\ldots,n.
\end{eqnarray}

For any positive constant $\varepsilon$ small enough, it follows from \eqref{e35} that there exists large enough $T_{3}$ such that
\begin{eqnarray}\label{e36}
x_{i}(t)\geq x_{i}^{\wedge}-\varepsilon,\,\,{\rm for}\,\,t\geq T_{3},\,\,i=1,2,\ldots,n.
\end{eqnarray}
By  \eqref{e13}, \eqref{e35} and \eqref{e36}, we have
\begin{eqnarray*}
y_j^\Delta(t)\geq \sum\limits_{l=1}^nd_{jl}^L\exp\{x_l^\wedge-\varepsilon\}-e_{jj}^U\exp\{y_j(t-\eta_{jj}(t))\}-r_j^U
-\sum\limits_{h=1,h\neq j}^me_{jh}^U\exp\{y_{h}^{\vee}+\varepsilon\},
\end{eqnarray*}
let $\tilde{K}_{j}(t)=e^{y_{j}(t)}$, obviously $\tilde{K}_{j}(t)>0$, above inequality yields that for $j=1,2,\ldots,m$,
\begin{eqnarray*}
[\ln(\tilde{K}_{j}(t))]^{\Delta}\geq \sum\limits_{l=1}^nd_{jl}^L\exp\{x_l^\wedge-\varepsilon\}-r_j^U
-\sum\limits_{h=1,h\neq j}^me_{jh}^U\exp\{y_{h}^{\vee}+\varepsilon\}-e_{jj}^U\tilde{K}_j(t-\eta_{jj}(t)).
\end{eqnarray*}
From Lemma \ref{lemd12}, for $j=1,2,\ldots,m$, we have
\begin{eqnarray*}
\tilde{K}_{j}^{\Delta}(t)\geq\tilde{K}_{j}(t)\bigg[\sum\limits_{l=1}^nd_{jl}^L\exp\{x_l^\wedge-\varepsilon\}-r_j^U
-\sum\limits_{h=1,h\neq j}^me_{jh}^U\exp\{y_{h}^{\vee}+\varepsilon\}-e_{jj}^U\tilde{K}_j(t-\eta_{jj}(t))\bigg].
\end{eqnarray*}
Thus
\begin{eqnarray*}
 \left\{%
\begin{array}{lcrcl}
\tilde{K}_{j}^{\Delta}(t)\geq\tilde{K}_{j}(t)\bigg[\sum\limits_{l=1}^nd_{jl}^L\exp\{x_l^\wedge-\varepsilon\}-r_j^U
-\sum\limits_{h=1,h\neq j}^me_{jh}^U\exp\{y_{h}^{\vee}+\varepsilon\}\\
\quad\quad\quad\quad\,\,-e_{jj}^U\tilde{K}_j(t-\eta_{jj}(t))\bigg],\,\,t\neq t_{k},\,\,t\in J,\\
\tilde{K}_{j}(t_{k}^{+})\geq (1+\lambda_{jk})\tilde{K}_{j}(t_{k}),\,\,t=t_{k},\,\,k\in\mathbb{N},\,\,j=1,2,\ldots,m.
\end{array}\right.
\end{eqnarray*}
In view of Lemma \ref{lem211}, we have
\begin{eqnarray*}
 &&\liminf_{t\rightarrow+\infty}\tilde{K}_{j}(t)\\&\geq& \frac{r^2}{e_{jj}^U}\bigg(\sum\limits_{l=1}^nd_{jl}^L\exp\{x_l^\wedge-\varepsilon\}-r_j^U
-\sum\limits_{h=1,h\neq j}^me_{jh}^U\exp\{y_{h}^{\vee}+\varepsilon\}\}\bigg)
 \exp\bigg\{\frac{\log(1-e_{jj}^Ue^{y_{j}^{\vee}}\bar{\mu})}{\bar{\mu}}\eta^{+}\bigg\},
   \end{eqnarray*}
that is,
\begin{eqnarray*}
&&\liminf_{t\rightarrow+\infty}y_{j}(t)\\&\geq&\ln\bigg[\frac{r^2}{e_{jj}^U}\bigg(\sum\limits_{l=1}^nd_{jl}^L\exp\{x_l^\wedge-\varepsilon\}-r_j^U
-\sum\limits_{h=1,h\neq j}^me_{jh}^U\exp\{y_{h}^{\vee}+\varepsilon\}\bigg)
\exp\bigg\{\frac{\log(1-e_{jj}^Ue^{y_{j}^{\vee}}\bar{\mu})}{\bar{\mu}}\eta^{+}\bigg\}\bigg],
\end{eqnarray*}
letting $\varepsilon\rightarrow0$, then
\begin{eqnarray*}
\liminf_{t\rightarrow+\infty}y_{j}(t)\geq y_{j}^{\wedge},\,\,j=1,2,\ldots,m.
\end{eqnarray*}
The proof of Lemma \ref{lem31} is completed.
\end{proof}

\begin{theorem}\label{thm31}
Assume $(H_1)$--$(H_4)$ hold. Then system $\eqref{e13}$ is permanent.
\end{theorem}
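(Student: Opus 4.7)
The plan is to derive Theorem 3.1 as an essentially immediate consequence of Lemma 3.1, together with the positivity/regressivity conditions packaged into $(H_4)$. Recall that ``permanence'' for system $\eqref{e13}$ means that every positive solution eventually enters a bounded subset of $\mathbb{R}^{n+m}$ that is bounded away from $-\infty$ on each coordinate (equivalently, in the original variables $z_i=\exp\{x_i\}$, $w_j=\exp\{y_j\}$, each component stays between two strictly positive constants). Lemma 3.1 has already supplied the four chains of inequalities $x_i^{\wedge}\leq\liminf x_i(t)\leq\limsup x_i(t)\leq x_i^{\vee}$ and $y_j^{\wedge}\leq\liminf y_j(t)\leq\limsup y_j(t)\leq y_j^{\vee}$; so the remaining task is just to check that each $x_i^{\vee}, x_i^{\wedge}, y_j^{\vee}, y_j^{\wedge}$ is a well-defined finite real number.

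First, I would note that the upper bounds $x_i^{\vee}$ and $y_j^{\vee}$ are automatically finite: their defining expressions involve only ratios of positive coefficients and exponentials of finite quantities, with $b_i^U, a_{ii}^L, e_{jj}^L, d_{jl}^U > 0$ from $(H_1)$, and the regressivity part of $(H_4)$ (i.e.\ $-b_i^U,\, -\sum_l d_{jl}^U \exp\{x_l^{\vee}\} \in \mathcal{R}^{+}$) ensures $1-b_i^U\bar{\mu}>0$ and the corresponding $\bigl(1/\sum_l d_{jl}^U \exp\{x_l^{\vee}\}\bigr)-\bar{\mu}>0$, so the exponential factors in $x_i^{\vee}$ and $y_j^{\vee}$ are well defined and strictly positive.

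Next, for the lower bounds $x_i^{\wedge}$ and $y_j^{\wedge}$ to be finite real numbers rather than $-\infty$, one needs the argument of each outer logarithm in their defining formulas to be strictly positive. This is precisely what the four inequalities in $(H_4)$ guarantee: the inequalities
\[
b_i^U\exp\!\Bigl\{\tfrac{b_i^U\tau^{+}}{1-b_i^U\bar{\mu}}\Bigr\}>a_{ii}^L,\qquad \sum_{l=1}^n d_{jl}^U\exp\{x_l^{\vee}\}\exp\!\Bigl\{\tfrac{\eta^{+}}{(1/\sum_l d_{jl}^U\exp\{x_l^{\vee}\})-\bar{\mu}}\Bigr\}>e_{jj}^L
\]
keep $x_i^{\vee}, y_j^{\vee}$ from collapsing, while the third and fourth inequalities in $(H_4)$ force
\[
b_i^L-\sum_{l\neq i}a_{il}^U\exp\{x_l^{\vee}\}-\sum_h c_{ih}^U\exp\{y_h^{\vee}\}>0,\qquad \sum_l d_{jl}^L\exp\{x_l^{\wedge}\}-r_j^U-\sum_{h\neq j}e_{jh}^U\exp\{y_h^{\vee}\}>0,
\]
which are exactly the positivity conditions needed inside the logarithms defining $x_i^{\wedge}$ and $y_j^{\wedge}$. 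Additionally, the regressivity hypotheses $-a_{ii}^U e^{x_i^{\vee}},\,-e_{jj}^U e^{y_j^{\vee}}\in\mathcal{R}^{+}$ guarantee $1-a_{ii}^U e^{x_i^{\vee}}\bar{\mu}>0$ and $1-e_{jj}^U e^{y_j^{\vee}}\bar{\mu}>0$, so the $\log(1-\cdots)$ terms in $x_i^{\wedge}, y_j^{\wedge}$ are well defined.

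Putting this together, I would simply conclude: set $M_i=\exp\{x_i^{\vee}\}$, $m_i=\exp\{x_i^{\wedge}\}$, $\widetilde{M}_j=\exp\{y_j^{\vee}\}$, $\widetilde{m}_j=\exp\{y_j^{\wedge}\}$, which are all finite and strictly positive by the verification above; then by Lemma 3.1 every positive solution $(x(t),y(t))$ satisfies $0<m_i\leq \liminf_{t\to\infty}\exp\{x_i(t)\}\leq \limsup_{t\to\infty}\exp\{x_i(t)\}\leq M_i$ and analogously for $y_j(t)$, which is the definition of permanence. No real obstacle is expected since Lemma 3.1 does all the substantive work; the only care needed is the bookkeeping check that $(H_4)$ matches exactly the positivity requirements of the $\wedge$-quantities and the regressivity requirements used when invoking Lemma 3.1. $\qed$
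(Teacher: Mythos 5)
Your proposal is correct and matches the paper's treatment: the paper states Theorem \ref{thm31} with no separate proof, presenting it as an immediate consequence of Lemma \ref{lem31}, exactly as you do. Your added bookkeeping that $(H_4)$ guarantees the quantities $x_i^{\vee}, x_i^{\wedge}, y_j^{\vee}, y_j^{\wedge}$ are well-defined finite reals (so the bounds from the lemma really do yield permanence) is a sensible and accurate elaboration of what the paper leaves implicit.
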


\section{Existence of positive almost periodic solutions}
\setcounter{equation}{0}
{\setlength\arraycolsep{2pt}}
 \indent

In this section, we will study the existence of  positive almost periodic solutions of $\eqref{e13}$. Consider the following equation:
\begin{eqnarray}\label{e41}
 \addtolength{\arraycolsep}{-3pt}
 \left\{%
\begin{array}{lcrcl}
x^{\Delta}(t)=f(t,x(t),x(t-\tau(t))),\,\, t\neq t_{k},\,\,t\in\mathbb{T},\\
\Delta x(t_{k})=I_{k}(x(t_{k})),\,\, t= t_{k}\geq t_0,\,\,k\in\mathbb{N},
\end{array}\right.
\end{eqnarray}
where $t_0\in\mathbb{T}$, $f:\mathbb{T} \times S_{B}\times S_{B}\rightarrow \mathbb{R}$, $S_{B}= \{x\in \mathbb{R} : \|x\|< B\}, \|x\|=\sup_{t\in\mathbb{T}}|x(t)|$. $\tau:\mathbb{T}\rightarrow \mathbb{R}^+$, $\{t_k\}\in \mathcal{B}$, $I_{k}:S_{B}\rightarrow \mathbb{R}$, $k\in\mathbb{N}$.

Denote by $x(t)=x(t;t_0,\varphi_0)$, $\varphi_0\in PC_{rd}[\mathbb{T},S_{B}]$, the solution of system (4.1) with initial conditions:
\begin{eqnarray*}
 \addtolength{\arraycolsep}{-3pt}
 \left\{%
\begin{array}{lcrcl}
x(t)=\varphi_0(t),\,\,t\in(-\infty,t_0]_{\mathbb{T}},\\
x(t_0^+;t_0,\varphi_0)=\varphi_0(t_0).
\end{array}\right.
\end{eqnarray*}

Introduce the following conditions:
\begin{itemize}
    \item  [$(A_1)$] The function $f(t, x, z)$ is almost periodic in $t$ uniformly with respect to $x$ and $z\in S_{B}$, and $f(t,0,0)=0$ for $t\in\mathbb{T}$.
    \item  [$(A_2)$] The sequence of functions $I_{k}(x)$ is almost periodic uniformly with respect to $x\in S_{B}$ and is Lipschitz continuous in $x$, $I_{k}(0)=0$, $k\in\mathbb{N}$.
    \item  [$(A_3)$] The function $\tau(t)$ is almost periodic, $t-\tau(t)\in\mathbb{T}$ for $t\in\mathbb{T}$.
    \item  [$(A_4)$] The set of sequences $\{t_{k}^{j}\}$, $t_{k}^{j}=t_{k+j}-t_{k}$, $k, j\in\mathbb{N}$ is uniformly almost periodic and $\inf_{k}t_{k}^{1}=\theta>0$.
\end{itemize}

Suppose that conditions $(A_1)$--$(A_4)$ hold,  and let $\{\omega'_m\}\subset\Pi$ be an arbitrary
sequence. Then there exists a subsequence $\{\omega_n\}, \omega_n =\omega'_{m_n}$, such that the sequence $\{f(t + \omega_n, x,z)\}$ is convergent uniformly to the
function $f^s(t, x)$, and from Lemma \ref{lemd11} it follows that the set of sequences
$\{t_k-\omega_n\}, k \in \mathbb{Z}$ is convergent to the sequence $t^s_k$ uniformly with
respect to $k\in \mathbb{Z}$ as $n\rightarrow\infty$.
By $\{k_{n_i}\}$ we denote the sequence of integers, such that the subsequence
$\{t_{k_{n_i}}\}$ is convergent to the sequence $t^s_k$ uniformly with respect to $k\in \mathbb{Z}$
as $i\rightarrow\infty$. Then, for every sequence $\{\omega'_m\}$, the system \eqref{e41} ¡°moves¡± to the system

\begin{eqnarray}\label{e4111}
 \left\{%
\begin{array}{lcrcl}
x^{\Delta}(t)=f^s(t,x(t),x(t-\tau^s(t))),\,\,\,\, t\neq t_{k}^s,\\
\Delta x(t_{k}^s)=I_{k}^s(x(t_{k}^s)),\,\,\,\, t_0\leq t= t_{k}^s, k\in \mathbb{N}.
\end{array}\right.
\end{eqnarray}

Define
\begin{eqnarray*}
V_{1}&=&\bigg\{V:\mathbb{T}\times S_{B}\times S_{B}\rightarrow\mathbb{R}^{+}, V\,\, \mathrm{is}\,\, \mathrm{rd-continuous}\,\,\mathrm{in}\,\, (t_{k-1}, t_{k}]_{\mathbb{T}}\times S_{B}\times S_{B}\,\, \mathrm{and} \\ &&\lim\limits_{(t, x, y)\rightarrow(t_{k}, x_{0}, y_{0}), t>t_{k}}V(t, x, y)=V(t_{k}^{+}, x_{0}, y_{0})\bigg\}.
\end{eqnarray*}
\begin{eqnarray*}
W_{1}&=&\bigg\{W:\mathbb{T}\times S_{B}\rightarrow\mathbb{R}^{+}, W \,\, \mathrm{is}\,\, \mathrm{rd-continuous}\,\,\mathrm{in}\,\,(t,x)\in\mathbb{T}\times S_{B},\\&& W(t_{k}^{s-},x)=\lim\limits_{t\rightarrow t_{k}^s; t<t_{k}^s}W(t, x), W(t_{k}^{s+},x)=\lim\limits_{t\rightarrow t_{k}^s; t>t_{k}^s}W(t, x)\,\, \mathrm{and}\,\, \mathrm{the}\,\, \mathrm{equality}\\&& W(t_{k}^{s-},x)=W(t_{k}^{s},x)\,\,\mathrm{holds} \bigg\}.
\end{eqnarray*}
For $V(t, x, y)\in V_{1}$, $W(t, x)\in W_{1}$ and for some $t\geq t_0$, define the following set:
\[\Omega_1=\{x,y\in PC_{rd}[\mathbb{T}^{+}, S_{B}]|V(t, x(s), y(s))\leq V(t, x(t), y(t)), s\in(-\infty,t]_{\mathbb{T}}\},\]
\[\Omega_2=\{x,y\in PC_{rd}[\mathbb{T}^{+}, S_{B}]|W(t, x(s))\leq V(t, x(t)), s\in(t_0,t]_{\mathbb{T}}\}.\]

The proof of the following lemma is similar to the proof of  Lemma 3.6 in \cite{25}.
\begin{lemma}\label{lem51}
Suppose that conditions $(A_1)$--$(A_4)$ hold and there exists a Lyapunov functional $W(t, x)\in W_{1}$ satisfying the following conditions
\begin{itemize}
    \item  [$(i)$]  $a(||x||)\leq W(t, x)\leq b(||x||)$, where $(t, x)\in\mathbb{T}\times S_{B}$, $a, b\in \kappa$, $\kappa=\{a\in C(\mathbb{R}^{+}, \mathbb{R}^{+}): a(0)=0$ and $a$ is increasing\};
    \item  [$(ii)$] $|W(t, x)-W(t, x_{1})|\leq H\|x-x_{1}\|$, where $(t, x)\in\mathbb{T}\times S_{B}$, $H > 0$ is a constant;
    \item  [$(iii)$] $W(t_{k}^{+}, x+I_{k}^s(x))\leq W(t, x)$, $x\in S_{B}$, $t_0\leq t=t_{k}^s$, $k\in\mathbb{N};$
    \item  [$(iv)$] $D^{+}W_{(4.2)}^{\Delta}(t, x)\leq -cV(t, x)$, where $c> 0$, $-c\in\mathcal{R}^{+}$, $x\in \Omega_2$, $t\neq t_{k}^s$, $k\in\mathbb{N}$.
\end{itemize}
Then the zero solution of $\eqref{e4111}$ is uniformly asymptotically stable.
\end{lemma}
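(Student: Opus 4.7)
The plan is to split uniform asymptotic stability of the zero solution of \eqref{e4111} into its two standard ingredients---uniform stability and uniform attraction in a common ball---and to establish each by tracking how the Lyapunov functional $W$ evolves along a solution $x(t)=x(t;t_0,\varphi_0)$.

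For uniform stability, given $\varepsilon\in(0,B)$, I would use $a\in\kappa$ to pick $\delta=\delta(\varepsilon)>0$ with $b(\delta)<a(\varepsilon)$ and then show $W(t,x(t))\leq W(t_0^+,\varphi_0(t_0))$ for every $t\geq t_0$ whenever $\|\varphi_0\|<\delta$. Between impulse instants one invokes a Razumikhin-type step: whenever $W(t,x(t))$ momentarily equals its maximum on the past interval $(t_0,t]$, the admissibility condition $x\in\Omega_2$ is met and hypothesis (iv) gives $D^+W^\Delta(t,x(t))\leq -cV(t,x(t))\leq 0$, blocking any further increase. At each impulse $t=t_k^s$, hypothesis (iii) ensures $W(t_k^{s+},x(t_k^s)+I_k^s(x(t_k^s)))\leq W(t_k^s,x(t_k^s))$, so no upward jump occurs. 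Combining these two observations with (i) yields $a(\|x(t)\|)\leq W(t,x(t))\leq b(\|\varphi_0\|)<a(\varepsilon)$, hence $\|x(t)\|<\varepsilon$.

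For uniform attraction, I would fix $\lambda:=\delta(B)$ from the previous step so that solutions starting with $\|\varphi_0\|<\lambda$ stay in $S_B$, then read hypothesis (iv) along the solution as the scalar impulsive inequality $W^\Delta(t,x(t))\leq -cW(t,x(t))$ for $t\neq t_k^s$, together with $W(t_k^{s+},\cdot)\leq W(t_k^s,\cdot)$ from (iii). Applying Lemma \ref{lem27} with $p\equiv -c$, $q\equiv 0$, $d_k=1$, $b_k=0$ produces $W(t,x(t))\leq b(\lambda)\,e_{-c}(t,t_0)$ for all $t\geq t_0$. Since $-c\in\mathcal{R}^+$ with $c>0$, the generalized exponential $e_{-c}(t,t_0)$ decays to $0$ as $t-t_0\to+\infty$ uniformly in $t_0$; for any prescribed $\varepsilon>0$ I would choose $T=T(\varepsilon)>0$ so that $b(\lambda)\,e_{-c}(t,t_0)<a(\varepsilon)$ whenever $t\geq t_0+T$, and then (i) yields $\|x(t)\|<\varepsilon$ for all such $t$.

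The hard part will be handling the Razumikhin admissibility $x\in\Omega_2$ inside hypothesis (iv): on its face, (iv) only supplies a decay estimate when past values of $W$ along the solution do not exceed the present value, so one has to justify that this pointwise maximum principle can be promoted into a genuine $\Delta$-differential inequality holding on the entire solution orbit. The Lipschitz property (ii), combined with right-dense continuity of $W$ between impulses, is what one would use to make this promotion rigorous. A smaller technical nuisance is confirming the uniform decay of $e_{-c}(\cdot,t_0)$ on an arbitrary almost periodic time scale; this relies on $-c\in\mathcal{R}^+$ together with a uniform upper bound on $\bar\mu$ to keep $\xi_{\mu(\theta)}(-c)$ uniformly negative.
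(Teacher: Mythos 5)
The paper does not actually write out a proof of Lemma \ref{lem51}: it only remarks that the argument is ``similar to the proof of Lemma 3.6 in \cite{25}'' (Stamov's monograph). So the fair comparison is with that citation and with the proof the paper \emph{does} write out for the companion Lemma \ref{lem52}, which uses exactly the machinery you propose: assert that the solution lies in the admissible class, read condition $(iv)$ as a scalar impulsive inequality $D^{+}W^{\Delta}\leq -cW$, integrate it via the impulsive Gronwall estimate (Lemma \ref{lem27}) to obtain $W(t,x(t))\leq e_{-c}(t,t_0)W(t_0,\cdot)+\cdots$, kill the jumps with $(iii)$, and convert back to norms with the wedge functions in $(i)$. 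Your two-part decomposition into uniform stability plus uniform attraction, and your use of $e_{-c}(t,t_0)\to 0$ under $-c\in\mathcal{R}^{+}$, is therefore essentially the same route the paper takes.

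The one substantive issue is the point you yourself flag and then defer: condition $(iv)$ is a Razumikhin-type hypothesis, valid only when the solution belongs to $\Omega_2$, i.e.\ when $W$ at the current time dominates its values over the whole past interval. Your uniform-attraction step applies Lemma \ref{lem27} with $p\equiv -c$ over \emph{all} $t\geq t_0$, which presupposes that $W^{\Delta}(t,x(t))\leq -cW(t,x(t))$ holds at every non-impulse point, not merely at running-maximum points. The Lipschitz bound $(ii)$ and rd-continuity do not by themselves promote the minimal-class condition to a global differential inequality, and with the minimal class ($W(t,x(s))\leq W(t,x(t))$, no margin) a Razumikhin estimate classically yields only that the running maximum of $W$ is nonincreasing --- which gives uniform stability but not, without further argument, the exponential decay $W(t,x(t))\leq b(\lambda)e_{-c}(t,t_0)$ you claim. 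To close this you would either need to show that along the solutions under consideration membership in $\Omega_2$ persists (which is what the paper silently asserts in the proof of Lemma \ref{lem52} when it writes ``$(x(t),x(t+\omega_m-\omega_l))\in\Omega_1$''), or strengthen the Razumikhin class. Since the paper itself makes the identical leap, your proposal is faithful to its level of rigor, but as a self-contained proof the attraction step is not yet complete.
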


\begin{lemma}\label{lem52}
Suppose that conditions $(A_1)$--$(A_4)$ hold and there exists a Lyapunov functional $V(t, x, y)\in V_{1}$ satisfying the following conditions:
\begin{itemize}
    \item  [$(i)$]  $a(||x-y||)\leq V(t, x, y)\leq b(||x -y||)$, where $(t, x, y)\in\mathbb{T}\times S_{B}\times S_{B}$, $a, b\in \kappa$, $\kappa=\{a\in C(\mathbb{R}^{+}, \mathbb{R}^{+}): a(0)=0$ and $a$ is increasing\};
    \item  [$(ii)$] $|V(t, x, y)-V(t, x_{1}, y_{1})|\leq L(\|x-x_{1}\|+ \|y-y_{1}\|)$, where $(t, x, y)\in\mathbb{T}\times S_{B}\times S_{B}$, $L > 0$ is a constant;
    \item  [$(iii)$] $V(t_{k}^{+}, x+I_{k}(x), y+I_{k}(y))\leq V(t, x, y)$, $x, y\in S_{B}$, $t_0\leq t=t_{k}$, $k\in\mathbb{N};$
    \item  [$(iv)$] $D^{+}V_{(4.1)}^{\Delta}(t, x, y)\leq -cV(t, x, y)$, where $c> 0$, $-c\in\mathcal{R}^{+}$, $x, y\in \Omega_1$, $t\neq t_{k}$, $k\in\mathbb{N}$.
\end{itemize}
Moreover, if there exists a solution $x(t;t_0,\varphi_0)\subset S$ of $\eqref{e41}$ for $t_0\leq t\in\mathbb{T}$, where $S\subset S_{B}$ is a compact set, then there exists a unique almost periodic solution $p(t)$ of $\eqref{e41}$, which is uniformly asymptotically stable.
\end{lemma}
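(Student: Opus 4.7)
The plan is to construct the required almost periodic solution as a limit of translates of the given bounded solution $x(t;t_0,\varphi_0)\subset S$, and then to deduce both uniqueness and uniform asymptotic stability from the Lyapunov estimate supplied by conditions (iii)--(iv). The strategy is the standard Bohr--Lyapunov recipe, adapted to impulsive dynamic equations with delay via the framework set up in Section~2 and via Lemma~\ref{lem51}.

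First, invoke $(A_1)$--$(A_4)$ together with Lemma~\ref{lemd11} to extract, from any sequence $\{\omega_m'\}\subset\Pi$, a subsequence $\{\omega_n\}$ along which $f(t+\omega_n,\cdot,\cdot)\to f^s$, $\tau(t+\omega_n)\to\tau^s$, $I_{k+k_n}\to I_k^s$ and $\{t_k-\omega_n\}$ converges on $\mathcal{B}$ to $\{t_k^s\}$, so that $(4.1)$ ``moves'' to $(4.2)$ in the sense of Definition~\ref{def27}. Set $x_n(t):=x(t+\omega_n;t_0,\varphi_0)$; because the solution is confined to the compact set $S\subset S_B$, the family $\{x_n\}$ is uniformly bounded.

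The core estimate is to show that $\{x_n\}$ is uniformly Cauchy away from the limit impulse set. The driving quantity is $V(t,x_n(t),x_m(t))$. By (iii) this functional does not increase at any impulsive instant, and on each continuous piece a Razumikhin-type dichotomy is available: either $(x_n,x_m)\in\Omega_1$ at $t$, in which case (iv) gives $D^+V^{\Delta}\leq -cV$, or else the supremum of $V$ over $(-\infty,t]$ is attained strictly earlier and $V$ is already non-increasing at $t$. With the regressivity $-c\in\mathcal{R}^{+}$, Lemma~\ref{lem27} applied piecewise then yields
\[
V(t,x_n(t),x_m(t))\leq V(t_0,x_n(t_0),x_m(t_0))\,e_{-c}(t,t_0),\qquad t\geq t_0.
\]
The right side is bounded uniformly in $n,m$ by (ii) and compactness of $S$, and $e_{-c}(t,t_0)\to 0$ exponentially as $t\to+\infty$. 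The lower bound $a(\|x_n(t)-x_m(t)\|)\leq V$ from (i), with $a\in\kappa$, then forces $\|x_n-x_m\|\to 0$ uniformly on $[T,+\infty)_{\mathbb{T}}$ for each fixed $T$. Shifting the basepoint $t_0$ backward by further elements of $\Pi$ extends this to all of $\mathbb{T}$, and completeness in the sense of Definition~\ref{def27} furnishes a limit $p\in PC_{rd}(\mathbb{T},S_B)$ which, by passing to the limit in $(4.1)$, solves the hull system $(4.2)$. Applying the construction once more to the sequence $\{-\omega_n\}$ returns us from the hull to $(4.1)$, so $p$ itself is a solution of $(4.1)$; the relative denseness inherited from $f,I_k,\tau,\{t_k\}$ makes $p$ almost periodic in the sense of Definition~\ref{def28}.

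Finally, for any solution $q(t)$ of $(4.1)$ staying in $S_B$ the same Lyapunov estimate, now applied to $V(t,p(t),q(t))$, gives $\|p(t)-q(t)\|\to 0$ exponentially; this forces uniqueness of the almost periodic solution and, by taking $q$ to be any perturbed solution, yields uniform asymptotic stability (equivalently, one may apply Lemma~\ref{lem51} to $z(t):=q(t)-p(t)$ with Lyapunov function $W(t,z):=V(t,p(t)+z,p(t))$, the hypotheses of which transfer directly from (i)--(iv)). The main obstacle I expect is in the Cauchy step: guaranteeing that the Razumikhin condition $(x_n,x_m)\in\Omega_1$ is compatible with the impulsive jumps and with right-scattered points of $\mathbb{T}$, so that the exponential decay of $V$ genuinely propagates across every $t_k$ uniformly in $n,m$. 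Condition (iii) is designed precisely for this, but its interplay with (iv) in the mixed continuous/discrete time-scale setting, and the verification that the limiting discontinuity set $\{t_k^s\}$ produced by $(A_4)$ is compatible with the limiting function $p$, require the most care.
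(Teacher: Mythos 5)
Your overall architecture matches the paper's: translate the bounded solution by $\omega_n\in\Pi$, use the Lyapunov functional on pairs of translates to get a Cauchy property, pass to the limit to obtain a solution $p$ of the hull system \eqref{e4111}, prove almost periodicity of $p$ by the same estimate with translation numbers, and obtain uniform asymptotic stability by feeding the difference system into Lemma \ref{lem51} with $W(t,u)=V(t,p(t),p(t)+u)$. However, there is a genuine gap in your core Cauchy step. You assert
\[
V(t,x_n(t),x_m(t))\leq V(t_0,x_n(t_0),x_m(t_0))\,e_{-c}(t,t_0),
\]
but condition $(iv)$ controls $D^{+}V^{\Delta}$ only along a pair of solutions of the \emph{same} equation \eqref{e41}, whereas $x_n(t)=x(t+\omega_n)$ and $x_m(t)=x(t+\omega_m)$ solve two \emph{different} translated equations, with right-hand sides $f(t+\omega_n,\cdot,\cdot)$ and $f(t+\omega_m,\cdot,\cdot)$. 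The correct differential inequality therefore carries an extra forcing term, bounded via $(ii)$ by $L\|f(t+\omega_m,\cdot,\cdot)-f(t+\omega_l,\cdot,\cdot)\|$, which is small by almost periodicity but not zero; this is exactly why the paper obtains $D^{+}V^{\Delta}\leq -cV+\tfrac{c\,a(\varepsilon)}{2}$ and hence $V\leq e_{-c}(\cdot,t_0)V(t_0,\cdot,\cdot)+\tfrac{a(\varepsilon)}{2}$ rather than pure exponential decay.

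Even setting that aside, the inequality you wrote does not yield what you need: it makes $V$ small as $t\to+\infty$ for every fixed pair $(n,m)$, not small as $n,m\to\infty$ on a fixed compact set $U$. The paper closes this by evaluating the functional at the shifted time, i.e.\ bounding $V(t+\omega_l,x(t+\omega_l),x(t+\omega_m))$ and using that $e_{-c}(t+\omega_l,t_0)<a(\varepsilon)/(2b(2B))$ once $l$ is large, so that \emph{both} the decaying term and the perturbation term are below $a(\varepsilon)/2$ for $m\geq l\geq n_0(\varepsilon)$; only then does $(i)$ give $\|x(t+\omega_m)-x(t+\omega_l)\|<\varepsilon$ uniformly on $U$. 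Your phrase about ``shifting the basepoint backward'' gestures at this but does not supply the quantitative mechanism. You should restore the perturbation term from $(ii)$, apply the impulsive variation-of-constants bound (Lemma \ref{lem27}) with the inhomogeneity, and perform the evaluation at $t+\omega_l$; the rest of your argument (limit solves the hull equation, almost periodicity, uniqueness and stability via Lemma \ref{lem51}) then goes through as in the paper. Your additional remark that one must return from the hull \eqref{e4111} to \eqref{e41} via the sequence $\{-\omega_n\}$ is a reasonable supplement that the paper itself omits.
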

\begin{proof}
Let $\{\omega_{m}\}\subset\Pi$ be any sequence  such that $\omega_{m}\rightarrow+\infty$ as $m\rightarrow+\infty$ and $\{\omega_{m}\}$ moves $\eqref{e41}$ to $\eqref{e4111}$.
Let $U\subset \mathbb{T}$ be a compact. Then, for any $\varepsilon>0$, take large enough integer $n_{0}(\varepsilon)$ such that when $m\geq l\geq n_{0}(\varepsilon)$, we have
\begin{eqnarray*}
b(2B)e_{(-c)}(\omega_{l}, t_0)<\frac{a(\varepsilon)}{2}
\end{eqnarray*}
and
\begin{eqnarray*}
\|f(t+\omega_{m},x(t),x(t-\tau(t)))-f(t+\omega_{l},x(t),x(t-\tau(t)))\|<\frac{ca(\varepsilon)}{2L}.
\end{eqnarray*}
Then for $t>t_0$, $(x(t),x(t+\omega_{m}-\omega_{l}))\in\Omega_1$, from $(iv)$, we have
\begin{eqnarray}\label{wli1}
&&D^{+}V^{\Delta}(t, x(t), x(t+\omega_{m}-\omega_{l}))\nonumber\\
&\leq&-cV(t, x(t),x(t+\omega_{m}-\omega_{l}))\nonumber\\
&&+L\|f(t+\omega_{m}-\omega_{l},x(t+\omega_{m}-\omega_{l}),x(t+\omega_{m}-\omega_{l}-\tau(t+\omega_{m}-\omega_{l})))\nonumber\\
&&-f(t,x(t+\omega_{m}-\omega_{l}),x(t+\omega_{m}-\omega_{l}-\tau(t+\omega_{m}-\omega_{l})))\|\nonumber\\
&\leq&-cV(t, x(t), x(t+\omega_{m}-\omega_{l}))+\frac{ca(\varepsilon)}{2}.
\end{eqnarray}
It follows from \eqref{wli1} that
for $m\geq l\geq n_{0}(\varepsilon)$ and $t\in U$, we have
\begin{eqnarray*}
&&V(t+\omega_{l}, x(t+\omega_{l}), x(t+\omega_{m}))\\
&\leq&e_{(-c)}(t+\omega_{l}, t_0)V(t_0, x(t_0), x(t_0+\omega_{m}-\omega_{l}))+\frac{a(\varepsilon)}{2}(1-e_{(-c)}(t+\omega_{l}, t_0))\\
&\leq&e_{(-c)}(t+\omega_{l}, t_0)V(t_0, x(t_0), x(t_0+\omega_{m}-\omega_{l}))+\frac{a(\varepsilon)}{2}
\leq a(\varepsilon).
\end{eqnarray*}
By $(i)$, for $m\geq l\geq n_{0}(\varepsilon)$ and $t\in U$, we obtain
\begin{eqnarray*}
\|x(t+\omega_{m})-x(t+\omega_{l})\|<\varepsilon.
\end{eqnarray*}

Consequently, there exists a function $p(t)$, such that $x(t+\omega_{m})-p(t)\rightarrow\infty$ for $m\rightarrow\infty$, and $\omega(t)$ is defined uniformly on $U$.

Next, we shall show that $p(t)$ is a solution of $\eqref{e4111}$.

Since $x(t;t_0,\varphi_0)\subset S$ is a solution of $\eqref{e41}$, we have
\begin{eqnarray*}
\|x^\Delta(t+\omega_{m})-x^\Delta(t+\omega_{l})\|&\leq&\|f(t+\omega_{m},x(t+\omega_{m}),x(t+\omega_{m}-\tau(t+\omega_{m})))\\
&&-f(t+\omega_{l},x(t+\omega_{m}),x(t+\omega_{m}-\tau(t+\omega_{m})))\|\\
&&+\|f(t+\omega_{l},x(t+\omega_{m}),x(t+\omega_{m}-\tau(t+\omega_{m})))\|\\
&&-f(t,x(t+\omega_{l}),x(t+\omega_{l}-\tau(t+\omega_{l})))\|,
\end{eqnarray*}
for $t+\omega_{i}\neq t_{k}$, $i=m,l$ and $k\in\mathbb{Z}$.

As $x(t+\omega_{m})\subset S_B$, for large $\omega_{m}\in\Pi$ and for each compact subset of $\mathbb{T}$ there exists
$n_{1}(\varepsilon)$ such that when $m\geq l\geq n_{1}(\varepsilon)$, then
\begin{eqnarray*}
&&\|f(t+\omega_{m},x(t+\omega_{m}),x(t+\omega_{m}-\tau(t+\omega_{m})))\\
&&-f(t+\omega_{l},x(t+\omega_{m}),x(t+\omega_{m}-\tau(t+\omega_{m})))\|<\frac{\varepsilon}{2}.
\end{eqnarray*}

Since $x(t+\omega_{i})\subset S_B$, $i=m,l$ and from Lemma \ref{lemd11}, it follows that there exists
$n_{2}(\varepsilon)$ such that if $m\geq l\geq n_{2}(\varepsilon)$, then
\begin{eqnarray*}
&&\|f(t+\omega_{l},x(t+\omega_{m}),x(t+\omega_{m}-\tau(t+\omega_{m})))\\
&&-f(t+\omega_{l},x(t+\omega_{l}),x(t+\omega_{l}-\tau(t+\omega_{l})))\|<\frac{\varepsilon}{2}.
\end{eqnarray*}
Hence for $m\geq l\geq n(\varepsilon)$, $n(\varepsilon)=\max{n_1(\varepsilon),n_2(\varepsilon)}$, we obtain
\begin{eqnarray*}
\|x^\Delta(t+\omega_{m})-x^\Delta(t+\omega_{l})\|<\varepsilon,
\end{eqnarray*}
where $t+\omega_{i}\neq t_k^s$, $i=m,l$ and $k\in\mathbb{Z}$, which shows that $\lim_{m\rightarrow\infty}x^\Delta(t+\omega_{m})$ exists uniformly on every compact subset  of $\mathbb{T}$.

Thus, $\lim_{m\rightarrow\infty}x^\Delta(t+\omega_{m})=p^\Delta(t)$, and
\begin{eqnarray*}
p^\Delta(t)&=&\lim_{m\rightarrow\infty}[f(t+\omega_{m},x(t+\omega_{m}),x(t+\omega_{m}-\tau(t+\omega_{m})))]\\
&=&f^s(t,p(t),p(t-\tau^s(t))),
\end{eqnarray*}
where $t+\omega_{m}\neq t_k^s$.

On the other hand, for $t+\omega_{m}=t_k^s$, it follows that
\begin{eqnarray*}
p(t_k^{s+})-p(t_k^{s-})&=&\lim_{m\rightarrow\infty}[x(t_k^s+\omega_{m}+0)-x(t_k^s+\omega_{m}-0)]\\
&=&\lim_{m\rightarrow\infty}I_{k}^s(x(t_{k}^s+\omega_{m}))=I_{k}^s(p(t_{k}^s)).
\end{eqnarray*}
Then we get that $p(t)$ is a solution of $\eqref{e4111}$.

We have to show that $p(t)$ is an almost periodic function.

Since $\lim_{m\rightarrow\infty}x(t+\omega_{m})=p(t)$ and $x(t+\omega_{m})\subset S$ is bounded, then $p(t)$ is bounded. The function $f^s(t, x, z)$ is almost periodic in $t$ uniformly with respect to $x, z\in S_{B}$, then there exists $M>0$ such that $|f^s(t, x, z)|\leq M$. If the points $t^{'}$ and $t^{''}$ belong to the same interval of continuity and $|t_2-t_1|<\delta=\frac{\varepsilon}{M}$, since $p(t)$ is a solution of $\eqref{e4111}$, we have
\begin{eqnarray*}
\bigg|\int_{t_1}^{t_2}p^{\Delta}(t)\Delta t\bigg|=\bigg|\int_{t_1}^{t_2}f^s(t,x(t),x(t-\tau^s(t)))\Delta t\bigg|,
\end{eqnarray*}
then
$
|p(t_2)-p(t_1)|\leq M|t_2-t_1|,
$
thus
$
|p(t_2)-p(t_1)|<\varepsilon,
$
which shows that $p(t)$ is uniformly continuous in every continuous interval.

On the other hand, for any $\varepsilon>0$, take large enough integer $n_{3}(\varepsilon)$ such that when $m\geq l\geq n_{3}(\varepsilon)$, we have
\begin{eqnarray*}
b(2B)e_{(-c)}(\omega_{l}, (t_0)<\frac{a(\varepsilon)}{4}
\end{eqnarray*}
and
\begin{eqnarray*}
|f(t+\omega_{m},x(t),x(t-\tau(t)))-f(t+\omega_{l},x(t),x(t-\tau(t)))|<\frac{ca(\varepsilon)}{4L}.
\end{eqnarray*}
For each fixed $t\in\mathbb{T}$, let $r_\varepsilon\in\Pi$  be a $\frac{ca(\varepsilon)}{4L}$-translation number of the function
$f$ such that $t+r_\varepsilon\geq 0$.
Now, we consider the function $V(t+r_\varepsilon, p(t), p(t+\omega_{m}-\omega_{l}))$, where $t\leq t+\omega_{m}$.
Then
\begin{eqnarray*}
&&D^{+}V^{\Delta}(t+r_\varepsilon, p(t), p(t+\omega_{m}-\omega_{l}))\\
&\leq&-cV(t+r_\varepsilon, p(t), p(t+\omega_{m}-\omega_{l}))\\
&&+L\|f^s(t,p(t),p(t-\tau^s(t))-f^s(t+r_\varepsilon,p(t),p(t-\tau^s(t))\|\\
&&+L\|f^s(t+\omega_{m}-\omega_{l},p(t+\omega_{m}-\omega_{l}),p(t+\omega_{m}-\omega_{l}-\tau(t+\omega_{m}-\omega_{l})))\\
&&-f^s(t+r_\varepsilon,p(t+\omega_{m}-\omega_{l}),p(t+\omega_{m}-\omega_{l}-\tau(t+\omega_{m}-\omega_{l})))\|\\
&\leq& -cV(t+r_\varepsilon, p(t), p(t+\omega_{m}-\omega_{l}))+\frac{3ca(\varepsilon)}{4}.
\end{eqnarray*}

On the other hand, from $t= t_{k}-(\omega_{m}-\omega_{l})$ and $(iii)$ it follows that
\begin{eqnarray*}
&&V(r_\varepsilon+t_{k}^s, p(t_{k}^s)+I_{k}^s(p(t_{k}^s)), p(t_{k}^s+\omega_{m}-\omega_{l})+I_{k}^s(p(t_{k}^s+\omega_{m}-\omega_{l})))\\
&&\leq V(r_\varepsilon+t_{k}^s, p(t_{k}^s), p(t_{k}^s+\omega_{m}-\omega_{l})),
\end{eqnarray*}
then
\begin{eqnarray*}
&&V(t+r_\varepsilon+\omega_{l}, p(t+\omega_{l}), p(t+\omega_{m}))\\
&\leq&e_{(-c)}(t+r_\varepsilon+\omega_{l}, t_0)V(t_0, p(t_0), p(t_0+\omega_{m}-\omega_{l}))+\frac{3a(\varepsilon)}{4}(1-e_{(-c)}(t+\omega_{l}, t_0))\\
&\leq&e_{(-c)}(t+r_\varepsilon+\omega_{l}, t_0)V(t_0, p(t_0), p(t_0+\omega_{m}-\omega_{l}))+\frac{3a(\varepsilon)}{4}\leq a(\varepsilon).
\end{eqnarray*}
By $(i)$, for $m\geq l\geq n_{3}(\varepsilon)$ and $t\in\mathbb{T}$, we obtain
\begin{eqnarray}\label{liwj1}
\|p(t+\omega_{m})-p(t+\omega_{l})\|<\varepsilon
\end{eqnarray}
and from the definition of the sequence $\{\omega_{m}\}$ for $m\geq l\geq n_{3}(\varepsilon)$, it follows that
\begin{eqnarray}\label{liwj2}
\rho(t_k+\omega_{m},t_k+\omega_{l})<\varepsilon,
\end{eqnarray}
where $\rho(\cdot,\cdot)$ is an arbitrary distance in $\mathcal{B}$.

From \eqref{liwj1} and \eqref{liwj2}, we obtain that the sequence $p(t+\omega_{m})$ is convergent uniformly to the function $p(t)$. Hence, $p(t)$ is almost periodic.

Finally, we will prove that $p(t)$ is uniformly asymptotically stable.

Let $\bar{p}(t)$ be an arbitrary solution of $\eqref{e4111}$. Set
\begin{eqnarray*}
&&u(t)=\bar{p}(t)-p(t),\\
&&g^s(t,u(t))=f^s(t, u(t)+p(t),u(t)+p(t-\tau^s(t)))-f^s(t, p(t),p(t-\tau^s(t))),\\
&&B_k^s(u)=I_k^s(u+p)-I_k^s(u).
\end{eqnarray*}
Now, we consider the system
\begin{eqnarray}\label{e56}
 \left\{%
\begin{array}{lcrcl}
u^{\Delta}(t)=g^s(t,u(t)),\,\,\,\, t\neq t_{k}^s,\\
\Delta u(t_{k}^s)=B_k^s(u(t_{k}^s)),\,\,\,\, t_0\leq t= t_{k}^s
\end{array}\right.
\end{eqnarray}
and let $W(t, u(t)) =V(t, p(t), p(t)+u(t))$.

Then from Lemma \ref{lem51} it follows that the zero solution $u(t)=0$ of system \eqref{e56} is uniformly asymptotically stable, and hence $p(t)$ is uniformly asymptotically stable. The proof is completed.
\end{proof}

Let $(x(t),y(t))=(x_{1}(t), x_{2}(t),\ldots,x_{n}(t),y_{1}(t), y_{2}(t),\ldots,y_{m}(t))$ be any solution of system $\eqref{e13}$, $\Omega= \{(x(t),y(t)): 0 < x_i^{\wedge}\leq x_i(t)\leq  x_i^{\vee}\}, 0 < y_j^{\wedge}\leq y_j(t)\leq  y_j^{\vee}\}$. It is easy to verify that under the conditions of Theorem \ref{thm31}, $\Omega$ is an invariant set of $\eqref{e13}$.

\begin{lemma}
Assume that $(H_{1})$--$(H_{4})$ hold, then $\Omega\neq\emptyset$.
\end{lemma}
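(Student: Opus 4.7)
The goal is to exhibit at least one trajectory of system \eqref{e13} that lives in the box $[x_i^{\wedge},x_i^{\vee}]\times[y_j^{\wedge},y_j^{\vee}]$ for \emph{every} $t$, not merely eventually. The natural strategy is to start from any admissible solution, translate it far into the future where permanence has taken effect, and then pass to the limit of these translates to obtain an honest invariant trajectory. Concretely, my plan is as follows.

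First I would fix a solution $(x(t),y(t))=(x_1(t),\ldots,x_n(t),y_1(t),\ldots,y_m(t))$ of \eqref{e13} corresponding to some positive initial data satisfying the hypotheses imposed after \eqref{e13}. By Lemma~\ref{lem31}, for every $\varepsilon>0$ there exists $T_\varepsilon\geq t_0$ such that, for all $t\geq T_\varepsilon$,
\begin{equation*}
x_i^{\wedge}-\varepsilon\leq x_i(t)\leq x_i^{\vee}+\varepsilon,\qquad y_j^{\wedge}-\varepsilon\leq y_j(t)\leq y_j^{\vee}+\varepsilon,
\end{equation*}
for $i=1,\ldots,n$, $j=1,\ldots,m$. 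In particular the translate $(x(\cdot+T_\varepsilon),y(\cdot+T_\varepsilon))$ takes values in the $\varepsilon$-enlargement of the target box on $[0,+\infty)_{\mathbb{T}}$.

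Next I would choose a sequence $\{\tau_k\}\subset\Pi$ with $\tau_k\to+\infty$ that is simultaneously an $\varepsilon_k$-translation sequence (with $\varepsilon_k\to 0$) for all the coefficients $b_i, a_{il}, c_{ih}, \tau_{il}, \delta_{ih}, r_j, d_{jl}, e_{jh}, \xi_{jl}, \eta_{jh}$ appearing in \eqref{e13} (Definition \ref{defli2}) and, by hypothesis $(H_3)$ together with Lemma~\ref{lemd11}, such that $\{t_k-\tau_n\}$ converges uniformly to some sequence $\{t_k^{s}\}\in\mathcal{B}$ and the impulse magnitudes $\{\lambda_{ik}\},\{\lambda_{jk}\}$ are nearly preserved under the shift. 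I can also arrange $\tau_k\geq T_{1/k}$. Define
\begin{equation*}
x_i^{(k)}(t)=x_i(t+\tau_k),\qquad y_j^{(k)}(t)=y_j(t+\tau_k).
\end{equation*}
For every fixed compact interval $I\subset\mathbb{T}$, once $k$ is large enough one has $t+\tau_k\geq T_{1/k}$ for all $t\in I$, so the sequences $(x_i^{(k)})$, $(y_j^{(k)})$ are uniformly bounded on $I$ by $x_i^{\vee}+1/k$ and $y_j^{\vee}+1/k$ and bounded below by $x_i^{\wedge}-1/k$, $y_j^{\wedge}-1/k$.

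Then I would extract a convergent subsequence. The uniform bounds and the dynamic equations \eqref{e13} give uniform bounds on the delta derivatives between consecutive impulse points; combined with the uniform convergence of the impulse set $\{t_k-\tau_n\}\to\{t_k^s\}$ and the uniform almost periodicity of the coefficients, a standard Arzel\`a--Ascoli type argument on the piecewise rd-continuous class $PC_{rd}$ (as used in the convergence Definition~\ref{def27} and Lemma~\ref{lemd11}) produces a subsequence, still denoted $(x^{(k)},y^{(k)})$, converging in the sense of Definition~\ref{def27} to a limit $(x^\ast(t),y^\ast(t))\in PC_{rd}(\mathbb{T},\mathbb{R}^{n+m})$. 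Passing to the limit in \eqref{e13} (using almost periodicity of the coefficients to identify the limiting equation with \eqref{e13} itself, or with its hull) shows that $(x^\ast,y^\ast)$ is a solution of \eqref{e13}. Passing to the limit $k\to\infty$ (so $1/k\to 0$) in the bounds yields
\begin{equation*}
x_i^{\wedge}\leq x_i^\ast(t)\leq x_i^{\vee},\qquad y_j^{\wedge}\leq y_j^\ast(t)\leq y_j^{\vee},\qquad t\in\mathbb{T},
\end{equation*}
so $(x^\ast,y^\ast)\in\Omega$ and hence $\Omega\neq\emptyset$.

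The main obstacle I anticipate is the careful handling of the impulses and delays under the limiting procedure: one must ensure that the impulse points $t_k-\tau_n$ of the translated system converge uniformly in $k$ (which is exactly what $(H_3)$ plus Lemma~\ref{lemd11} secures), and that the delayed arguments $t-\tau_{il}(t+\tau_n)$, etc., remain in $\mathbb{T}$ and converge together with the coefficients. Once these uniform convergences are in place, passing to the limit in the dynamic/impulsive equation is routine, but bookkeeping the pieces together is the technical heart of the argument.
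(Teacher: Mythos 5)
Your proposal is correct and follows essentially the same route as the paper: both translate a fixed permanent solution along a sequence of almost-periods tending to $+\infty$ chosen so that the coefficients and the impulse set return to themselves, use Lemma~\ref{lem31} to confine the translates to an $\varepsilon$-enlarged box, extract a subsequence converging on compact sets, pass to the limit in the impulsive dynamic equation, and let $\varepsilon\to 0$ to place the limit trajectory in $\Omega$. The only difference is that you spell out the Arzel\`a--Ascoli/uniform-convergence bookkeeping that the paper leaves implicit.
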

\begin{proof}
By the almost periodicity of $b_i(t)$, $a_{il}(t)$, $\tau_{il}(t)$, $c_{ih}(t)$, $\delta_{ih}(t)$, $r_j(t)$, $d_{jl}(t)$, $\xi_{jl}(t)$, $e_{jh}(t)$, $\eta_{jh}(t)$, $i,l=1,2,\ldots,n$, $j,h=1,2,\ldots,m$, there exists a  sequence $\omega=\{\omega_{p}\}\subseteq \Pi$ with $\omega_{p}\rightarrow+\infty$ as $p\rightarrow+\infty$ such that for $t\neq t_{k}$, we have
\begin{eqnarray*}
b_i(t+\omega_{p})\rightarrow b_i(t),\,\,a_{il}(t+\omega_{p})\rightarrow a_{il}(t),\,\,\tau_{il}(t+\omega_{p})\rightarrow \tau_{il}(t),\\
c_{ih}(t+\omega_{p})\rightarrow c_{ih}(t),\,\,\delta_{ih}(t+\omega_{p})\rightarrow \delta_{ih}(t),\,\,r_j(t+\omega_{p})\rightarrow r_j(t),\\
d_{jl}(t+\omega_{p})\rightarrow d_{jl}(t),\,\,\xi_{jl}(t+\omega_{p})\rightarrow \xi_{jl}(t),\,\,e_{jh}(t+\omega_{p})\rightarrow e_{jh}(t),\\
\eta_{jh}(t+\omega_{p})\rightarrow \eta_{jh}(t),\,\,t_{k}-\omega_{p}\rightarrow t_{k},\,\,p\rightarrow+\infty,\\
i,l=1,2,\ldots,n,\,\,j,h=1,2,\ldots,m,
\end{eqnarray*}
and there exists a subsequence $\{k_p\}$ of $\{p\}$, $k_p\rightarrow+\infty$, $p\rightarrow+\infty$, such that $d_{ik_p}\rightarrow d_{ik}$, $d_{jk_p}\rightarrow d_{jk}$.

In view of Lemma 3.1, for all $\varepsilon>0$, there exists a $t_{1}\in\mathbb{T}$ and $t_{1}\geq t_{0}$ such that
\begin{eqnarray*}
 x^{\wedge}-\varepsilon\leq x_i(t)\leq  x^{\vee}+\varepsilon,\,\,y^{\wedge}-\varepsilon\leq y_j(t)\leq  y^{\vee}+\varepsilon,\,\,{\rm for}\ t\geq t_{1}.
\end{eqnarray*}
Write $x_{ip}(t)=x_i(t+\omega_{p})$, $y_{jp}(t)=y_j(t+\omega_{p})$ for $t\geq t_{1}$, $p= 1, 2,\ldots$.
For any positive integer $q$, it is easy to see that there exist sequences $\{x_{ip}(t):p\geq q\}$ and $\{y_{jp}(t):p\geq q\}$ such that the sequences $\{x_{ip} (t)\}$ and $\{y_{jp} (t)\}$ has subsequences, denoted by $\{x_{ip} (t)\}$ and $\{y_{jp} (t)\}$ again, converging on any finite interval of $\mathbb{T}$ as $p\rightarrow+\infty$, respectively. Thus, there are sequences $\{\tilde{x}_{i}(t)\}$ and $\{\tilde{y}_{j}(t)\}$ defined on $\mathbb{T}$ such that
\begin{eqnarray*}
x_{ip}(t)\rightarrow \tilde{x}_{i}(t),\,\,y_{jp}(t)\rightarrow \tilde{y}_{j}(t),\,\,{\rm for}\ t\in\mathbb{T},\,\,{\rm as}\ p\rightarrow+\infty.
\end{eqnarray*}
Since
{\setlength\arraycolsep{2pt}
\begin{eqnarray*}
\left\{
\begin{array}{ll}
x_{ip}^\Delta(t)=b_i(t+\omega_{p})-\sum\limits_{l=1}^na_{il}(t+\omega_{p})\exp\{x_l(t+\omega_{p}-\tau_{il}(t+\omega_{p}))\}\\
\quad\quad\quad\quad-\sum\limits_{h=1}^mc_{ih}(t+\omega_{p})\exp\{y_h(t+\omega_{p}-\delta_{ih}(t+\omega_{p}))\},\,\,t\neq t_k-\omega_{p},\,\,t\in J, \\
y_{jp}^\Delta(t)=-r_j(t+\omega_{p})+\sum\limits_{l=1}^nd_{jl}(t+\omega_{p})\exp\{x_l(t+\omega_{p}-\xi_{jl}(t+\omega_{p}))\}\\
\quad\quad\quad\,\,\,\,-\sum\limits_{h=1}^me_{jh}(t+\omega_{p})\exp\{y_h(t+\omega_{p}-\eta_{jh}(t+\omega_{p}))\},\,\,t\neq t_k-\omega_{p},\,\,t\in J, \\
x_i(t_{k}^{+}-\omega_{p})=x_i(t_{k}-\omega_{p})+\ln(1+\lambda_{ik_p}),\,\,t=t_k-\omega_{p},\\
y_j(t_{k}^{+}-\omega_{p})=y_j(t_{k}-\omega_{p})+\ln(1+\lambda_{jk_p}),\,\,t=t_k-\omega_{p},\,\,i=1,2,\ldots,n,\,\,j=1,2,\ldots,m,
\end{array}
\right.
\end{eqnarray*}}
we have
{\setlength\arraycolsep{2pt}
\begin{eqnarray*}
 \left\{%
\begin{array}{lcrcl}
\tilde{x}_i^\Delta(t)=b_i(t)-\sum\limits_{l=1}^na_{il}(t)\exp\{\tilde{x}_l(t-\tau_{il}(t))\}-\sum\limits_{h=1}^mc_{ih}(t)\exp\{\tilde{y}_h(t-\delta_{ih}(t))\},\,\,t\neq t_k,\,\,t\in J, \\
\tilde{y}_j^\Delta(t)=-r_j(t)+\sum\limits_{l=1}^nd_{jl}(t)\exp\{\tilde{x}_l(t-\xi_{jl}(t))\}\\
\quad\quad\quad\,\,\,\,-\sum\limits_{h=1}^me_{jh}(t)\exp\{\tilde{y}_h(t-\eta_{jh}(t))\},\,\,t\neq t_k,\,\,t\in J, \\
\tilde{x}_i(t_{k}^{+})=\tilde{x}_i(t_{k})+\ln(1+\lambda_{ik}),\,\,t=t_k,\\
\tilde{y}_j(t_{k}^{+})=\tilde{y}_j(t_{k})+\ln(1+\lambda_{jk}),\,\,t=t_k,\,\,i=1,2,\ldots,n,\,\,j=1,2,\ldots,m.
\end{array}\right.
\end{eqnarray*}}
We can easily see that $(\tilde{x}(t),\tilde{y}(t))$ is a solution of system $\eqref{e13}$ and $ x_i^{\wedge}-\varepsilon\leq \tilde{x}_i(t)\leq  x_i^{\vee}+\varepsilon,$ $y_j^{\wedge}-\varepsilon\leq \tilde{y}_j(t)\leq  y_j^{\vee}+\varepsilon$ for $t\in J$, $i=1,2,\ldots,n$, $j=1,2,\ldots,m$. Since $\varepsilon$ is an arbitrary small positive number, it follows that $x_i^{\wedge}\leq \tilde{x}_i(t)\leq  x_i^{\vee}$, $y_j^{\wedge}\leq \tilde{y}_j(t)\leq  y_j^{\vee}$ for $t\in J$. The proof is complete.
\end{proof}

\begin{theorem}\label{thm41}
Assume that $(H_{1})$--$(H_{4})$ hold. Suppose further that
\begin{itemize}
 \item  [$(H_{5})$] $(x,y)\in \Omega^{\ast}$, where
  \begin{eqnarray*}
  \Omega^{\ast}=\bigg\{(x,y)&|&x,y\in PC[\mathbb{T}^{+}, \Omega], \sum\limits_{i=1}^n|x_i(s)|+\sum\limits_{j=1}^m|y_j(s)|\\&&\leq\sum\limits_{i=1}^n|x_i(t)|+\sum\limits_{j=1}^m|y_j(t)|, s\in(-\infty,t], t\geq 0\bigg\};
   \end{eqnarray*}
 \item  [$(H_{6})$]
    $\tau^{\Delta}=\max\limits_{1\leq i,l\leq n}\sup\limits_{t\in\mathbb{T}}\{\tau_{il}^{\Delta}(t)\}$, $\delta^{\Delta}=\max\limits_{1\leq i,h\leq n,}\sup\limits_{t\in\mathbb{T}}\{\delta_{ih}^{\Delta}(t)\}$, $\xi^{\Delta}=\max\limits_{1\leq j,l\leq n}\sup\limits_{t\in\mathbb{T}}\{\xi_{jl}^{\Delta}(t)\}$, $\eta^{\Delta}=\max\limits_{1\leq j,h\leq n,}\sup\limits_{t\in\mathbb{T}}\{\eta_{jh}^{\Delta}(t)\}$, $1-\tau^{\Delta}>0$, $1-\delta^{\Delta}>0$, $1-\xi^{\Delta}>0$, $1-\eta^{\Delta}>0$;
 \item  [$(H_{7})$]  $\gamma>0$ and $-\gamma\in\mathcal{R}^{+}$, where $\gamma=\min_{1\leq i\leq n,1\leq j\leq m}\{\gamma_i,\gamma_j\}$, and
    \begin{eqnarray*}
\gamma_i&=&\sum\limits_{l=1}^na_{li}^{L}e^{x_{i}^{\wedge}}-2\bar{\mu}\bigg(\sum\limits_{l=1}^na_{li}^{U}e^{x_{i}^{\vee}}\bigg)^2
-\frac{1}{1-\tau^{\Delta}}\bigg(2\bar{\mu}\sum\limits_{l=1}^na_{li}^{U}e^{x_{i}^{\vee}}+1\bigg)
\bigg(\sum\limits_{l=1}^na_{li}^{U}e^{x_{i}^{\vee}}\bigg)^{2}\nonumber\\
&&\times(2\tau^{+}-\tau^{-})-\frac{1}{1-\xi^{\Delta}}\bigg(2\bar{\mu}\sum\limits_{l=1}^na_{li}^{U}e^{x_{i}^{\vee}}+1\bigg)
\sum\limits_{j=1}^mc_{ij}^{U}e^{y_{j}^{\vee}}\sum\limits_{i=1}^nd_{ji}^Ue^{x_{i}^{\vee}}(\xi^{+}+\delta^{+}-\xi^{-})\nonumber\\
&&-\sum\limits_{j=1}^md_{ji}^{U}e^{x_{i}^{\vee}}\bigg(2\bar{\mu}\sum\limits_{h=1}^me_{hj}^{U}e^{y_{j}^{\vee}}+1\bigg)
-\frac{1}{1-\xi^{\Delta}}\sum\limits_{j=1}^md_{ji}^{U}e^{x_{i}^{\vee}}\bigg(2\bar{\mu}\sum\limits_{h=1}^me_{hj}^{U}e^{y_{j}^{\vee}}+1\bigg)
\nonumber\\
&&\times\sum\limits_{h=1}^me_{hj}^{U}e^{y_{j}^{\vee}}(\eta^++\xi^+-\xi^-)-\frac{1}
{1-\tau^{\Delta}}\sum\limits_{j=1}^md_{ji}^{U}e^{x_{i}^{\vee}}\nonumber\\
&&\times\bigg(2\bar{\mu}\sum\limits_{h=1}^me_{hj}^{U}e^{y_{j}^{\vee}}+1\bigg)
\sum\limits_{l=1}^na_{li}^{U}e^{x_{i}^{\vee}}(\tau^++\xi^+-\tau^-),\\
\tilde{\gamma}_j&=&\sum\limits_{h=1}^me_{hj}^{L}e^{y_{j}^{\wedge}}-2\bar{\mu}\bigg(\sum\limits_{h=1}^me_{hj}^{U}e^{y_{j}^{\vee}}\bigg)^2
-\frac{1}{1-\eta^{\Delta}}\bigg(2\bar{\mu}\sum\limits_{h=1}^me_{hj}^{U}e^{y_{j}^{\vee}}+1\bigg)
\bigg(\sum\limits_{h=1}^me_{hj}^{U}e^{y_{j}^{\vee}}\bigg)^{2}\nonumber\\
&&\times(2\eta^{+}-\eta^{-})-\frac{1}{1-\delta^{\Delta}}\bigg(2\bar{\mu}\sum\limits_{h=1}^me_{hj}^{U}e^{y_{j}^{\vee}}+1\bigg)
\sum\limits_{i=1}^nd_{ji}^Ue^{x_{i}^{\vee}}\sum\limits_{j=1}^mc_{ij}^{U}e^{y_{j}^{\vee}}(\delta^++\eta^+-\eta^-)\nonumber\\
&&-\sum\limits_{i=1}^nc_{ij}^{U}e^{y_{j}^{\vee}}\bigg(2\bar{\mu}\sum\limits_{l=1}^na_{li}^{U}e^{x_{i}^{\vee}}+1\bigg)
-\frac{1}{1-\delta^{\Delta}}\sum\limits_{i=1}^nc_{ij}^{U}e^{y_{j}^{\vee}}\bigg(2\bar{\mu}\sum\limits_{l=1}^na_{li}^{U}e^{x_{i}^{\vee}}+1\bigg)
\nonumber\\
&&\times\sum\limits_{l=1}^na_{li}^{U}e^{x_{i}^{\vee}}(\tau^{+}+\delta^{+}-\delta^{-})-\frac{1}{1-\eta^{\Delta}}\sum\limits_{i=1}^nc_{ij}^{U}e^{y_{j}^{\vee}}\nonumber\\
&&\times\bigg(2\bar{\mu}
\sum\limits_{l=1}^na_{li}^{U}e^{x_{i}^{\vee}}+1\bigg)\sum\limits_{h=1}^me_{hj}^{U}e^{y_{j}^{\vee}}(\eta^{+}+\delta^{+}-\eta^{-}).
\end{eqnarray*}
\end{itemize}
Then \eqref{e13} has a unique almost periodic solution $(x(t),y(t))$ that is uniformly asymptotically stable.
\end{theorem}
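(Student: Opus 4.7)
The plan is to apply Lemma \ref{lem52} to system \eqref{e13} with $S$ taken to be the compact invariant set $\Omega$ produced by Theorem \ref{thm31}; the lemma immediately preceding the theorem (which constructs a solution lying in $\Omega$ via translation limits of the almost periodic coefficients) guarantees $\Omega\neq\emptyset$, so the ``$x(t;t_{0},\varphi_{0})\subset S$'' hypothesis of Lemma \ref{lem52} is available. For two solutions $(x,y)$ and $(x^{*},y^{*})$ of \eqref{e13} with values in $\Omega$, I would use a Lyapunov functional of the form
\begin{eqnarray*}
V(t,(x,y),(x^{*},y^{*}))&=&\sum_{i=1}^{n}|x_{i}(t)-x_{i}^{*}(t)|+\sum_{j=1}^{m}|y_{j}(t)-y_{j}^{*}(t)|\\
&&+V_{\tau}(t)+V_{\delta}(t)+V_{\xi}(t)+V_{\eta}(t),
\end{eqnarray*}
where $V_{\tau},V_{\delta},V_{\xi},V_{\eta}$ are delay compensators whose typical summand is
\begin{eqnarray*}
\frac{1}{1-\tau^{\Delta}}\int_{t-\tau_{il}(t)}^{t}K^{\tau}_{il}\bigl|e^{x_{l}(s)}-e^{x_{l}^{*}(s)}\bigr|\Delta s,
\end{eqnarray*}
with positive coefficients $K^{\tau}_{il}$ (and analogous ones for $\delta,\xi,\eta$) chosen so that they absorb, after using the $\Delta$-differentiation rule for integrals with variable upper limit, the delayed-argument contributions in $V^{\Delta}$; the validity of the substitution is exactly $(H_{6})$.

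Conditions (i) and (ii) of Lemma \ref{lem52} are essentially routine: on $\Omega\times\Omega$ both $V$ and its majorant/minorant are linear in $\|x-x^{*}\|+\|y-y^{*}\|$ with constants depending only on the bounds $e^{x_{i}^{\vee}},e^{y_{j}^{\vee}}$ and on the delay lengths $\tau^{+},\delta^{+},\xi^{+},\eta^{+}$, so I would take $a(r)=r$ and a suitable linear $b$; the triangle inequality gives (ii). Condition (iii) at an impulse $t_{k}$ is immediate from $(H_{2})$: both solutions get shifted by the same additive amount $\ln(1+\lambda_{ik})$ (resp.\ $\ln(1+\lambda_{jk})$), so every $|x_{i}-x_{i}^{*}|,|y_{j}-y_{j}^{*}|$ is unchanged at $t_{k}$, and the integral kernels are continuous across $t_{k}$, giving equality in (iii).

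The step I expect to be the main obstacle is the verification of (iv), $D^{+}V^{\Delta}(t)\le-\gamma V(t)$ for $(x,y),(x^{*},y^{*})\in\Omega^{*}$ as in $(H_{5})$. My plan is: first differentiate $|x_{i}-x_{i}^{*}|$ and $|y_{j}-y_{j}^{*}|$ in the time-scale sense, which produces a $\mathrm{sgn}$-weighted term in $(x_{i}-x_{i}^{*})^{\Delta}$ together with an explicit graininess correction of order $\bar{\mu}((x_{i}-x_{i}^{*})^{\Delta})^{2}$; then subtract the two copies of \eqref{e13} and apply $e^{a}-e^{b}=e^{\theta}(a-b)$ with $e^{\theta}$ bounded by $e^{x_{l}^{\vee}}$ or $e^{y_{h}^{\vee}}$ on $\Omega$, thereby converting every exponential difference into an absolute difference of delayed state values weighted by one of the factors $a_{li}^{U}e^{x_{i}^{\vee}},e_{hj}^{U}e^{y_{j}^{\vee}},c_{ij}^{U}e^{y_{j}^{\vee}},d_{jl}^{U}e^{x_{i}^{\vee}}$ already appearing in $(H_{7})$. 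The compensators $V_{\tau},V_{\delta},V_{\xi},V_{\eta}$ are then calibrated so that their $\Delta$-derivatives cancel the delayed absolute values against the corresponding non-delayed ones, leaving the diagonal damping $-\sum_{l}a_{li}^{L}e^{x_{i}^{\wedge}}|x_{i}-x_{i}^{*}|$ and $-\sum_{h}e_{hj}^{L}e^{y_{j}^{\wedge}}|y_{j}-y_{j}^{*}|$ together with precisely the graininess/cross corrections $2\bar{\mu}(\sum_{l}a_{li}^{U}e^{x_{i}^{\vee}})^{2}$, the $(2\tau^{+}-\tau^{-})$-type factors, and the remaining products of upper-bound coefficients that appear in the explicit definitions of $\gamma_{i}$ and $\tilde{\gamma}_{j}$. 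Hypothesis $(H_{7})$ is then exactly the statement that the net coefficient of each $|x_{i}-x_{i}^{*}|,|y_{j}-y_{j}^{*}|$ majorizes $\gamma$, hence $D^{+}V^{\Delta}\le-\gamma V$ with $-\gamma\in\mathcal{R}^{+}$. The bookkeeping --- matching every cross term to its compensator before summing --- is delicate but mechanical; once (i)--(iv) are in place, Lemma \ref{lem52} immediately yields the unique, uniformly asymptotically stable almost periodic solution of \eqref{e13}.
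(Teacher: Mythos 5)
Your overall strategy --- apply Lemma \ref{lem52} on the invariant region $\Omega$ (nonempty by the preceding lemma), verify conditions (i)--(iii) for a sum-of-absolute-differences functional, and establish (iv) with delay compensators --- is the same as the paper's, and your treatment of (i)--(iii), in particular the observation that the common additive impulse $\ln(1+\lambda_{ik})$ makes condition (iii) hold with equality, matches the paper. The gap is in (iv). In system \eqref{e13} the self-regulation acts \emph{only} through delayed arguments: after subtracting the two copies and applying the mean value theorem, the equation for $u_i=x_i-x_i^{*}$ reads $u_i^{\Delta}(t)=-\sum_{l}a_{il}(t)e^{\zeta_l(t)}u_l(t-\tau_{il}(t))-\cdots$, with no instantaneous feedback term. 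Your single-integral compensators $\frac{1}{1-\tau^{\Delta}}\int_{t-\tau_{il}(t)}^{t}K^{\tau}_{il}|e^{x_l(s)}-e^{x_l^{*}(s)}|\Delta s$ would, upon $\Delta$-differentiation, cancel the delayed absolute value $|u_l(t-\tau_{il}(t))|$ only at the price of introducing the \emph{positive} instantaneous term $+K^{\tau}_{il}e^{x_l^{\vee}}|u_l(t)|$; nowhere in this scheme does the negative diagonal term $-\sum_{l}a_{li}^{L}e^{x_i^{\wedge}}|u_i(t)|$ appear, so the net coefficient of $|u_i(t)|$ in $D^{+}V^{\Delta}$ remains positive and the estimate $D^{+}V^{\Delta}\le-\gamma V$ cannot be reached.

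The paper extracts the damping differently: it writes $u_i(t-\tau_{li}(t))=u_i(t)+[u_i(t-\tau_{li}(t))-u_i(t)]$, uses the first piece (together with $u_i(t)=u_i^{\sigma}(t)-\mu(t)u_i^{\Delta}(t)$) to produce the negative term $-\sum_{l}a_{li}^{L}e^{x_i^{\wedge}}|u_i(t)|$ plus a graininess correction that is \emph{linear} in $|u_i^{\Delta}(t)|$ (not quadratic, as you suggest), and bounds the second piece by $\int_{t-\tau_{li}(t)}^{t}|u_i^{\Delta}(s)|\Delta s$, into which the equation is substituted a \emph{second} time. This double substitution produces doubly delayed quantities such as $\int_{t-\tau_{li}(t)}^{t}|u_i(s-\tau_{li}(s))|\Delta s$, and it is these that are absorbed by the double-integral compensators $\int_{-2\tau^{+}}^{-\tau^{-}}\int_{s+t}^{t}|u_i(r)|\Delta r\Delta s$, etc.; the factors $(2\tau^{+}-\tau^{-})$, $(\xi^{+}+\delta^{+}-\xi^{-})$ and the squared sums appearing in $\gamma_i$ and $\tilde{\gamma}_j$ of $(H_{7})$ come precisely from the lengths of the outer integration intervals. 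Your plan therefore needs both the delay-decomposition step and the two-level (double-integral) compensators; with single integrals and no decomposition the constants of $(H_{7})$ cannot be matched and, more fundamentally, no negative damping is produced.
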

\begin{proof}
According to Lemma \ref{lem31}, every solution $(x(t),y(t))$ of system \eqref{e13} satisfies that $x_i^{\wedge}\leq x_i(t)\leq  x_i^{\vee}$, $ y_j^{\wedge}\leq y_j(t)\leq  y_j^{\vee}$. Hence, $|x_i(t)|\leq K_i$, $|y_j(t)|\leq B_i$ where $K_i=\max\{| x_i^{\vee}|, | x_i^\wedge|\}$, $B_i=\max\{| y_j^{\vee}|, | y_j^\wedge|\}$, $i=1,2,\ldots,n$, $j=1,2,\ldots,m$. Denote $\|(x,y)\|=\sup\limits_{t\in\mathbb{T}}\sum\limits_{i=1}^n|x_i(t)|+\sup\limits_{t\in\mathbb{T}}\sum\limits_{j=1}^m|y_j(t)|$.
 Suppose that
$X_1=(x(t),y(t))$, $X_2=(x^{\ast}(t),y^{\ast}(t))$ are any two positive solutions of system \eqref{e13}, then $\|X_1\|\leq C$ and $\|X_2\|\leq C$, where $C=\sum\limits_{i=1}^nK_i+\sum\limits_{j=1}^mB_i$. In view of system \eqref{e13}, we have
{\setlength\arraycolsep{2pt}\begin{eqnarray}\label{e42}
\left\{
\begin{array}{lll}
x_i^\Delta(t)=b_i(t)-\sum\limits_{l=1}^na_{il}(t)\exp\{x_l(t-\tau_{il}(t))\}-\sum\limits_{h=1}^mc_{ij}(t)\exp\{y_h(t-\delta_{ih}(t))\},t\neq t_k,t\in J, \\
y_j^\Delta(t)=-r_j(t)+\sum\limits_{l=1}^nd_{jl}(t)\exp\{x_l(t-\xi_{jl}(t))\}-\sum\limits_{h=1}^me_{jh}(t)\exp\{y_h(t-\eta_{jh}(t))\},t\neq t_k,t\in J, \\
x_i(t_{k}^{+})=x_i(t_{k})+\ln(1+\lambda_{ik}),t=t_k,\\
y_j(t_{k}^{+})=y_j(t_{k})+\ln(1+\lambda_{jk}),t=t_k,\\
{x_i^\ast}^\Delta(t)=b_i(t)-\sum\limits_{l=1}^na_{il}(t)\exp\{x_l^\ast(t-\tau_{il}(t))\}-\sum\limits_{h=1}^mc_{ih}(t)\exp\{y_h^\ast(t-\delta_{ih}(t))\},t\neq t_k,t\in J, \\
{y_j^\ast}^\Delta(t)=-r_j(t)+\sum\limits_{l=1}^nd_{jl}(t)\exp\{x_l^\ast(t-\xi_{jl}(t))\}-\sum\limits_{h=1}^me_{jh}(t)\exp\{y_h^\ast(t-\eta_{jh}(t))\},t\neq t_k,,t\in J, \\
{x_i^\ast}(t_{k}^{+})=x_i^\ast(t_{k})+\ln(1+\lambda_{ik}),t=t_k,\\
{y_j^\ast}(t_{k}^{+})=y_j^\ast(t_{k})+\ln(1+\lambda_{jk}),t=t_k,i=1,2,\ldots,n,j=1,2,\ldots,m.
\end{array}
\right.
\end{eqnarray}}

Consider the Lyapunov function $V(t, X_1, X_2)$ on $\mathbb{T}^{+}\times\Omega\times\Omega$ defined by
\begin{eqnarray*}
V(t, X_1, X_2)=\sum\limits_{i=1}^n|x_i(t)-x_i^{\ast}(t)|+\sum\limits_{j=1}^m|y_j(t)-y_j^{\ast}(t)|.
\end{eqnarray*}
Let the norm
\[\|X_1-X_2\|=\sup_{t\in\mathbb{T}}\sum\limits_{i=1}^n|x_i(t)-x_i^{\ast}(t)|+\sup_{t\in\mathbb{T}}\sum\limits_{j=1}^m|y_j(t)-y_j^{\ast}(t)|.\]
It is easy to see that
there exist two constants $C_{1}>0$, $C_{2}>0$ such that  $(C_{1}\|X_1-X_2\|)^{2}\leq V(t, X_1, X_2)\leq (C_{2}\|X_1-X_2\|)^{2}$. Let $a, b\in C(\mathbb{R}^{+}, \mathbb{R}^{+})$, $a(x)=C_{1}^{2}x^{2}$, $b(x)=C_{2}^{2}x^{2}$, so Condition $(i)$ of Lemma 4.1 is satisfied. Besides,
\begin{eqnarray*}
&&|V(t, X_1, X_2)-V(t, \tilde{X}_1, \tilde{X}_2)|\\
&=&\bigg|\sum\limits_{i=1}^n|x_i(t)-x_i^{\ast}(t)|+\sum\limits_{j=1}^m|y_j(t)-y_j^{\ast}(t)|
-\sum\limits_{i=1}^n|\tilde{x}_i(t)-\tilde{x}_i^{\ast}(t)|-\sum\limits_{j=1}^m|\tilde{y}_j(t)-\tilde{y}_j^{\ast}(t)|\bigg|\\
&\leq&\bigg|\sum\limits_{i=1}^n|(x_i(t)-\tilde{x}_i(t))-(x_i^{\ast}(t)-\tilde{x}_i^{\ast}(t))|
+\sum\limits_{j=1}^m|(y_j(t)-\tilde{y}_j(t))-(y_j^{\ast}(t)-\tilde{y}_j^{\ast}(t))|\bigg|\\
&\leq&\sum\limits_{i=1}^n|x_i(t)-\tilde{x}_i(t)|+\sum\limits_{j=1}^m|y_j(t)-\tilde{y}_j(t)|
+\sum\limits_{i=1}^n|x_i^{\ast}(t)-\tilde{x}_i^{\ast}(t)|+\sum\limits_{j=1}^m|y_j^{\ast}(t)-\tilde{y}_j^{\ast}(t)|\\
&\leq&L\big(\|X_1-\tilde{X}_1\|+\|X_2-\tilde{X}_2\|\big),
\end{eqnarray*}
where $L=1$, so Condition $(ii)$ of Lemma \ref{lem52} is also satisfied.

On the other hand for $t=t_{k}$, we have
\begin{eqnarray*}
V(t_{k}^{+}, X(t_{k}^{+}), Y(t_{k}^{+}))&=&\sum\limits_{i=1}^n|x_i(t_{k}^{+})-x_i^{\ast}(t_{k}^{+})|+\sum\limits_{j=1}^m|y_j(t_{k}^{+})-y_j^{\ast}(t_{k}^{+})|\\
&=&\sum\limits_{i=1}^n|x_i(t_{k})-x_i^{\ast}(t_{k})|+\sum\limits_{j=1}^m|y_i(t_{k})-y_i^{\ast}(t_{k})|\\
&=&V(t_{k}, X(t_{k}), Y(t_{k})),
\end{eqnarray*}
then Condition $(iii)$ of Lemma \ref{lem52} is also satisfied.

In view of system (4.2), we have
{\setlength\arraycolsep{2pt}
\begin{eqnarray}\label{e43}
\left\{
\begin{array}{lll}
(x_{i}(t)-x_{i}^\ast(t))^{\Delta}=-\sum\limits_{l=1}^na_{il}(t)(\exp\{x_l(t-\tau_{il}(t))\}-\exp\{x_l^\ast(t-\tau_{il}(t))\})\\
\quad\quad\quad\quad\quad\quad\quad\quad\,-\sum\limits_{h=1}^mc_{ih}(t)(\exp\{y_h(t-\delta_{ih}(t))\}-\exp\{y_h^\ast(t-\delta_{ih}(t))\}),t\neq t_{k},t\in J,\\
(y_{j}(t)-y_{j}^\ast(t))^{\Delta}=\sum\limits_{l=1}^nd_{jl}(t)(\exp\{x_l(t-\xi_{jl}(t))\}-\exp\{x_l^\ast(t-\xi_{jl}(t))\})\\
\quad\quad\quad\quad\quad\quad\quad\quad\,-\sum\limits_{h=1}^me_{jh}(t)(\exp\{y_h(t-\eta_{jh}(t))\}-\exp\{y_h^\ast(t-\eta_{jh}(t))\}),t\neq t_{k},t\in J,\\
x_{i}(t_{k}^{+})-x_{i}^\ast(t_{k}^{+})=x_{i}(t_{k})-x_{i}^\ast(t_{k}),t=t_{k},\\
y_{j}(t_{k}^{+})-y_{j}^\ast(t_{k}^{+})=y_{j}(t_{k})-y_{j}^\ast(t_{k}),t=t_{k},i=1,2,\ldots,n,j=1,2,\ldots,m,k\in\mathbb{N}.
\end{array}
\right.
\end{eqnarray}}

Using the mean value theorem, we get
{\setlength\arraycolsep{2pt}
\begin{eqnarray}\label{e44}
\left\{
\begin{array}{lll}
\exp\{x_l(t-\tau_{il}(t))\}-\exp\{x_l^\ast(t-\tau_{il}(t))\}=e^{\zeta_{l}(t)}(x_{l}(t-\tau_{il}(t))-x_{l}^\ast(t-\tau_{il}(t))),\\
\exp\{x_l(t-\xi_{jl}(t))\}-\exp\{x_l^\ast(t-\xi_{jl}(t))\}=e^{\zeta_{l}^\ast(t)}(x_{l}(t-\xi_{jl}(t))-x_{l}^\ast(t-\xi_{jl}(t))),\\
\exp\{y_h(t-\delta_{ih}(t))\}-\exp\{y_h^\ast(t-\delta_{ih}(t))\}=e^{\omega_{h}(t)}(y_{h}(t-\delta_{ih}(t))-y_{h}^\ast(t-\delta_{ih}(t))),\\
\exp\{y_h(t-\eta_{jh}(t))\}-\exp\{y_h^\ast(t-\eta_{jh}(t))\}=e^{\omega_{h}^\ast(t)}(y_{h}(t-\eta_{jh}(t))-y_{h}^\ast(t-\eta_{jh}(t))),
\end{array}
\right.
\end{eqnarray}}
where $\xi_{l}(t)$ lies between $x_{l}(t-\tau_{il}(t))$ and $x_{l}^\ast(t-\tau_{il}(t))$, $\xi_{l}^\ast(t)$ lies between $x_{l}(t-\xi_{jl}(t))$ and $x_{l}^\ast(t-\xi_{jl}(t))$, $\omega_{h}(t)$ lies between $y_h(t-\delta_{ih}(t))$ and $y_h^\ast(t-\delta_{ih}(t))$, $\omega_{h}^\ast(t)$ lies between $y_h(t-\eta_{jh}(t))$ and $y_h^\ast(t-\eta_{jh}(t))$, $i,l=1,2,\ldots,n,$ $j,h=1,2,\ldots,m$.

Let $u_{i}(t)=x_{i}(t)-x_{i}^\ast(t)$, $v_{j}(t)=y_{j}(t)-y_{j}^\ast(t)$, then by \eqref{e44} and \eqref{e43} can be written as
{\setlength\arraycolsep{2pt}
\begin{eqnarray*}
\left\{
\begin{array}{lll}
u_{i}^{\Delta}(t)=-\sum\limits_{l=1}^na_{il}(t)e^{\zeta_{l}(t)}u_{l}(t-\tau_{il}(t))-\sum\limits_{h=1}^mc_{ih}(t)e^{\omega_{h}(t)}v_{h}(t-\delta_{ih}(t)),\,\,t\neq t_{k},\,\,t\in J,\\
v_{j}^{\Delta}(t)=\sum\limits_{l=1}^nd_{jl}(t)e^{\zeta_{l}^\ast(t)}u_{l}(t-\xi_{jl}(t))-\sum\limits_{h=1}^me_{jh}(t)e^{\omega_{h}^\ast(t)}v_{h}(t-\eta_{jh}(t)),\,\,t\neq t_{k},\,\,t\in J,\\
u_{i}(t_{k}^{+})=u_{i}(t_{k}),\,\,t=t_{k},\\
v_{j}(t_{k}^{+})=v_{j}(t_{k}),\,\,t=t_{k},\,\,i=1,2,\ldots,n,\,\,j=1,2,\ldots,m,\,\,k\in\mathbb{N}.
\end{array}
\right.
\end{eqnarray*}}

Consider a Lyapunov function
\[\tilde{V}(t)=\sum_{i=1}^{n}V_{i}(t)+\sum_{j=1}^{m}W_{j}(t),\]
where for $i=1,2,\ldots,n,j=1,2,\ldots,m$,
\[V_{i}(t)=V_{i1}(t)+V_{i2}(t)+V_{i3}(t)+V_{i4}(t)+V_{i5}(t),\]
\[W_{j}(t)=W_{j1}(t)+W_{j2}(t)+W_{j3}(t)+W_{j4}(t)+W_{j5}(t),\]
{\setlength\arraycolsep{2pt}\begin{eqnarray*}
&&V_{i1}(t)=|u_{i}(t)|,\\
&&V_{i2}(t)=\frac{1}{1-\tau^{\Delta}}\bigg(2\bar{\mu}\sum\limits_{l=1}^na_{li}^{U}e^{x_{i}^{\vee}}+1\bigg)
\bigg(\sum\limits_{l=1}^na_{li}^{U}e^{x_{i}^{\vee}}\bigg)^{2}
\int_{-2\tau^+}^{-\tau^-}\int_{s+t}^{t}|u_{i}(r)|\Delta r\Delta s,\\
&&V_{i3}(t)=\frac{1}{1-\delta^{\Delta}}\bigg(2\bar{\mu}\sum\limits_{l=1}^na_{li}^{U}e^{x_{i}^{\vee}}+1\bigg)\sum\limits_{l=1}^na_{li}^{U}e^{x_{i}^{\vee}}
\sum\limits_{j=1}^mc_{ij}^{U}e^{y_{j}^{\vee}}\int_{-\tau^+-\delta^+}^{-\delta^-}\int_{s+t}^{t}|v_{j}(r)|\Delta r\Delta s,\\
&&V_{i4}(t)=\frac{1}{1-\xi^{\Delta}}\bigg(2\bar{\mu}\sum\limits_{l=1}^na_{li}^{U}e^{x_{i}^{\vee}}+1\bigg)\sum\limits_{j=1}^mc_{ij}^{U}e^{y_{j}^{\vee}}
\sum\limits_{i=1}^nd_{ji}^Ue^{x_{i}^{\vee}}\int_{-\delta^+-\xi^+}^{-\xi^-}\int_{s+t}^{t}|u_{i}(r)|\Delta r\Delta s,\\
&&V_{i5}(t)=\frac{1}{1-\eta^{\Delta}}\bigg(2\bar{\mu}\sum\limits_{l=1}^na_{li}^{U}e^{x_{i}^{\vee}}+1\bigg)
\sum\limits_{j=1}^mc_{ij}^{U}e^{y_{j}^{\vee}}\sum\limits_{h=1}^me_{hj}^{U}e^{y_{j}^{\vee}}
\int_{-\delta^+-\eta^+}^{-\eta^-}\int_{s+t}^{t}|v_{j}(r)|\Delta r\Delta s,\\
&&W_{j1}(t)=|v_{j}(t)|,\\
&&W_{j2}(t)=\frac{1}{1-\eta^{\Delta}}\bigg(2\bar{\mu}\sum\limits_{h=1}^me_{hj}^{U}e^{y_{j}^{\vee}}+1\bigg)
\bigg(\sum\limits_{h=1}^me_{hj}^{U}e^{y_{j}^{\vee}}\bigg)^{2}\int_{-2\eta^+}^{-\eta^-}\int_{s+t}^{t}|v_{j}(r)|\Delta r\Delta s,\\
&&W_{j3}(t)=\frac{1}{1-\xi^{\Delta}}\bigg(2\bar{\mu}\sum\limits_{h=1}^me_{hj}^{U}e^{y_{j}^{\vee}}+1\bigg)
\sum\limits_{h=1}^me_{hj}^{U}e^{y_{j}^{\vee}}\sum\limits_{i=1}^nd_{ji}^{U}e^{x_{i}^{\vee}}
\int_{-\eta^+-\xi^+}^{-\xi^-}\int_{s+t}^{t}|u_{i}(r)|\Delta r\Delta s,\\
&&W_{j4}(t)=\frac{1}{1-\tau^{\Delta}}\bigg(2\bar{\mu}\sum\limits_{h=1}^me_{hj}^{U}e^{y_{j}^{\vee}}+1\bigg)
\sum\limits_{i=1}^nd_{ji}^Ue^{x_{i}^{\vee}}\sum\limits_{l=1}^na_{li}^{U}e^{x_{i}^{\vee}}
\int_{-\tau^+-\xi^+}^{-\tau^-}\int_{s+t}^{t}|u_{i}(r)|\Delta r\Delta s,\\
&&W_{j5}(t)=\frac{1}{1-\delta^{\Delta}}\bigg(2\bar{\mu}\sum\limits_{h=1}^me_{hj}^{U}e^{y_{j}^{\vee}}+1\bigg)
\sum\limits_{i=1}^nd_{ji}^Ue^{x_{i}^{\vee}}\sum\limits_{j=1}^mc_{ij}^{U}e^{y_{j}^{\vee}}\int_{-\delta^+-\eta^+}^{-\eta^-}\int_{s+t}^{t}|v_{j}(r)|\Delta r\Delta s.
\end{eqnarray*}}
For $i=1,2,\ldots,n,j=1,2,\ldots,m$, we have
{\setlength\arraycolsep{2pt}
\begin{eqnarray}\label{e45}
&&D^{+}V_{i1}^{\Delta}(t)\nonumber\\
&\leq&
\mathrm{sign}(u_{i}^{\sigma}(t))u_{i}^{\Delta}(t)\nonumber\\
&=&
\mathrm{sign}(u_{i}^{\sigma}(t))\bigg[-\sum\limits_{l=1}^na_{il}(t)e^{\zeta_{l}(t)}u_{l}(t-\tau_{il}(t))
-\sum\limits_{h=1}^mc_{ih}(t)e^{\omega_{h}(t)}v_{h}(t-\delta_{ih}(t))\bigg]\nonumber\\
&=&
-\mathrm{sign}(u_{i}^{\sigma}(t))\sum\limits_{l=1}^na_{li}(t)e^{\zeta_{i}(t)}u_{i}(t-\tau_{li}(t))
-\mathrm{sign}(u_{i}^{\sigma}(t))\sum\limits_{h=1}^mc_{ih}(t)e^{\omega_{h}(t)}v_{h}(t-\delta_{ih}(t))\nonumber\\
&=&
\bigg\{-\mathrm{sign}(u_{i}^{\sigma}(t))\sum\limits_{l=1}^na_{li}(t)e^{\zeta_{i}(t)}[u_{i}(t)+u_{i}(t-\tau_{li}(t))-u_{i}(t)]\nonumber\\
&&-\sum\limits_{h=1}^mc_{ih}(t)e^{\omega_{h}(t)}u_{i}^{\sigma}(t)[v_{h}(t)+v_{h}(t-\delta_{ih}(t))-v_{h}(t)]\bigg\}\nonumber\\
&\leq&
\bigg\{-\mathrm{sign}(u_{i}^{\sigma}(t))\sum\limits_{l=1}^na_{li}(t)e^{\zeta_{i}(t)}[u_{i}^{\sigma}(t)-\mu(t)u_{i}^{\Delta}(t)]
+\sum\limits_{l=1}^na_{li}^{U}e^{x_{i}^{\vee}}\int_{t-\tau_{li}(t)}^{t}|u_{i}^{\Delta}(s)|\Delta s\nonumber\\
&&+{\displaystyle\sum\limits_{h=1}^mc_{ih}^{U}e^{y_{h}^{\vee}}|v_{h}(t)|}
+{\displaystyle\sum\limits_{j=1}^mc_{ij}^{U}e^{y_{j}^{\vee}}\int_{t-\delta_{ij}(t)}^{t}|v_{j}^{\Delta}(s)|\Delta s}\bigg\}\nonumber\\
&\leq&
\bigg\{-\sum\limits_{l=1}^na_{li}^{L}e^{x_{i}^{\wedge}}|u_{i}(t)|+2\bar{\mu} \sum\limits_{l=1}^na_{li}^{U}e^{x_{i}^{\vee}}|u_{i}^{\Delta}(t)|+\sum\limits_{l=1}^na_{li}^{U}e^{x_{i}^{\vee}}
\int_{t-\tau_{li}(t)}^{t}|u_{i}^{\Delta}(s)|\Delta s\nonumber\\
&&+\sum\limits_{h=1}^mc_{ih}^{U}e^{y_{h}^{\vee}}|v_{h}(t)|
+\sum\limits_{j=1}^mc_{ij}^{U}e^{y_{j}^{\vee}}\int_{t-\delta_{ij}(t)}^{t}|v_{j}^{\Delta}(s)|\Delta s\bigg\}\nonumber\\
&\leq&
\bigg\{-\sum\limits_{l=1}^na_{li}^{L}e^{x_{i}^{\wedge}}|u_{i}(t)|+\sum\limits_{l=1}^na_{li}^{U}e^{x_{i}^{\vee}}\int_{t-\tau_{li}(t)}^{t}|u_{i}^{\Delta}(s)|\Delta s+{\displaystyle\sum\limits_{j=1}^mc_{ij}^{U}e^{y_{j}^{\vee}}\int_{t-\delta_{ij}(t)}^{t}|v_{j}^{\Delta}(s)|\Delta s}\nonumber\\
&&+2\bar{\mu}
\sum\limits_{l=1}^na_{li}^{U}e^{x_{i}^{\vee}}\bigg|-\sum\limits_{l=1}^na_{il}(t)e^{\zeta_{l}(t)}u_{l}(t-\tau_{il}(t))\nonumber\\
&&
-\sum\limits_{h=1}^mc_{ih}(t)e^{\omega_{h}(t)}v_{h}(t-\delta_{ih}(t))\bigg|
+{\displaystyle\sum\limits_{j=1}^mc_{ij}^{U}e^{y_{j}^{\vee}}|v_{j}(t)|}\bigg\}\nonumber\\
&\leq&
\bigg\{-\sum\limits_{l=1}^na_{li}^{L}e^{x_{i}^{\wedge}}|u_{i}(t)|+\sum\limits_{l=1}^na_{li}^{U}e^{x_{i}^{\vee}}\int_{t-\tau_{li}(t)}^{t}|u_{i}^{\Delta}(s)|\Delta s+{\displaystyle\sum\limits_{j=1}^mc_{ij}^{U}e^{y_{j}^{\vee}}\int_{t-\delta_{ij}(t)}^{t}|v_{j}^{\Delta}(s)|\Delta s}+2\bar{\mu} \nonumber\\
&&\times\bigg(\sum\limits_{l=1}^na_{li}^{U}e^{x_{i}^{\vee}}\bigg)^2|u_{i}(t-\tau_{li}(t))|
+2\bar{\mu}\sum\limits_{l=1}^na_{li}^{U}e^{x_{i}^{\vee}}\sum\limits_{j=1}^mc_{ij}^{U}e^{y_{j}^{\vee}}|v_{j}(t-\delta_{ij}(t))|
+{\displaystyle\sum\limits_{j=1}^mc_{ij}^{U}e^{y_{j}^{\vee}}|v_{j}(t)|}\bigg\}\nonumber\\
&\leq&
\bigg\{-\sum\limits_{l=1}^na_{li}^{L}e^{x_{i}^{\wedge}}|u_{i}(t)|+\bigg(2\bar{\mu}\sum\limits_{l=1}^na_{li}^{U}e^{x_{i}^{\vee}}+1\bigg)
\sum\limits_{l=1}^na_{li}^{U}e^{x_{i}^{\vee}}\int_{t-\tau_{li}(t)}^{t}|u_{i}^{\Delta}(s)|\Delta s\nonumber\\
&&+{\displaystyle\bigg(2\bar{\mu}\sum\limits_{l=1}^na_{li}^{U}e^{x_{i}^{\vee}}+1\bigg)
\sum\limits_{j=1}^mc_{ij}^{U}e^{y_{j}^{\vee}}\int_{t-\delta_{ij}(t)}^{t}|v_{j}^{\Delta}(s)|\Delta s}+2\bar{\mu}\bigg(\sum\limits_{l=1}^na_{li}^{U}e^{x_{i}^{\vee}}\bigg)^2|u_{i}(t)| \nonumber\\
&&+\bigg(2\bar{\mu}\sum\limits_{l=1}^na_{li}^{U}e^{x_{i}^{\vee}}+1\bigg)\sum\limits_{j=1}^mc_{ij}^{U}e^{y_{j}^{\vee}}|v_{j}(t)|\bigg\}\nonumber\\
&\leq&
\bigg\{-\sum\limits_{l=1}^na_{li}^{L}e^{x_{i}^{\wedge}}|u_{i}(t)|+\bigg(2\bar{\mu}\sum\limits_{l=1}^na_{li}^{U}e^{x_{i}^{\vee}}+1\bigg)
\bigg(\sum\limits_{l=1}^na_{li}^{U}e^{x_{i}^{\vee}}\bigg)^{2}\int_{t-\tau_{li}(t)}^{t}|u_{i}(t-\tau_{li}(t))|\Delta s\nonumber\\
&&+\bigg(2\bar{\mu}\sum\limits_{l=1}^na_{li}^{U}e^{x_{i}^{\vee}}+1\bigg)\sum\limits_{l=1}^na_{li}^{U}e^{x_{i}^{\vee}}
\sum\limits_{j=1}^mc_{ij}^{U}e^{y_{j}^{\vee}}\int_{t-\tau_{li}(t)}^{t}|v_{j}(t-\delta_{ij}(t))|\Delta s+\bigg(2\bar{\mu}\sum\limits_{l=1}^na_{li}^{U}e^{x_{i}^{\vee}}+1\bigg)\nonumber\\
&&\times\sum\limits_{j=1}^mc_{ij}^{U}e^{y_{j}^{\vee}}\sum\limits_{i=1}^nd_{ji}^Ue^{x_{i}^{\vee}}\int_{t-\delta_{ij}(t)}^{t}|u_{i}(t-\xi_{ji}(t))|\Delta s+\bigg(2\bar{\mu}\sum\limits_{l=1}^na_{li}^{U}e^{x_{i}^{\vee}}+1\bigg)
\sum\limits_{j=1}^mc_{ij}^{U}e^{y_{j}^{\vee}}\sum\limits_{h=1}^me_{hj}^{U}e^{y_{j}^{\vee}}\nonumber\\
&&\times\int_{t-\delta_{ij}(t)}^{t}|v_{j}(s-\eta_{hj}(s))|\Delta s+2\bar{\mu}\bigg(\sum\limits_{l=1}^na_{li}^{U}e^{x_{i}^{\vee}}\bigg)^2|u_{i}(t)| \nonumber\\
&&+\bigg(2\bar{\mu}\sum\limits_{l=1}^na_{li}^{U}e^{x_{i}^{\vee}}+1\bigg)\sum\limits_{j=1}^mc_{ij}^{U}e^{y_{j}^{\vee}}|v_{j}(t)|\bigg\}\nonumber\\
&\leq&
\bigg\{-\sum\limits_{l=1}^na_{li}^{L}e^{x_{i}^{\wedge}}|u_{i}(t)|+2\bar{\mu}\bigg(\sum\limits_{l=1}^na_{li}^{U}e^{x_{i}^{\vee}}\bigg)^2|u_{i}(t)| +\bigg(2\bar{\mu}\sum\limits_{l=1}^na_{li}^{U}e^{x_{i}^{\vee}}+1\bigg)\sum\limits_{j=1}^mc_{ij}^{U}e^{y_{j}^{\vee}}|v_{j}(t)|\nonumber\\
&&+\frac{1}{1-\tau^{\Delta}}\bigg(2\bar{\mu}\sum\limits_{l=1}^na_{li}^{U}e^{x_{i}^{\vee}}+1\bigg)
\bigg(\sum\limits_{l=1}^na_{li}^{U}e^{x_{i}^{\vee}}\bigg)^{2}\int_{-2\tau^+}^{-\tau^-}|u_{i}(t+s)|\Delta s\nonumber\\
&&+\frac{1}{1-\delta^{\Delta}}\bigg(2\bar{\mu}\sum\limits_{l=1}^na_{li}^{U}e^{x_{i}^{\vee}}+1\bigg)\sum\limits_{l=1}^na_{li}^{U}e^{x_{i}^{\vee}}
\sum\limits_{j=1}^mc_{ij}^{U}e^{y_{j}^{\vee}}\int_{-\tau^+-\delta^+}^{-\delta^-}|v_{j}(t+s)|\Delta s\nonumber\\
&&+\frac{1}{1-\xi^{\Delta}}\bigg(2\bar{\mu}\sum\limits_{l=1}^na_{li}^{U}e^{x_{i}^{\vee}}+1\bigg)\sum\limits_{j=1}^mc_{ij}^{U}e^{y_{j}^{\vee}}
\sum\limits_{i=1}^nd_{ji}^Ue^{x_{i}^{\vee}}\int_{-\delta^+-\xi^+}^{-\xi^-}|u_{i}(t+s)|\Delta s\nonumber\\
&&+\frac{1}{1-\eta^{\Delta}}\bigg(2\bar{\mu}\sum\limits_{l=1}^na_{li}^{U}e^{x_{i}^{\vee}}+1\bigg)
\sum\limits_{j=1}^mc_{ij}^{U}e^{y_{j}^{\vee}}\sum\limits_{h=1}^me_{hj}^{U}e^{y_{j}^{\vee}}\int_{-\delta^+-\eta^+}^{-\eta^-}|v_{j}(t+s)|\Delta s\bigg\},
\end{eqnarray}}
{\setlength\arraycolsep{2pt}
\begin{eqnarray}
D^{+}V_{i2}^{\Delta}(t)
&=&
\frac{1}{1-\tau^{\Delta}}\bigg(2\bar{\mu}\sum\limits_{l=1}^na_{li}^{U}e^{x_{i}^{\vee}}+1\bigg)
\bigg(\sum\limits_{l=1}^na_{li}^{U}e^{x_{i}^{\vee}}\bigg)^{2}
\int_{-2\tau^{+}}^{-\tau^{-}}[|u_{i}(t)|-|u_{i}(t+s)|]\Delta s\nonumber\\
&=&
\frac{1}{1-\tau^{\Delta}}\bigg(2\bar{\mu}\sum\limits_{l=1}^na_{li}^{U}e^{x_{i}^{\vee}}+1\bigg)
\bigg(\sum\limits_{l=1}^na_{li}^{U}e^{x_{i}^{\vee}}\bigg)^{2}(2\tau^{+}-\tau^{-})|u_{i}(t)|\nonumber\\
&&-\frac{1}{1-\tau^{\Delta}}\bigg(2\bar{\mu}\sum\limits_{l=1}^na_{li}^{U}e^{x_{i}^{\vee}}+1\bigg)
\bigg(\sum\limits_{l=1}^na_{li}^{U}e^{x_{i}^{\vee}}\bigg)^{2}
\int_{-2\tau^{+}}^{-\tau^{-}}|u_{i}(t+s)|\Delta s,\label{e46}\\
D^{+}V_{i3}^{\Delta}(t)
&=&
\frac{1}{1-\delta^{\Delta}}\bigg(2\bar{\mu}\sum\limits_{l=1}^na_{li}^{U}e^{x_{i}^{\vee}}+1\bigg)\sum\limits_{l=1}^na_{li}^{U}e^{x_{i}^{\vee}}
\sum\limits_{j=1}^mc_{ij}^{U}e^{y_{j}^{\vee}}
\int_{-\tau^+-\delta^+}^{-\delta^-}[|v_{j}(t)|-|v_{j}(t+s)|]\Delta s\nonumber\\
&=&
\frac{1}{1-\delta^{\Delta}}\bigg(2\bar{\mu}\sum\limits_{l=1}^na_{li}^{U}e^{x_{i}^{\vee}}+1\bigg)\sum\limits_{l=1}^na_{li}^{U}e^{x_{i}^{\vee}}
\sum\limits_{j=1}^mc_{ij}^{U}e^{y_{j}^{\vee}}(\tau^{+}+\delta^{+}-\delta^{-})|v_{j}(t)|\nonumber\\
&&-\frac{1}{1-\delta^{\Delta}}\bigg(2\bar{\mu}\sum\limits_{l=1}^na_{li}^{U}e^{x_{i}^{\vee}}+1\bigg)\sum\limits_{l=1}^na_{li}^{U}e^{x_{i}^{\vee}}
\sum\limits_{j=1}^mc_{ij}^{U}e^{y_{j}^{\vee}}
\int_{-\tau^+-\delta^+}^{-\delta^-}|v_{j}(t+s)|\Delta s,\label{e47}\\
D^{+}V_{i4}^{\Delta}(t)
&=&
\frac{1}{1-\xi^{\Delta}}\bigg(2\bar{\mu}\sum\limits_{l=1}^na_{li}^{U}e^{x_{i}^{\vee}}+1\bigg)\sum\limits_{j=1}^mc_{ij}^{U}e^{y_{j}^{\vee}}
\sum\limits_{i=1}^nd_{ji}^Ue^{x_{i}^{\vee}}\int_{-\delta^+-\xi^+}^{-\xi^-}[|u_{i}(t)|-|u_{i}(t+s)|]\Delta s\nonumber\\
&=&
\frac{1}{1-\xi^{\Delta}}\bigg(2\bar{\mu}\sum\limits_{l=1}^na_{li}^{U}e^{x_{i}^{\vee}}+1\bigg)\sum\limits_{j=1}^mc_{ij}^{U}e^{y_{j}^{\vee}}
\sum\limits_{i=1}^nd_{ji}^Ue^{x_{i}^{\vee}}(\xi^{+}+\delta^{+}-\xi^{-})|u_{i}(t)|\nonumber\\
&&-\frac{1}{1-\xi^{\Delta}}\bigg(2\bar{\mu}\sum\limits_{l=1}^na_{li}^{U}e^{x_{i}^{\vee}}+1\bigg)\sum\limits_{j=1}^mc_{ij}^{U}e^{y_{j}^{\vee}}
\sum\limits_{i=1}^nd_{ji}^Ue^{x_{i}^{\vee}}\int_{-\delta^+-\xi^+}^{-\xi^-}|u_{i}(t+s)|\Delta s,\label{e48}\\
D^{+}V_{i5}^{\Delta}(t)
&=&
\frac{1}{1-\eta^{\Delta}}\bigg(2\bar{\mu}\sum\limits_{l=1}^na_{li}^{U}e^{x_{i}^{\vee}}+1\bigg)
\sum\limits_{j=1}^mc_{ij}^{U}e^{y_{j}^{\vee}}\sum\limits_{h=1}^me_{hj}^{U}e^{y_{j}^{\vee}}
\int_{-\delta^+-\eta^+}^{-\eta^-}[|v_{j}(t)|-|v_{j}(t+s)|]\Delta s\nonumber\\
&=&
\frac{1}{1-\eta^{\Delta}}\bigg(2\bar{\mu}\sum\limits_{l=1}^na_{li}^{U}e^{x_{i}^{\vee}}+1\bigg)
\sum\limits_{j=1}^mc_{ij}^{U}e^{y_{j}^{\vee}}\sum\limits_{h=1}^me_{hj}^{U}e^{y_{j}^{\vee}}(\eta^{+}+\delta^{+}-\eta^{-})|v_{j}(t)|\nonumber\\
&&-\frac{1}{1-\eta^{\Delta}}\bigg(2\bar{\mu}\sum\limits_{l=1}^na_{li}^{U}e^{x_{i}^{\vee}}+1\bigg)
\sum\limits_{j=1}^mc_{ij}^{U}e^{y_{j}^{\vee}}\sum\limits_{h=1}^me_{hj}^{U}e^{y_{j}^{\vee}}\int_{-\eta^+-\delta^+}^{-\eta^-}|v_{j}(t+s)|\Delta s,\label{e49}
\end{eqnarray}}
{\setlength\arraycolsep{2pt}
\begin{eqnarray}\label{e410}
&&D^{+}W_{j1}^{\Delta}(t)\nonumber\\
&\leq&
\mathrm{sign}(v_{j}^{\sigma}(t))v_{j}^{\Delta}(t)\nonumber\\
&=&
\mathrm{sign}(v_{j}^{\sigma}(t))\bigg[\sum\limits_{l=1}^nd_{jl}(t)e^{\zeta_{l}^\ast(t)}u_{l}(t-\xi_{jl}(t))
-\sum\limits_{h=1}^me_{jh}(t)e^{\omega_{h}^\ast(t)}v_{h}(t-\eta_{jh}(t))\bigg]\nonumber\\
&=&
-\mathrm{sign}(v_{j}^{\sigma}(t))\sum\limits_{h=1}^me_{jh}(t)e^{\omega_{h}^\ast(t)}v_{h}(t-\eta_{jh}(t))
+\mathrm{sign}(v_{j}^{\sigma}(t))\sum\limits_{l=1}^nd_{jl}(t)e^{\zeta_{l}^\ast(t)}u_{l}(t-\xi_{jl}(t))\nonumber\\
&=&
\bigg\{-\mathrm{sign}(v_{j}^{\sigma}(t))\sum\limits_{h=1}^me_{hj}(t)e^{\omega_{j}^\ast(t)}[v_{j}(t)+v_{j}(t-\eta_{hj}(t))-v_{j}(t)]\nonumber\\
&&+\sum\limits_{i=1}^nd_{ji}(t)e^{\zeta_{i}^\ast(t)}v_{j}^{\sigma}(t)[u_{i}(t)+u_{i}(t-\xi_{ji}(t))-u_{i}(t)]\bigg\}\nonumber\\
&\leq&
\bigg\{-\sum\limits_{h=1}^me_{hj}^{L}e^{y_{j}^{\wedge}}|v_{j}(t)|+\sum\limits_{h=1}^me_{hj}^{U}e^{y_{j}^{\vee}}\int_{t-\eta_{hj}(t)}^{t}|v_{j}^{\Delta}(s)|\Delta s+{\displaystyle\sum\limits_{i=1}^nd_{ji}^{U}e^{x_{i}^{\vee}}\int_{t-\xi_{ji}(t)}^{t}|u_{i}^{\Delta}(s)|\Delta s}+2\bar{\mu} \nonumber\\
&&\times\sum\limits_{h=1}^me_{hj}^{U}e^{y_{j}^{\vee}}\bigg|\sum\limits_{l=1}^nd_{jl}(t)e^{\zeta_{l}^\ast(t)}u_{l}(t-\xi_{jl}(t))
-\sum\limits_{h=1}^me_{jh}(t)e^{\omega_{h}^\ast(t)}v_{h}(t-\eta_{jh}(t))\bigg|
+{\displaystyle\sum\limits_{i=1}^nd_{ji}^{U}e^{x_{i}^{\vee}}|u_{i}(t)|}\bigg\}\nonumber\\
&\leq&
\bigg\{-\sum\limits_{h=1}^me_{hj}^{L}e^{y_{j}^{\wedge}}|v_{j}(t)|+\bigg(2\bar{\mu}\sum\limits_{h=1}^me_{hj}^{U}e^{y_{j}^{\vee}}+1\bigg)
\sum\limits_{h=1}^me_{hj}^{U}e^{y_{j}^{\vee}}\int_{t-\eta_{hj}(t)}^{t}|v_{j}^{\Delta}(s)|\Delta s\nonumber\\
&&+{\displaystyle\bigg(2\bar{\mu}\sum\limits_{h=1}^me_{hj}^{U}e^{y_{j}^{\vee}}+1\bigg)
\sum\limits_{i=1}^nd_{ji}^{U}e^{x_{i}^{\vee}}\int_{t-\xi_{ji}(t)}^{t}|u_{i}^{\Delta}(s)|\Delta s}+2\bar{\mu}\bigg(\sum\limits_{h=1}^me_{hj}^{U}e^{y_{j}^{\vee}}\bigg)^2|v_{j}(t)| \nonumber\\
&&+\bigg(2\bar{\mu}\sum\limits_{h=1}^me_{hj}^{U}e^{y_{j}^{\vee}}+1\bigg)
\sum\limits_{i=1}^nd_{ji}^{U}e^{x_{i}^{\vee}}|u_{i}(t)|\bigg\}\nonumber\\
&\leq&
\bigg\{-\sum\limits_{h=1}^me_{hj}^{L}e^{y_{j}^{\wedge}}|v_{j}(t)|+\bigg(2\bar{\mu}\sum\limits_{h=1}^me_{hj}^{U}e^{y_{j}^{\vee}}+1\bigg)
\bigg(\sum\limits_{h=1}^me_{hj}^{U}e^{y_{j}^{\vee}}\bigg)^{2}\int_{t-\eta_{hj}(t)}^{t}|v_{j}(t-\eta_{hj}(t))|\Delta s\nonumber\\
&&+\bigg(2\bar{\mu}\sum\limits_{h=1}^me_{hj}^{U}e^{y_{j}^{\vee}}+1\bigg)
\sum\limits_{h=1}^me_{hj}^{U}e^{y_{j}^{\vee}}\sum\limits_{i=1}^nd_{ji}^{U}e^{x_{i}^{\vee}}\int_{t-\eta_{hj}(t)}^{t}|u_{i}(t-\xi_{ji}(t))|\Delta s+\bigg(2\bar{\mu}\sum\limits_{h=1}^me_{hj}^{U}e^{y_{j}^{\vee}}+1\bigg)\nonumber\\
&&\times\sum\limits_{i=1}^nd_{ji}^Ue^{x_{i}^{\vee}}\sum\limits_{l=1}^na_{li}^{U}e^{x_{i}^{\vee}}\int_{t-\xi_{ji}(t)}^{t}|u_{i}(t-\tau_{li}(t))|\Delta s+\bigg(2\bar{\mu}\sum\limits_{h=1}^me_{hj}^{U}e^{y_{j}^{\vee}}+1\bigg)
\sum\limits_{i=1}^nd_{ji}^Ue^{x_{i}^{\vee}}\sum\limits_{j=1}^mc_{ij}^{U}e^{y_{j}^{\vee}}\nonumber\\
&&\times\int_{t-\xi_{ji}(t)}^{t}|v_{j}(s-\delta_{ij}(s))|\Delta s+2\bar{\mu}\bigg(\sum\limits_{h=1}^me_{hj}^{U}e^{y_{j}^{\vee}}\bigg)^2|v_{j}(t)|\nonumber\\
&& +\bigg(2\bar{\mu}\sum\limits_{h=1}^me_{hj}^{U}e^{y_{j}^{\vee}}+1\bigg)
\sum\limits_{i=1}^nd_{ji}^{U}e^{x_{i}^{\vee}}|u_{i}(t)|\bigg\}\nonumber\\
&\leq&
\bigg\{-\sum\limits_{h=1}^me_{hj}^{L}e^{y_{j}^{\wedge}}|v_{j}(t)|+2\bar{\mu}\bigg(\sum\limits_{h=1}^me_{hj}^{U}e^{y_{j}^{\vee}}\bigg)^2|v_{j}(t)| +\bigg(2\bar{\mu}\sum\limits_{h=1}^me_{hj}^{U}e^{y_{j}^{\vee}}+1\bigg)
\sum\limits_{i=1}^nd_{ji}^{U}e^{x_{i}^{\vee}}|u_{i}(t)|\nonumber\\
&&+\frac{1}{1-\eta^{\Delta}}\bigg(2\bar{\mu}\sum\limits_{h=1}^me_{hj}^{U}e^{y_{j}^{\vee}}+1\bigg)
\bigg(\sum\limits_{h=1}^me_{hj}^{U}e^{y_{j}^{\vee}}\bigg)^{2}\int_{-2\eta^+}^{-\eta^-}|v_{j}(t+s)|\Delta s\nonumber\\
&&+\frac{1}{1-\xi^{\Delta}}\bigg(2\bar{\mu}\sum\limits_{h=1}^me_{hj}^{U}e^{y_{j}^{\vee}}+1\bigg)
\sum\limits_{h=1}^me_{hj}^{U}e^{y_{j}^{\vee}}\sum\limits_{i=1}^nd_{ji}^{U}e^{x_{i}^{\vee}}\int_{-\eta^+-\xi^+}^{-\xi^-}|u_{i}(t+s)|\Delta s\nonumber\\
&&+\frac{1}{1-\tau^{\Delta}}\bigg(2\bar{\mu}\sum\limits_{h=1}^me_{hj}^{U}e^{y_{j}^{\vee}}+1\bigg)
\sum\limits_{i=1}^nd_{ji}^Ue^{x_{i}^{\vee}}\sum\limits_{l=1}^na_{li}^{U}e^{x_{i}^{\vee}}\int_{-\tau^+-\xi^+}^{-\tau^-}|u_{i}(t+s)|\Delta s\nonumber\\
&&+\frac{1}{1-\delta^{\Delta}}\bigg(2\bar{\mu}\sum\limits_{h=1}^me_{hj}^{U}e^{y_{j}^{\vee}}+1\bigg)
\sum\limits_{i=1}^nd_{ji}^Ue^{x_{i}^{\vee}}\sum\limits_{j=1}^mc_{ij}^{U}e^{y_{j}^{\vee}}\int_{-\delta^+-\eta^+}^{-\eta^-}|v_{j}(t+s)|\Delta s\bigg\},
\end{eqnarray}}
{\setlength\arraycolsep{2pt}
\begin{eqnarray}
D^{+}W_{j2}^{\Delta}(t)
&=&
\frac{1}{1-\eta^{\Delta}}\bigg(2\bar{\mu}\sum\limits_{h=1}^me_{hj}^{U}e^{y_{j}^{\vee}}+1\bigg)
\bigg(\sum\limits_{h=1}^me_{hj}^{U}e^{y_{j}^{\vee}}\bigg)^{2}\int_{-2\eta^+}^{-\eta^-}[|v_{j}(t)|-|v_{j}(t+s)|]\Delta s\nonumber\\
&=&
\frac{1}{1-\eta^{\Delta}}\bigg(2\bar{\mu}\sum\limits_{h=1}^me_{hj}^{U}e^{y_{j}^{\vee}}+1\bigg)
\bigg(\sum\limits_{h=1}^me_{hj}^{U}e^{y_{j}^{\vee}}\bigg)^{2}(2\eta^{+}-\eta^{-})|v_{j}(t)|\nonumber\\
&&-\frac{1}{1-\eta^{\Delta}}\bigg(2\bar{\mu}\sum\limits_{h=1}^me_{hj}^{U}e^{y_{j}^{\vee}}+1\bigg)
\bigg(\sum\limits_{h=1}^me_{hj}^{U}e^{y_{j}^{\vee}}\bigg)^{2}\int_{-2\eta^+}^{-\eta^-}|v_{j}(t+s)|\Delta s,\label{e411}\\
D^{+}W_{j3}^{\Delta}(t)
&=&
\frac{1}{1-\xi^{\Delta}}\bigg(2\bar{\mu}\sum\limits_{h=1}^me_{hj}^{U}e^{y_{j}^{\vee}}+1\bigg)
\sum\limits_{h=1}^me_{hj}^{U}e^{y_{j}^{\vee}}\sum\limits_{i=1}^nd_{ji}^{U}e^{x_{i}^{\vee}}\int_{-\eta^+-\xi^+}^{-\xi^-}[|u_{i}(t)|-|u_{i}(t+s)|]\Delta s\nonumber\\
&=&
\frac{1}{1-\xi^{\Delta}}\bigg(2\bar{\mu}\sum\limits_{h=1}^me_{hj}^{U}e^{y_{j}^{\vee}}+1\bigg)
\sum\limits_{h=1}^me_{hj}^{U}e^{y_{j}^{\vee}}\sum\limits_{i=1}^nd_{ji}^{U}e^{x_{i}^{\vee}}(\eta^++\xi^+-\xi^-)|u_{i}(t)|\nonumber\\
&&-\frac{1}{1-\xi^{\Delta}}\bigg(2\bar{\mu}\sum\limits_{h=1}^me_{hj}^{U}e^{y_{j}^{\vee}}+1\bigg)
\sum\limits_{h=1}^me_{hj}^{U}e^{y_{j}^{\vee}}\sum\limits_{i=1}^nd_{ji}^{U}e^{x_{i}^{\vee}}\int_{-\eta^+-\xi^+}^{-\xi^-}|u_{i}(t+s)|\Delta s,\label{e412}\\
D^{+}W_{j4}^{\Delta}(t)
&=&
\frac{1}{1-\tau^{\Delta}}\bigg(2\bar{\mu}\sum\limits_{h=1}^me_{hj}^{U}e^{y_{j}^{\vee}}+1\bigg)
\sum\limits_{i=1}^nd_{ji}^Ue^{x_{i}^{\vee}}\sum\limits_{l=1}^na_{li}^{U}e^{x_{i}^{\vee}}\int_{-\tau^+-\xi^+}^{-\tau^-}[|u_{i}(t)|-|u_{i}(t+s)|]\Delta s\nonumber\\
&=&
\frac{1}{1-\tau^{\Delta}}\bigg(2\bar{\mu}\sum\limits_{h=1}^me_{hj}^{U}e^{y_{j}^{\vee}}+1\bigg)
\sum\limits_{i=1}^nd_{ji}^Ue^{x_{i}^{\vee}}\sum\limits_{l=1}^na_{li}^{U}e^{x_{i}^{\vee}}(\tau^++\xi^+-\tau^-)|u_{i}(t)|\nonumber\\
&&-\frac{1}{1-\tau^{\Delta}}\bigg(2\bar{\mu}\sum\limits_{h=1}^me_{hj}^{U}e^{y_{j}^{\vee}}+1\bigg)
\sum\limits_{i=1}^nd_{ji}^Ue^{x_{i}^{\vee}}\sum\limits_{l=1}^na_{li}^{U}e^{x_{i}^{\vee}}\int_{-\tau^+-\xi^+}^{-\tau^-}|u_{i}(t+s)|\Delta s,\label{e413}\\
D^{+}W_{j5}^{\Delta}(t)
&=&
\frac{1}{1-\delta^{\Delta}}\bigg(2\bar{\mu}\sum\limits_{h=1}^me_{hj}^{U}e^{y_{j}^{\vee}}+1\bigg)
\sum\limits_{i=1}^nd_{ji}^Ue^{x_{i}^{\vee}}\sum\limits_{j=1}^mc_{ij}^{U}e^{y_{j}^{\vee}}\int_{-\delta^+-\eta^+}^{-\eta^-}[|v_{j}(t)|-|v_{j}(t+s)|]\Delta s\nonumber\\
&=&
\frac{1}{1-\delta^{\Delta}}\bigg(2\bar{\mu}\sum\limits_{h=1}^me_{hj}^{U}e^{y_{j}^{\vee}}+1\bigg)
\sum\limits_{i=1}^nd_{ji}^Ue^{x_{i}^{\vee}}\sum\limits_{j=1}^mc_{ij}^{U}e^{y_{j}^{\vee}}(\delta^++\eta^+-\eta^-)|v_{j}(t)|\nonumber\\
&&-\frac{1}{1-\delta^{\Delta}}\bigg(2\bar{\mu}\sum\limits_{h=1}^me_{hj}^{U}e^{y_{j}^{\vee}}+1\bigg)
\sum\limits_{i=1}^nd_{ji}^Ue^{x_{i}^{\vee}}\sum\limits_{j=1}^mc_{ij}^{U}e^{y_{j}^{\vee}}\int_{-\delta^+-\eta^+}^{-\eta^-}|v_{j}(t+s)|\Delta s.\label{e414}
\end{eqnarray}}

In view of $\eqref{e45}$--$\eqref{e49}$, we can obtain for  $i=1,2,\ldots,n$,
{\setlength\arraycolsep{2pt}\begin{eqnarray*}
&&D^{+}V_{i}^{\Delta}(t)\nonumber\\
&=&
D^{+}V_{i1}^{\Delta}(t)+D^{+}V_{i2}^{\Delta}(t)+D^{+}V_{i3}^{\Delta}(t)+D^{+}V_{i4}^{\Delta}(t)+D^{+}V_{i5}^{\Delta}(t)\nonumber\\
&\leq&
-\sum\limits_{l=1}^na_{li}^{L}e^{x_{i}^{\wedge}}|u_{i}(t)|+2\bar{\mu}\bigg(\sum\limits_{l=1}^na_{li}^{U}e^{x_{i}^{\vee}}\bigg)^2|u_{i}(t)| +\bigg(2\bar{\mu}\sum\limits_{l=1}^na_{li}^{U}e^{x_{i}^{\vee}}+1\bigg)\sum\limits_{j=1}^mc_{ij}^{U}e^{y_{j}^{\vee}}|v_{j}(t)|\nonumber\\
&&+\frac{1}{1-\tau^{\Delta}}\bigg(2\bar{\mu}\sum\limits_{l=1}^na_{li}^{U}e^{x_{i}^{\vee}}+1\bigg)
\bigg(\sum\limits_{l=1}^na_{li}^{U}e^{x_{i}^{\vee}}\bigg)^{2}(2\tau^{+}-\tau^{-})|u_{i}(t)|\nonumber\\
&&+\frac{1}{1-\delta^{\Delta}}\bigg(2\bar{\mu}\sum\limits_{l=1}^na_{li}^{U}e^{x_{i}^{\vee}}+1\bigg)\sum\limits_{l=1}^na_{li}^{U}e^{x_{i}^{\vee}}
\sum\limits_{j=1}^mc_{ij}^{U}e^{y_{j}^{\vee}}(\tau^{+}+\delta^{+}-\delta^{-})|v_{j}(t)|\nonumber\\
&&+\frac{1}{1-\xi^{\Delta}}\bigg(2\bar{\mu}\sum\limits_{l=1}^na_{li}^{U}e^{x_{i}^{\vee}}+1\bigg)\sum\limits_{j=1}^mc_{ij}^{U}e^{y_{j}^{\vee}}
\sum\limits_{i=1}^nd_{ji}^Ue^{x_{i}^{\vee}}(\xi^{+}+\delta^{+}-\xi^{-})|u_{i}(t)|\nonumber\\
&&+\frac{1}{1-\eta^{\Delta}}\bigg(2\bar{\mu}\sum\limits_{l=1}^na_{li}^{U}e^{x_{i}^{\vee}}+1\bigg)
\sum\limits_{j=1}^mc_{ij}^{U}e^{y_{j}^{\vee}}\sum\limits_{h=1}^me_{hj}^{U}e^{y_{j}^{\vee}}(\eta^{+}+\delta^{+}-\eta^{-})|v_{j}(t)|\nonumber\\
&=&
-\bigg\{\sum\limits_{l=1}^na_{li}^{L}e^{x_{i}^{\wedge}}-2\bar{\mu}\bigg(\sum\limits_{l=1}^na_{li}^{U}e^{x_{i}^{\vee}}\bigg)^2-\frac{1}{1-\tau^{\Delta}}
\bigg(2\bar{\mu}\sum\limits_{l=1}^na_{li}^{U}e^{x_{i}^{\vee}}+1\bigg)
\bigg(\sum\limits_{l=1}^na_{li}^{U}e^{x_{i}^{\vee}}\bigg)^{2}(2\tau^{+}-\tau^{-})
\nonumber\\&&-\frac{1}{1-\xi^{\Delta}}\bigg(2\bar{\mu}\sum\limits_{l=1}^na_{li}^{U}e^{x_{i}^{\vee}}+1\bigg)\sum\limits_{j=1}^mc_{ij}^{U}e^{y_{j}^{\vee}}
\sum\limits_{i=1}^nd_{ji}^Ue^{x_{i}^{\vee}}(\xi^{+}+\delta^{+}-\xi^{-})\bigg\}|u_{i}(t)|\nonumber\\
&&+\bigg\{\bigg(2\bar{\mu}\sum\limits_{l=1}^na_{li}^{U}e^{x_{i}^{\vee}}+1\bigg)\sum\limits_{j=1}^mc_{ij}^{U}e^{y_{j}^{\vee}}
+\frac{1}{1-\delta^{\Delta}}\bigg(2\bar{\mu}\sum\limits_{l=1}^na_{li}^{U}e^{x_{i}^{\vee}}+1\bigg)\sum\limits_{l=1}^na_{li}^{U}e^{x_{i}^{\vee}}
\sum\limits_{j=1}^mc_{ij}^{U}e^{y_{j}^{\vee}}(\tau^{+}\nonumber\\
&&+\delta^{+}-\delta^{-})+\frac{1}{1-\eta^{\Delta}}\bigg(2\bar{\mu}\sum\limits_{l=1}^na_{li}^{U}e^{x_{i}^{\vee}}+1\bigg)
\sum\limits_{j=1}^mc_{ij}^{U}e^{y_{j}^{\vee}}\sum\limits_{h=1}^me_{hj}^{U}e^{y_{j}^{\vee}}(\eta^{+}+\delta^{+}-\eta^{-})\bigg\}|v_{j}(t)|.
\end{eqnarray*}}

In view of $\eqref{e410}$--$\eqref{e414}$, we can obtain $j=1,2,\ldots,m$,
{\setlength\arraycolsep{2pt}\begin{eqnarray*}
&&D^{+}W_{j}^{\Delta}(t)\nonumber\\
&=&
D^{+}W_{j1}^{\Delta}(t)+D^{+}W_{j2}^{\Delta}(t)+D^{+}W_{j3}^{\Delta}(t)+D^{+}W_{j4}^{\Delta}(t)+D^{+}W_{j5}^{\Delta}(t)\nonumber\\
&\leq&
-\sum\limits_{h=1}^me_{hj}^{L}e^{y_{j}^{\wedge}}|v_{j}(t)|+2\bar{\mu}\bigg(\sum\limits_{h=1}^me_{hj}^{U}e^{y_{j}^{\vee}}\bigg)^2|v_{j}(t)| +\bigg(2\bar{\mu}\sum\limits_{h=1}^me_{hj}^{U}e^{y_{j}^{\vee}}+1\bigg)
\sum\limits_{i=1}^nd_{ji}^{U}e^{x_{i}^{\vee}}|u_{i}(t)|\nonumber\\
&&+\frac{1}{1-\eta^{\Delta}}\bigg(2\bar{\mu}\sum\limits_{h=1}^me_{hj}^{U}e^{y_{j}^{\vee}}+1\bigg)
\bigg(\sum\limits_{h=1}^me_{hj}^{U}e^{y_{j}^{\vee}}\bigg)^{2}(2\eta^{+}-\eta^{-})|v_{j}(t)|\nonumber\\
&&+\frac{1}{1-\xi^{\Delta}}\bigg(2\bar{\mu}\sum\limits_{h=1}^me_{hj}^{U}e^{y_{j}^{\vee}}+1\bigg)
\sum\limits_{h=1}^me_{hj}^{U}e^{y_{j}^{\vee}}\sum\limits_{i=1}^nd_{ji}^{U}e^{x_{i}^{\vee}}(\eta^++\xi^+-\xi^-)|u_{i}(t)|\nonumber\\
&&+\frac{1}{1-\tau^{\Delta}}\bigg(2\bar{\mu}\sum\limits_{h=1}^me_{hj}^{U}e^{y_{j}^{\vee}}+1\bigg)
\sum\limits_{i=1}^nd_{ji}^Ue^{x_{i}^{\vee}}\sum\limits_{l=1}^na_{li}^{U}e^{x_{i}^{\vee}}(\tau^++\xi^+-\tau^-)|u_{i}(t)|\nonumber\\
&&+\frac{1}{1-\delta^{\Delta}}\bigg(2\bar{\mu}\sum\limits_{h=1}^me_{hj}^{U}e^{y_{j}^{\vee}}+1\bigg)
\sum\limits_{i=1}^nd_{ji}^Ue^{x_{i}^{\vee}}\sum\limits_{j=1}^mc_{ij}^{U}e^{y_{j}^{\vee}}(\delta^++\eta^+-\eta^-)|v_{j}(t)|\nonumber\\
&=&
-\bigg\{\sum\limits_{h=1}^me_{hj}^{L}e^{y_{j}^{\wedge}}-2\bar{\mu}\bigg(\sum\limits_{h=1}^me_{hj}^{U}e^{y_{j}^{\vee}}\bigg)^2-\frac{1}{1-\eta^{\Delta}}\bigg(2\bar{\mu}\sum\limits_{h=1}^me_{hj}^{U}e^{y_{j}^{\vee}}+1\bigg)
\bigg(\sum\limits_{h=1}^me_{hj}^{U}e^{y_{j}^{\vee}}\bigg)^{2}(2\eta^{+}-\eta^{-})\nonumber\\
&&-\frac{1}{1-\delta^{\Delta}}\bigg(2\bar{\mu}\sum\limits_{h=1}^me_{hj}^{U}e^{y_{j}^{\vee}}+1\bigg)
\sum\limits_{i=1}^nd_{ji}^Ue^{x_{i}^{\vee}}\sum\limits_{j=1}^mc_{ij}^{U}e^{y_{j}^{\vee}}(\delta^++\eta^+-\eta^-)\bigg\}|v_{j}(t)|\nonumber\\
&&+\bigg\{\bigg(2\bar{\mu}\sum\limits_{h=1}^me_{hj}^{U}e^{y_{j}^{\vee}}+1\bigg)\sum\limits_{i=1}^nd_{ji}^{U}e^{x_{i}^{\vee}}
+\frac{1}{1-\xi^{\Delta}}\bigg(2\bar{\mu}\sum\limits_{h=1}^me_{hj}^{U}e^{y_{j}^{\vee}}+1\bigg)
\sum\limits_{h=1}^me_{hj}^{U}e^{y_{j}^{\vee}}\sum\limits_{i=1}^nd_{ji}^{U}e^{x_{i}^{\vee}}(\eta^+\nonumber\\&&+\xi^+-\xi^-)+\frac{1}{1-\tau^{\Delta}}\bigg(2\bar{\mu}\sum\limits_{h=1}^me_{hj}^{U}e^{y_{j}^{\vee}}+1\bigg)
\sum\limits_{i=1}^nd_{ji}^Ue^{x_{i}^{\vee}}\sum\limits_{l=1}^na_{li}^{U}e^{x_{i}^{\vee}}(\tau^++\xi^+-\tau^-)\bigg\}|u_{i}(t)|.
\end{eqnarray*}}

Then for $t\neq t_{k}$, we have
{\setlength\arraycolsep{2pt}\begin{eqnarray*}
&&D^{+}V^{\Delta}(t)\leq D^{+}\tilde{V}^{\Delta}(t)\\
&\leq&
-\sum\limits_{i=1}^n\bigg\{\sum\limits_{l=1}^na_{li}^{L}e^{x_{i}^{\wedge}}-2\bar{\mu}\bigg(\sum\limits_{l=1}^na_{li}^{U}e^{x_{i}^{\vee}}\bigg)^2
-\frac{1}{1-\tau^{\Delta}}\bigg(2\bar{\mu}\sum\limits_{l=1}^na_{li}^{U}e^{x_{i}^{\vee}}+1\bigg)
\bigg(\sum\limits_{l=1}^na_{li}^{U}e^{x_{i}^{\vee}}\bigg)^{2}\nonumber\\
&&\times(2\tau^{+}-\tau^{-})-\frac{1}{1-\xi^{\Delta}}\bigg(2\bar{\mu}\sum\limits_{l=1}^na_{li}^{U}e^{x_{i}^{\vee}}+1\bigg)
\sum\limits_{j=1}^mc_{ij}^{U}e^{y_{j}^{\vee}}\sum\limits_{i=1}^nd_{ji}^Ue^{x_{i}^{\vee}}(\xi^{+}+\delta^{+}-\xi^{-})\bigg\}|u_{i}(t)|\nonumber\\
&&+\sum\limits_{i=1}^n\bigg\{\bigg(2\bar{\mu}\sum\limits_{l=1}^na_{li}^{U}e^{x_{i}^{\vee}}+1\bigg)\sum\limits_{j=1}^mc_{ij}^{U}e^{y_{j}^{\vee}}
+\frac{1}{1-\delta^{\Delta}}\bigg(2\bar{\mu}\sum\limits_{l=1}^na_{li}^{U}e^{x_{i}^{\vee}}+1\bigg)\sum\limits_{l=1}^na_{li}^{U}e^{x_{i}^{\vee}}
\sum\limits_{j=1}^mc_{ij}^{U}e^{y_{j}^{\vee}}\nonumber\\
&&\times(\tau^{+}+\delta^{+}-\delta^{-})+\frac{1}{1-\eta^{\Delta}}\bigg(2\bar{\mu}\sum\limits_{l=1}^na_{li}^{U}e^{x_{i}^{\vee}}+1\bigg)
\sum\limits_{j=1}^mc_{ij}^{U}e^{y_{j}^{\vee}}\sum\limits_{h=1}^me_{hj}^{U}e^{y_{j}^{\vee}}(\eta^{+}+\delta^{+}-\eta^{-})\bigg\}|v_{j}(t)|\nonumber\\
&&-\sum\limits_{j=1}^m\bigg\{\sum\limits_{h=1}^me_{hj}^{L}e^{y_{j}^{\wedge}}-2\bar{\mu}\bigg(\sum\limits_{h=1}^me_{hj}^{U}e^{y_{j}^{\vee}}\bigg)^2
-\frac{1}{1-\eta^{\Delta}}\bigg(2\bar{\mu}\sum\limits_{h=1}^me_{hj}^{U}e^{y_{j}^{\vee}}+1\bigg)
\bigg(\sum\limits_{h=1}^me_{hj}^{U}e^{y_{j}^{\vee}}\bigg)^{2}\nonumber\\
&&\times(2\eta^{+}-\eta^{-})-\frac{1}{1-\delta^{\Delta}}\bigg(2\bar{\mu}\sum\limits_{h=1}^me_{hj}^{U}e^{y_{j}^{\vee}}+1\bigg)
\sum\limits_{i=1}^nd_{ji}^Ue^{x_{i}^{\vee}}\sum\limits_{j=1}^mc_{ij}^{U}e^{y_{j}^{\vee}}(\delta^++\eta^+-\eta^-)\bigg\}|v_{j}(t)|\nonumber\\
&&+\sum\limits_{j=1}^m\bigg\{\bigg(2\bar{\mu}\sum\limits_{h=1}^me_{hj}^{U}e^{y_{j}^{\vee}}+1\bigg)\sum\limits_{i=1}^nd_{ji}^{U}e^{x_{i}^{\vee}}
+\frac{1}{1-\xi^{\Delta}}\bigg(2\bar{\mu}\sum\limits_{h=1}^me_{hj}^{U}e^{y_{j}^{\vee}}+1\bigg)
\sum\limits_{h=1}^me_{hj}^{U}e^{y_{j}^{\vee}}\sum\limits_{i=1}^nd_{ji}^{U}e^{x_{i}^{\vee}}\nonumber\\&&\times(\eta^++\xi^+-\xi^-)+\frac{1}{1-\tau^{\Delta}}\bigg(2\bar{\mu}\sum\limits_{h=1}^me_{hj}^{U}e^{y_{j}^{\vee}}+1\bigg)
\sum\limits_{i=1}^nd_{ji}^Ue^{x_{i}^{\vee}}\sum\limits_{l=1}^na_{li}^{U}e^{x_{i}^{\vee}}(\tau^++\xi^+-\tau^-)\bigg\}|u_{i}(t)|\nonumber\\
&=&
-\sum\limits_{i=1}^n\bigg\{\sum\limits_{l=1}^na_{li}^{L}e^{x_{i}^{\wedge}}-2\bar{\mu}\bigg(\sum\limits_{l=1}^na_{li}^{U}e^{x_{i}^{\vee}}\bigg)^2
-\frac{1}{1-\tau^{\Delta}}\bigg(2\bar{\mu}\sum\limits_{l=1}^na_{li}^{U}e^{x_{i}^{\vee}}+1\bigg)
\bigg(\sum\limits_{l=1}^na_{li}^{U}e^{x_{i}^{\vee}}\bigg)^{2}\nonumber\\
&&\times(2\tau^{+}-\tau^{-})-\frac{1}{1-\xi^{\Delta}}\bigg(2\bar{\mu}\sum\limits_{l=1}^na_{li}^{U}e^{x_{i}^{\vee}}+1\bigg)
\sum\limits_{j=1}^mc_{ij}^{U}e^{y_{j}^{\vee}}\sum\limits_{i=1}^nd_{ji}^Ue^{x_{i}^{\vee}}(\xi^{+}+\delta^{+}-\xi^{-})\nonumber\\
&&-\sum\limits_{j=1}^md_{ji}^{U}e^{x_{i}^{\vee}}\bigg(2\bar{\mu}\sum\limits_{h=1}^me_{hj}^{U}e^{y_{j}^{\vee}}+1\bigg)
-\frac{1}{1-\xi^{\Delta}}\sum\limits_{j=1}^md_{ji}^{U}e^{x_{i}^{\vee}}\bigg(2\bar{\mu}\sum\limits_{h=1}^me_{hj}^{U}e^{y_{j}^{\vee}}+1\bigg)
\sum\limits_{h=1}^me_{hj}^{U}e^{y_{j}^{\vee}}(\eta^+\nonumber\\&&
+\xi^+-\xi^-)-\frac{1}
{1-\tau^{\Delta}}\sum\limits_{j=1}^md_{ji}^{U}e^{x_{i}^{\vee}}\bigg(2\bar{\mu}\sum\limits_{h=1}^me_{hj}^{U}e^{y_{j}^{\vee}}+1\bigg)
\sum\limits_{l=1}^na_{li}^{U}e^{x_{i}^{\vee}}(\tau^++\xi^+-\tau^-)\bigg\}|u_{i}(t)|\nonumber\\
&&-\sum\limits_{j=1}^m\bigg\{\sum\limits_{h=1}^me_{hj}^{L}e^{y_{j}^{\wedge}}-2\bar{\mu}\bigg(\sum\limits_{h=1}^me_{hj}^{U}e^{y_{j}^{\vee}}\bigg)^2
-\frac{1}{1-\eta^{\Delta}}\bigg(2\bar{\mu}\sum\limits_{h=1}^me_{hj}^{U}e^{y_{j}^{\vee}}+1\bigg)
\bigg(\sum\limits_{h=1}^me_{hj}^{U}e^{y_{j}^{\vee}}\bigg)^{2}\nonumber\\
&&\times(2\eta^{+}-\eta^{-})-\frac{1}{1-\delta^{\Delta}}\bigg(2\bar{\mu}\sum\limits_{h=1}^me_{hj}^{U}e^{y_{j}^{\vee}}+1\bigg)
\sum\limits_{i=1}^nd_{ji}^Ue^{x_{i}^{\vee}}\sum\limits_{j=1}^mc_{ij}^{U}e^{y_{j}^{\vee}}(\delta^++\eta^+-\eta^-)\nonumber\\
&&-\sum\limits_{i=1}^nc_{ij}^{U}e^{y_{j}^{\vee}}\bigg(2\bar{\mu}\sum\limits_{l=1}^na_{li}^{U}e^{x_{i}^{\vee}}+1\bigg)
-\frac{1}{1-\delta^{\Delta}}\sum\limits_{i=1}^nc_{ij}^{U}e^{y_{j}^{\vee}}\bigg(2\bar{\mu}\sum\limits_{l=1}^na_{li}^{U}e^{x_{i}^{\vee}}+1\bigg)
\sum\limits_{l=1}^na_{li}^{U}e^{x_{i}^{\vee}}(\tau^{+}\nonumber\\
&&+\delta^{+}-\delta^{-})-\frac{1}{1-\eta^{\Delta}}\sum\limits_{i=1}^nc_{ij}^{U}e^{y_{j}^{\vee}}\bigg(2\bar{\mu}
\sum\limits_{l=1}^na_{li}^{U}e^{x_{i}^{\vee}}+1\bigg)\sum\limits_{h=1}^me_{hj}^{U}e^{y_{j}^{\vee}}(\eta^{+}+\delta^{+}-\eta^{-})\bigg\}|v_{j}(t)|\nonumber\\
&\leq&
-\sum\limits_{i=1}^n\gamma_i|u_{i}(t)|-\sum\limits_{j=1}^m\tilde{\gamma}_j|v_{j}(t)|\\
&\leq&
-\gamma V(t).
\end{eqnarray*}}
By $(H_{5})$--$(H_{7})$, we see that Condition $(iv)$ of Lemma \ref{lem52} holds. Hence, according to Lemma \ref{lem52}, there exists a unique uniformly asymptotically stable almost periodic solution $(x(t),y(t))$ of system $\eqref{e13}$, and $(x(t),y(t))\in\Omega$. The proof is complete.
\end{proof}

\section{Examples}
 \setcounter{equation}{0}
 \indent

In this section, we present two examples to illustrate the feasibility of our results obtained
in previous sections.

\begin{example} In system \eqref{e13}, let  $\mathbb{T}=\mathbb{R },\mu(t)=0, J=[0, +\infty)_{\mathbb{T}}, i,j,l,h=1,2$ and take coefficients as follows:
\[b_{1}(t)=9-|\cos\sqrt{2}t|,\,\,b_{2}(t)=8+|\sin t|,\]
\[r_{1}(t)=0.09-0.01|\sin\sqrt{2}t|,\,\,r_{2}(t)=0.07+0.02\cos^{2}t,\]
\[
(a_{il}(t))_{2\times 2}= \left(
\begin{array}{cccc}
2-0.1|\sin (\sqrt{2}t)| &0.2- 0.1|\cos (\sqrt{3}t)|\\
0.3-0.2|\sin t| & 2-0.2|\sin t|\\
\end{array}\right),\]
\[
(c_{ih}(t))_{2\times 2}= \left(
\begin{array}{cccc}
0.015+0.005\sin^{2}t& 0.02-0.01|\cos(\sqrt{3} t)|\\
0.01+0.01\cos^{2}t & 0.02-0.01\sin(\sqrt{5}t)\\
\end{array}\right),\]
\[(d_{jl}(t))_{2\times 2}= \left(
\begin{array}{cccc}
0.2-0.05|\cos(\sqrt{2} t)| & 0.3+0.1|\cos t|\\
0.2-0.01|\sin(\sqrt{3} t)|& 0.2-0.01|\cos(\sqrt{3} t)|\\
\end{array}\right),\]
\[(e_{jh}(t))_{2\times 2}= \left(
\begin{array}{cccc}
0.6-0.01\cos t & 0.003+0.002\sin t\\
0.004+0.002\sin t & 0.55-0.01\sin\sqrt{2} t\\
\end{array}\right),\]
\[\tau_{il}(t)=\xi_{jl}(t)=0.003-0.001\sin( 2\pi t),\,\,\delta_{ih}(t)=\eta_{jh}(t)=0.002-0.001\cos( 2\pi t),\]
\[\lambda_{ik}=\lambda_{jk}=e^{(0.04)^{\frac{1}{2^{k}}}}-1,\,\,t_k=k,\,\,i,j,l,h=1,2,\]
 then, by a sample calculation, we have
\[{b_{1}^U}={b_{2}^U}=9,\,\,{b_{1}^L}={b_{2}^L}=8,\,\,{r_{1}^U}={r_{2}^U}=0.09,\]
\[
(a_{il})_{2\times 2}^U= \left(
\begin{array}{cccc}
2 & 0.2\\
0.3 & 2\\
\end{array}\right),\,\,
(a_{il})_{2\times 2}^L= \left(
\begin{array}{cccc}
1.9 & 0.1\\
0.1 & 1.8\\
\end{array}\right),\]
\[
(c_{ih})_{2\times 2}^U= \left(
\begin{array}{cccc}
0.02 & 0.02\\
0.02 & 0.03\\
\end{array}\right),\,\,
(c_{ih})_{2\times 2}^L= \left(
\begin{array}{cccc}
0.015 & 0.01\\
0.01 & 0.01\\
\end{array}\right),\]
\[
(d_{jl})_{2\times 2}^U= \left(
\begin{array}{cccc}
0.2 & 0.4\\
0.2 & 0.2\\
\end{array}\right),\,\,
(d_{jl})_{2\times 2}^L= \left(
\begin{array}{cccc}
0.15 & 0.3\\
0.19 & 0.19\\
\end{array}\right),\]
\[
(e_{jh})_{2\times 2}^U= \left(
\begin{array}{cccc}
0.61 & 0.005\\
0.006 & 0.56\\
\end{array}\right),\,\,
(e_{jh})_{2\times 2}^L= \left(
\begin{array}{cccc}
0.59 & 0.001\\
0.002 & 0.54\\
\end{array}\right),\]
\[\tau^{+}=\xi^+=0.004,\,\,\tau^{-}=\xi^-=0.002,\,\,\tau^{\Delta}=\xi^{\Delta}=0.002,\]
\[\delta^{+}=\eta^+=0.003,\,\,\delta^{-}=\eta^-=0.001,\,\,\delta^{\Delta}=\eta^{\Delta}=0.002,\,\,r=0.9.\]
By calculating, we have
\[x_{1}^{\vee}\approx1.591,\,\,x_{2}^{\vee}\approx1.645,\,\,y_{1}^{\vee}\approx1.653,\,\,y_{2}^{\vee}\approx1.324,\]
\[x_{1}^{\wedge}\approx 0.979,\,\,x_{2}^{\wedge}\approx 0.896,\,\,y_{1}^{\wedge}\approx0.296,\,\,y_{2}^{\wedge}\approx0.198,\]
then
 \begin{eqnarray*}
 \gamma_{1}\approx2.408, \,\,\gamma_{2}\approx0.466,\,\,\tilde{\gamma}_{1}\approx0.502,\,\,\tilde{\gamma}_{2}\approx0.418,\,\,
\gamma=\min\{\gamma_{1},\gamma_{2},\tilde{\gamma}_{1},\tilde{\gamma}_{2}\}=0.418.
\end{eqnarray*}

Thus, conditions $(H_{1})$--$(H_{7})$ are satisfied. According to Theorems \ref{thm31} and \ref{thm41}, system \eqref{e13} is permanent and   has a unique almost periodic solution $(x(t),y(t))$ that is uniformly asymptotically stable.
\end{example}

\begin{example}In system \eqref{e13}, let  $\mathbb{T}=\mathbb{Z}, \mu(t)=1, J=[0, +\infty)_{\mathbb{T}}, i,j,l,h=1,2$ and take coefficients as follows:
\[b_{1}(t)=0.1,\,\,b_{2}(t)=0.09,\,\,r_{1}(t)=0.001|\cos(\sqrt{3}t)|,\,\,r_{2}(t)=0.001|\sin(\sqrt{2}t)|,\]
\[
(a_{il}(t))_{2\times 2}= \left(
\begin{array}{cccc}
0.096 & 0.002-0.001\sin\frac{\pi}{4}t\\
0.002+0.001\sin\frac{\pi}{3}t & 0.087\\
\end{array}\right),\]
\[
(c_{ih}(t))_{2\times 2}= \left(
\begin{array}{cccc}
0.001|\cos(\sqrt{5}t)| & 0.001|\sin\frac{\pi}{4}t|\\
0.001|\sin\frac{\pi}{3}t| & 0.001|\sin(\sqrt{5}t)|\\
\end{array}\right),\]
\[(d_{jl}(t))_{2\times 2}= \left(
\begin{array}{cccc}
0.05 & 0.03\\
0.04 & 0.03\\
\end{array}\right),\,\,(e_{jh}(t))_{2\times 2}= \left(
\begin{array}{cccc}
0.082 & 0.001\\
0.001 & 0.07\\
\end{array}\right),\]
\[\tau_{il}(t)=\xi_{jl}(t)=\frac{1+(-1)^{t}}{1000},\,\,\delta_{ih}(t)=\eta_{jh}(t)=\frac{2+(-1)^{t}}{1000},\]
\[\lambda_{ik}=\lambda_{jk}=e^{(0.0004)^{\frac{1}{2^{k}}}}-1,\,\,t_k=k,\,\,i,j,l,h=1,2,\]
 then
\[{b_{1}^U}={b_{1}^L}=0.1,\,\, {b_{2}^U}={b_{2}^L}=0.09,\,\,{r_{1}^U}=0.001,\,\,{r_{2}^U}=0.001,\]
\[
(a_{il})_{2\times 2}^U= \left(
\begin{array}{cccc}
0.096 & 0.003\\
0.003 & 0.087\\
\end{array}\right),\,\,
(a_{il})_{2\times 2}^L= \left(
\begin{array}{cccc}
0.096 & 0.001\\
0.001 & 0.087\\
\end{array}\right),\]
\[
(c_{ih})_{2\times 2}^U= \left(
\begin{array}{cccc}
0.001 & 0.001\\
0.001 & 0.001\\
\end{array}\right),\,\,
(d_{jl})_{2\times 2}^U=(d_{jl})_{2\times 2}^L= \left(
\begin{array}{cccc}
0.05 & 0.03\\
0.04 & 0.03\\
\end{array}\right),\]
\[
(e_{jh})_{2\times 2}^U=(e_{jh})_{2\times 2}^L= \left(
\begin{array}{cccc}
0.082 & 0.001\\
0.001 & 0.07\\
\end{array}\right),\]
\[\tau^{+}=\xi^+=0.002,\,\,\tau^{-}=\xi^-=0,\,\,\tau^{\Delta}=\xi^{\Delta}=0.001,\]
\[\delta^{+}=\eta^+=0.003,\,\,\delta^{-}=\eta^-=0.001,\,\,\delta^{\Delta}=\eta^{\Delta}=0.001,\,\,r=2.\]

By calculating, we have
\[x_{1}^{\vee}\approx 0.041,\,\,x_{2}^{\vee}\approx 0.034,\,\,y_{1}^{\vee}\approx0.012,\,\,y_{2}^{\vee}\approx0.038,\]
\[x_{1}^{\wedge}\approx 1.374,\,\,x_{2}^{\wedge}\approx 1.360,\,\,y_{1}^{\wedge}\approx2.724,\,\,y_{2}^{\wedge}\approx2.747,\]
then
\begin{eqnarray*}
\gamma_{1}\approx0.239, \,\,\gamma_{2}\approx0.244,\,\,\tilde{\gamma}_{1}\approx1.248,\,\,\tilde{\gamma}_{2}\approx1.093,\,\,
\gamma=\min\{\gamma_{1},\gamma_{2},\tilde{\gamma}_{1},\tilde{\gamma}_{2}\}=0.239.
\end{eqnarray*}
Thus, conditions $(H_{1})$--$(H_{7})$ are satisfied. According to Theorems \ref{thm31} and  \ref{thm41}, system \eqref{e13} is permanent and   has a unique almost periodic solution $(x(t),y(t))$ that is uniformly asymptotically stable.
\end{example}

\section{Conclusion}
\indent

In this paper, we first proposed two types of definitions of  piecewise continuous almost periodic functions  on time scales and indicated that this two types of almost periodic functions are the same. Then, we   establish some comparison theorems of dynamic equations with impulses and delays on time scales, and, as an application, we obtain some sufficient conditions for the permanence of \eqref{e13} by using the comparison theorems. Finally,
by Lyapunov method, we investigate
the existence and uniqueness of almost
periodic solutions of impulsive dynamic equations on time scales and as an application, we obtain some sufficient conditions for the existence and uniformly asymptotic stability of unique positive almost periodic solution of \eqref{e13}.
  Our results and methods of this paper can be used to study other types of population models with impulses and delays.

\end{document}